\titleformat{\section}[block]{\centering\Large\bfseries}{\thesection}{1em}{}
\titleformat{\subsection}[block]{\centering\large\bfseries}{\thesubsection}{1em}{}
\titleformat{\subsubsection}[block]{\centering\normalsize\bfseries}{\thesubsubsection}{1em}{}
\newcommand{\red}[1]{\textcolor{black}{#1}}
\newtheorem{theorem}{\textbf{Theorem}}[section]
\newtheorem{lemma}{\textbf{Lemma}}[section]
\newtheorem{proposition}{\textbf{Proposition}}[section]
\newtheorem{remark}{\textbf{Remark}}[section]
\providecommand{\keywords}[1]{\textbf{\textit{Keywords---}} #1}
\def\Pi{\mathbf{\psi}}
\def\R{{\mathbb R}}
\def\N{{\mathbb N}}
\def\T{{\mathbb  T}}
\title{\bf{Large-Time Behavior of Perturbations on the Rayleigh–Taylor Equilibria in the Boussinesq Equations}}
\author{
Song Jiang$^{a}$
\thanks{jiang@iapcm.ac.cn}
\quad\quad
Quan Wang$^{b}$
\thanks{Corresponding author:xihujunzi@scu.edu.cn }
\\ \footnotesize $^a$
 LCP, Institute of Applied Physics and Computational Mathematics,
\\\footnotesize Huayuan Road 6,
Beijing,100088, China
  \\ \footnotesize $^{b}$ College of Mathematics, Sichuan University,
  \footnotesize
 Chengdu, Sichuan, 610065,  China
}
\begin{document}
\maketitle
\begin{abstract}
We study the two‑dimensional viscous Boussinesq equations, which model stratified flows in a circular domain under the influence of a general gravitational potential $f$.
First, we show that the Boussinesq equations admit steady‑state solutions only in the form of hydrostatic equilibria,  $(\mathbf{u},\rho,p)=(0,\rho_s,p_s)$, where the pressure gradient satisfies
$\nabla p_s=-\rho_s\nabla f$. Moreover, the relation between $\rho_s$ and $f$ is constrained by
 $\left(\partial_{y}\rho_{s},-\partial_{x}\rho_{s}\right)\cdot\left(\partial_{x}f,\partial_{y}f\right)=0$, 
which allows us to write $\nabla\rho_{s}=h\left(x,y\right)\nabla f$ for some scalar function $h\left(x,y\right)$.
Second, we prove that any hydrostatic equilibrium $(0,\rho_s,p_s)$
 is linearly unstable if $h(x_0,y_0)>0$ at some point $(x,y)=(x_0,y_0)$.
This instability coincides with the classical Rayleigh–Taylor instability. Third, by employing a series of regularity estimates, we reveal that although the presence
of the Rayleigh-Taylor instability makes perturbations around the unstable equilibrium grow
exponentially in time, the system ultimately converges to a state of hydrostatic equilibrium.
The analysis is carried out for perturbations about an arbitrary hydrostatic equilibrium, covering both stable and unstable configurations. Finally, we derive a necessary and sufficient condition on the initial density perturbation under which the density converges to a profile of the form $-\gamma f+\beta$ with constants $\gamma,\beta>0$. This result underscores the system’s inherent tendency to settle into a hydrostatic state, even in the presence of Rayleigh–Taylor instability.
\\
\keywords{Boussinesq equations; Rayleigh–Taylor equilibria; Large time behavior.}
\end{abstract}

\newpage
\tableofcontents

\newpage
\section{Introduction}
Consider the two-dimensional viscous Boussinesq equations
   \begin{align}\label{main}
\begin{cases}
\frac{\partial \mathbf{u}}{\partial t}+ (\mathbf{u} \cdot \nabla )\mathbf{u} = \nu \Delta \mathbf{u}-
\frac{1}{\rho^*} \nabla P-\frac{\rho}{\rho^*}\nabla f,\quad \mathbf{x}\in \Omega,
 \\ \frac{\partial \rho}{\partial t}+(\mathbf{u}\cdot  \nabla)\rho
 =0, \quad \mathbf{x}\in \Omega,
\\ \nabla \cdot  \mathbf{u}=0, \quad \mathbf{x}\in \Omega,
\end{cases}
\end{align}
where $ \mathbf{u}=(u_1,u_2)$ is the velocity vector field,
 $\rho$ is the scalar density, $\rho^*$ is the constant density determined by
 a pressure $p^*$ and $f$ through the balance equation $\nabla p^*=-\rho^*\nabla f $,
$\nu$ is a positive constant which models viscous dissipation, and
 $f$ is a given potential function modeling gravity. It generally satisfies
the equation  $\Delta f=0$, and $\Omega$ is a doamin on
$\R^2$. 
The Boussinesq system can be used for modeling annular fluid systems,  such as planetary rings, accretion disks, the global atmosphere near the equator
and the fluid
between
two cylinders and homogeneous in the z-direction, which is derived from the inhomogeneous Navier-Stokes system
   \begin{align}\label{main-2}
\begin{cases}
\left(\frac{\rho}{\rho^*}+1\right)\frac{\partial \mathbf{u}}{\partial t}+ \left(\frac{\rho}{\rho^*}+1\right)(\mathbf{u} \cdot \nabla )\mathbf{u}
= \nu \Delta \mathbf{u}-  \frac{1}{\rho^*}\nabla P-\left(\frac{\rho}{\rho^*}+1\right)\nabla f,\quad \mathbf{x}\in \Omega,
 \\ \frac{\partial \rho}{\partial t}+(\mathbf{u}\cdot  \nabla)\rho  =0,\quad \mathbf{x}\in \Omega,
\\ \nabla \cdot  \mathbf{u}=0,\quad \mathbf{x}\in \Omega,
\end{cases}
\end{align}
by employing the Boussinesq approximation. That is, to derive \eqref{main}, one only needs to omit those terms containing $\rho/\rho^*$
except the one in the gravity term $-\rho\nabla f/\rho^*$. The Boussinesq system \eqref{main} can also be viewed as a coupled system of the continuity and momentum equations, with the coupling achieved solely through the gravity term 
$-\rho\nabla f/\rho^*$.
Please note that if the density is replaced by the temperature $\theta$, the gravity term $-\rho\nabla f/\rho^*$ in the Boussinesq system should be modified to the term $\theta\nabla f/\rho^*$, representing the buoyancy force. 

The system of Boussinesq equations is potentially relevant to the study of atmospheric and oceanographic dynamics,
as well as other astrophysical dynamics where rotation and stratification are predominant factors \cite{Majda-2003,Pedlosky1987, Smyth-2019}.
For instance, in the field of geophysics and fluid dynamics, the Boussinesq system is employed to model large-scale atmospheric and oceanic flows
that are responsible for phenomena such as cold fronts and jet streams  \cite{Pedlosky1987},  cloud rolls \cite{Wang2019}, large scale
circulation \cite{Ma20102,wang20192} and thermolhaline circulation
\cite{Ma2010}. In classical fluid mechanics, the system \eqref{main} with diffusion
is utilized for understanding the convection pattern in  buoyancy-driven flows \cite{Dijkstra2013,Majda-2002,Sengul2013}. It characterizes the motion of an incompressible, inhomogeneous, viscous fluid under the influence of gravitational forces, which can be instrumental in studying convective instability \cite{Smyth-2019}.

Beyond its intrinsic physical significance, the Boussinesq system is recognized for preserving crucial characteristics of the Euler and Navier-Stokes equations, such as the vortex stretching mechanism, where the increase in vorticity is contingent upon the temporal accumulation of $\nabla \mathbf{u}$, akin to the effect in three-dimensional incompressible flows \cite{Majda-2002}. Adding to its rich dynamics, when the viscosity $\nu=0$, the model aligns with the three-dimensional Euler equations for axisymmetric swirling flows for $r>0$ \cite{Katsaouni-2024,Majda-2002}, underscoring its capacity to bridge analyses of fluid motion across dimensions. Given these profound connections to fundamental fluid models, the questions of global well-posedness and potential finite-time blow up of its classical solutions have attracted considerable research interest. Regarding these issues, Chae and Nam \cite{Chae-Nam1997} proved local existence and uniqueness of smooth solutions for $\nu=0$ on $\mathbb{R}^2$ and derived a blow-up criterion, while Hou and Li \cite{Hou-li2005} demonstrated global well-posedness for the Cauchy problem with constant gravity $\nabla f=(0,g)^T$ and showed that solutions with initial data in $H^m(\mathbb{R}^2)$, $m\geq 3$, do not develop finite-time singularities. The question of global regularity versus singularity with $\nu=0$ remains a significant open problem in mathematical fluid mechanics, with further studies available in \cite{Adhikari-2014,Chae-2014,Weinan-1994,Hmidi-2010,Lai-2011,Lorca-1999}.
 
Before detailing the main objectives of this article, we review key mathematical results on the long‑time behavior near stable steady‑state solutions of the Boussinesq system \eqref{main}. The steady‑state solutions with zero velocity, called hydrostatic equilibria, are of the form $(\mathbf{0},\rho_s,p_s)$, where $\rho_s$ and $p_s$ satisfy the hydrostatic balance $\nabla p_s = -\rho_s \nabla f$.
In the case of constant vertical gravity $\nabla f = (0,g)$, the (stable) linear hydrostatic equilibrium
\[
(\mathbf{0},\rho_s,p_s) = \bigl(\mathbf{0},\, -ay + b,\; \tfrac{ag}{2}y^2 - gby + c\bigr),
\]
with constants $a>0$, $b$, $c$, describes a motionless fluid in which density decreases linearly with height (a lighter fluid overlying a denser one). Intuitively, this configuration is expected to be both linearly and nonlinearly stable. A substantial body of work has studied the dynamics of the constant‑gravity Boussinesq system \eqref{main} around this linear equilibrium. On a general bounded domain $\Omega$, 
Doering et al. \cite{Charles-R2018} proved that the velocity perturbation decays to zero in $H^1(\Omega)$, which implies that the perturbation equations approach the hydrostatic balance $\nabla p_s = -\rho_s \nabla (gy)$ in the $H^{-1}$ norm. Subsequent studies have focused on more specific geometries. Castro et al. \cite{Castro2019} established asymptotic stability for a special class of perturbations on the two‑dimensional strip $\Omega = \mathbb{T} \times (0,1)$, where the usual viscous dissipation $\nu \Delta \mathbf{u}$ is replaced by $-\mathbf{u}$. On the two‑dimensional periodic domain $\Omega = \mathbb{T}^2$, Tao et al. \cite{Lizheng-2020} obtained an explicit algebraic convergence rate of perturbations to the linear hydrostatic equilibrium, under the assumption that the perturbations satisfy suitable mean‑zero conditions.
Employing the constant‑gravity Boussinesq system with only vertical dissipation, Lai et al.\cite{Lai2021} established global stability of the linear hydrostatic equilibrium and obtained an explicit algebraic decay rate for perturbations on the whole plane $\mathbb{R}^2$.
Adhikari et al.~\cite{Adhikari2022} studied a model with horizontal dissipation and vertical thermal diffusion, establishing global stability and explicit algebraic decay rates on the unbounded strip $\Omega = \mathbb{T} \times \mathbb{R}$.  Extending this work, Jang and Kim~\cite{Jang-2023} derived the first sharp decay estimates for perturbations of the linear hydrostatic equilibrium on the two‑dimensional periodic domain $\mathbb{T}^2$, under general dissipation and thermal diffusion.
For additional results concerning the global stability of the linear hydrostatic equilibrium in the Boussinesq system \eqref{main}, we refer to \cite{Dong-2022, Ji-2022, Kang-2024, BCN} and the references therein.

For unbounded domains, the system \eqref{main} also permits steady‑state solutions with non‑zero velocity. The stability of stably stratified steady states with non‑vanishing velocity—particularly Couette flow—has attracted considerable mathematical interest due to its wide applicability in shear‑driven phenomena. Here we mention some of the research findings in this area for readers' reference:
1) Linear inviscid damping for Couette flow: Yang and Lin
\cite{Yang-lin2018} demonstrated linear inviscid damping for Couette flow within the 2D Boussinesq system with $\nu=0$,$\nabla f=(0,g)^T$ and on $\Omega=\T\times \R$; 2) Stability of Couette flow in the Boussinesq system:
Masmoudi et al. \cite{Nader-zhao-2022} established the stability of Couette flow for the 2D Boussinesq system with $\nabla f=(0,g)^T$ and on $\Omega=\T\times \R$ and for the initial perturbation in Gevrey-$1/s$ with $1/3<s\leq 1$; 3) Existence and stability of traveling solutions:
Zillinger \cite{Zillinger-2023} proved the stability and existence of time-dependent traveling wave solutions in any neighborhood of Couette flow
 and hydrostatic balance (with respect to local norms) for the 2D Boussinesq system with $\nabla f=(0,g)^T$.

In contrast to the extensively studied stably stratified case (density decreasing with height), rigorous mathematical results on the Rayleigh–Taylor instability—where a denser fluid overlies a lighter one (density increasing with height)—are relatively scarce. In 2001, Cherfils–Clérouin et al. \cite{CCLR-2001} considered the full equations \eqref{main-2} with $\nabla f = (0,g)^\top$ and $\nu\equiv 0$ on $\Omega = \mathbb{T} \times \mathbb{R}$. They rigorously established the linear instability of a general hydrostatic equilibrium $(\mathbf{0},\rho_s,p_s)$ under the condition $\frac{d\rho_s}{dy}\big|_{y=y_0} > 0$.
Subsequent advances include: Guo and Huang \cite{huang2003} extended this linear instability to nonlinear instability in the Hadamard sense and for the same setting; For the inviscid 2D Boussinesq system on $\Omega = \mathbb{T} \times \mathbb{R}$, Bedrossian et al. \cite{Bedrossian-2023} rigorously proved a shear‑buoyancy instability under the classical Miles–Howard condition on the Richardson number; In the viscous case ($\nu > 0$) on a bounded domain, Jiang and Jiang \cite{Jiang2014-ad} established nonlinear instability
in the Hadamard sense for profiles satisfying $\frac{d\rho_s}{dy}\big|_{y=y_0} > 0$
and nonlinear stability for profiles satisfying $\frac{d\rho_s}{dy}\big|_{y=y_0} \equiv h_0<0$,
using the full system \eqref{main-2} with $\nabla f = (0,g)^\top$.

To date, To date, nonlinear instability results 
in the Hadamard sense describe only short‑time growth; 
no rigorous mathematical theory exists for the long‑time dynamics near an unstable hydrostatic equilibrium in which denser fluid overlies lighter fluid. 
Nevertheless, if the two‑dimensional Boussinesq system \eqref{main}---or the full equations \eqref{main-2}---admits only hydrostatic equilibria as steady states (thereby precluding convergence to stable shear flows or other non‑trivial stable steady states), 
then even for a profile $\rho_s$ satisfying $\frac{d\rho_s}{dy}\big|_{y=y_0} > 0$, which is nonlinearly unstable in the Hadamard sense, 
the system should eventually relax to some hydrostatic equilibrium. 
This intuitive picture is supported by numerical simulations of Lee et al.~\cite{Lee2011}.
Motivated by the nonlinear instability analyses \cite{huang2003, Jiang2014-ad} and the numerical evidence \cite{Lee2011}, 
the present work aims to advance the theory of nonlinear instability and long‑time dynamics for system \eqref{main} in several key directions:
\begin{enumerate}
   \item [\rm{1)}] Under physically appropriate boundary conditions, we prove that the two‑dimensional Boussinesq system \eqref{main} admits only hydrostatic equilibria as steady‑state solutions. Specifically, every steady state is of the form $(\mathbf{u}, \rho, p) = (\mathbf{0}, \rho_s, p_s)$, where the density profile $\rho_s$ and the gravitational potential $f$ are related by $
\nabla \rho_s = h(x,y) \, \nabla f$ for some function $h(x,y)$.
Thus, convergence to shear flows or other non‑zero‑velocity steady states is ruled out.
\item [\rm{2)}] 
Our second objective is to establish both linear and nonlinear instability of hydrostatic equilibria $(\mathbf{0},\rho_s,p_s)$ under the condition $h(x_0,y_0)> 0$. This would constitute a substantial generalization of the earlier nonlinear instability results \cite{huang2003, Jiang2014-ad}, which were restricted to the case of a uniform vertical gravitational potential $f = gy$. In that special setting, the relation $\nabla \rho_s = h(x,y) \nabla f$ reduces to
\[
\frac{d\rho_s}{dy} = gh(x,y) \equiv gh(y),
\]
and the instability condition $h(x_0,y_0)> 0$ becomes $\frac{d\rho_s}{dy}\big|_{y=y_0} > 0$.
\item[\rm{3)}] We investigate the long‑time behavior of perturbations about arbitrary hydrostatic equilibria.
Specifically, we prove that the velocity perturbation converges to zero in the $H^1(\Omega)$‑norm as $t \to \infty$, irrespective of whether the equilibrium is stably stratified (a lighter fluid above a denser one) 
or Rayleigh–Taylor unstable (a denser fluid above a lighter one).
\item[\rm{4)}] We characterize the convergence of the density profile for solutions to \eqref{main-2} with $\mathbf{g} = \nabla f \neq \mathbf{0}$. Specifically, we derive necessary and sufficient conditions under which the density $\rho(t)$  approaches a linear steady‑state profile of the form $\rho_s = -\gamma f + \beta$, where $\gamma, \beta > 0$.
\end{enumerate}

Eigenvalue analysis provides an effective tool for studying the long‑time behavior of perturbations around stably stratified linear equilibria (where density decreases with height). 
This method, however, fails for analyzing the long‑time dynamics around Rayleigh–Taylor equilibria (a denser fluid overlying a lighter one). To investigate the large‑time dynamics of perturbations about such unstable equilibria in the two‑dimensional Boussinesq system \eqref{main} with a general non‑zero force field $\nabla f$, 
one may instead employ energy methods to establish decay of the velocity perturbation $\mathbf{u}$ in $H^1(\Omega)$. For unstable profiles, however, there exist no positive constants $d_1, d_2$ such that
\[
-\frac{d_1}{\rho^*}\int_{\Omega} \rho \nabla f \cdot \mathbf{u} \, dx \, dy
- d_1 \int_{\Omega} h(x,y) \, \rho \mathbf{u} \cdot \nabla f \, dx \, dy = 0.
\]
Consequently, the cancellation-of-cross-terms technique employed in previous works (e.g., \cite{Charles-R2018,Jiang2014-ad}) is no longer applicable for proving uniform $H^1$ bounds and decay. This fundamental obstruction thus necessitates the development of new energy‑estimate techniques tailored to the unstable regime.

In order to prove that the two‑dimensional Boussinesq system \eqref{main} on a bounded domain $\Omega$—under appropriate boundary conditions—admits only hydrostatic equilibria as steady‑state solutions, we work on a circular domain. For definiteness, we impose a free‑slip condition on the outer boundary and a Navier‑slip condition on the inner boundary (if present). This setup simplifies the presentation while preserving the essential analytical structure; all arguments in this article extend naturally to other smooth bounded domains with analogous boundary conditions.


\subsection{The Steady States}
In the real world, fluid flows typically occur within confined domains and are subject to boundary constraints, giving rise to boundary value problems. In the field of geophysical and astrophysical fluid dynamics, when examining fluid motions modeled by the Boussinesq system within a circular domain, it is more physically accurate to replace the gravity term $-\rho /\rho^*(0,g)^T$
by $-\rho\nabla f/\rho^*$. Here, $f$ is a potential function that describes the gravitational force. This modification better captures the effects of spatially varying gravitational forces or buoyancy effects in such geometries.

 We consider the steady-state solutions of the Boussinesq system \eqref{main} in a circular region on
$\R^2$ shown in \autoref{region},  which are determined by the following system:
    \begin{align}\label{main-s}
\begin{cases}
 (\mathbf{u} \cdot \nabla )\mathbf{u} = \nu \Delta \mathbf{u}-
\frac{1}{\rho^*} \nabla P-\frac{\rho}{\rho^*}\nabla f,\quad \mathbf{x}\in \Omega,\\
(\mathbf{u}\cdot  \nabla)\rho
 =0, \quad \mathbf{x}\in \Omega,
\\ \nabla \cdot  \mathbf{u}=0, \quad \mathbf{x}\in \Omega,
\end{cases}
\end{align}
 which is subject to the following boundary conditions
  \begin{align}\label{cond-1}
  &\mathbf{u} \cdot   \mathbf{n}|_{x^2+y^2=a^2}=0,\quad
  \mathbf{u} \cdot   \mathbf{n}|_{x^2+y^2=b^2}=\nabla \times   \mathbf{u}|_{x^2+y^2=b^2}=0,\\ \label{cond-2}
   &\left[\left(-\frac{p}{\rho^*}\mathbf{I} +\nu\left (\nabla \mathbf{u}+\left(\nabla \mathbf{u}\right)^{Tr} \right)\right)\cdot\mathbf{n}\right]
   \cdot   \mathbf{\tau}|_{x^2+y^2=a^2}
  +\alpha  \mathbf{u}  \cdot   \mathbf{\tau}|_{x^2+y^2=a^2}=0,
  \end{align}
  where $\mathbf{n}$ is the outward normal vector to the boundary $\partial\Omega$, 
  $\mathbf{\tau}$ is the tangent vector, and $\alpha $ is a given function.
 Please note that \eqref{cond-1} represents
 the stress-free boundary condition, while \eqref{cond-2} is the
 Navier-slip boundary condition which is a chosen interpolation between stress-free and no-slip boundary conditions.  It is more realistic than stress-free boundary conditions when boundaries are rough \cite{2007Handbook}. The conditions \eqref{cond-1}-\eqref{cond-2} are physically valid when the boundary $x^2+y^2=a^2$ is rough and 
 the outer boundary $x^2+y^2=b^2$ is free from any applied loads or constraints.
 
 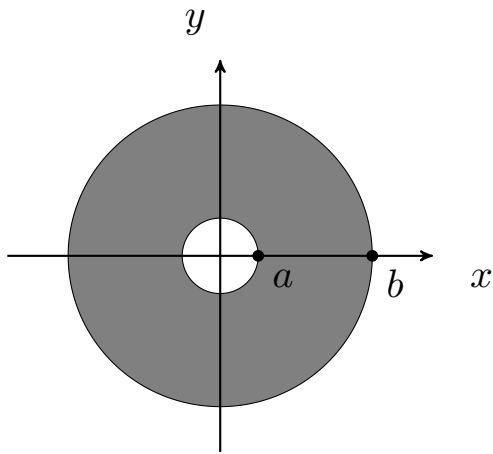
\begin{figure}[tbh]
\centering 
        \begin{tikzpicture}[>=stealth',xscale=1,yscale=1,every node/.style={scale=1.5}]
\fill[domain = -2:360,gray][samples = 200] plot({2*cos(\x)}, {2*sin(\x)});
        \fill[domain = -2:360,white][samples = 200] plot({1/2*cos(\x)}, {1/2*sin(\x)});
\draw [thick,->] (0,-2.6) -- (0,2.6) ;
\draw [thick,->] (-2.8,0) -- (2.8,0) ;
\node [below right] at (3.1,0) {$x$};
\node [below right] at (1/2,0) {$a$};
\node [below right] at (2,0) {$b$};
\node [left] at (0,3.1) {$y$};
\draw[domain = -2:360][samples = 200] plot({2*cos(\x)}, {2*sin(\x)});
\draw[domain = -2:360][samples = 200] plot({1/2*cos(\x)}, {1/2*sin(\x)});
\draw[fill,black] (0.5,0) circle [radius=2pt];
\draw[fill,black] (2,0) circle [radius=2pt];
\end{tikzpicture}
\caption{Schematic representation of the domain $\Omega$}\label{region} 
\end{figure}

Under the boundary conditions \eqref{cond-1}-\eqref{cond-2}, we indeed can establish that
hydrostatic equilibria are the only stationary solutions of the equations.
\begin{lemma}\label{steady-lemma}
Let
$(\mathbf{u}, \rho)$ be any classical solution of the equations \eqref{main-s} subject to the boundary conditions \eqref{cond-1}-\eqref{cond-2},
if $\nu a^{-1}+\alpha\geq0$, then the following results are established.
\begin{align}
\mathbf{u}=0,\quad  \rho=\rho_s(x,y),
\end{align}
and $\rho_s(x,y)$ solves
\begin{align}\label{rhos}
\quad \partial_y\rho_s\partial_xf-
\partial_x\rho_s\partial_yf=0.
\end{align}
\end{lemma}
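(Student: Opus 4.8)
The plan is to run a single energy estimate: test the momentum equation in \eqref{main-s} against $\mathbf{u}$ and integrate over $\Omega$. First I would dispatch the routine terms. Since $\nabla\cdot\mathbf{u}=0$, the convective term integrates to $\int_\Omega(\mathbf{u}\cdot\nabla)\mathbf{u}\cdot\mathbf{u}\,dx\,dy=\tfrac12\int_{\partial\Omega}|\mathbf{u}|^2(\mathbf{u}\cdot\mathbf{n})\,dS=0$, and the pressure term integrates to $-\tfrac{1}{\rho^*}\int_\Omega\nabla P\cdot\mathbf{u}\,dx\,dy=-\tfrac{1}{\rho^*}\int_{\partial\Omega}P(\mathbf{u}\cdot\mathbf{n})\,dS=0$; both vanish because $\mathbf{u}\cdot\mathbf{n}=0$ on $x^2+y^2=a^2$ and on $x^2+y^2=b^2$.

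The decisive observation concerns the gravity term $-\tfrac{1}{\rho^*}\int_\Omega\rho\,\nabla f\cdot\mathbf{u}\,dx\,dy$, which a priori carries no sign. Combining the density equation $(\mathbf{u}\cdot\nabla)\rho=0$ with incompressibility gives $\nabla\cdot(\rho\mathbf{u})=\rho\,\nabla\cdot\mathbf{u}+\mathbf{u}\cdot\nabla\rho=0$, so $\rho\mathbf{u}$ is divergence free. Integrating by parts against $\nabla f$ then yields $\int_\Omega\rho\mathbf{u}\cdot\nabla f\,dx\,dy=-\int_\Omega f\,\nabla\cdot(\rho\mathbf{u})\,dx\,dy+\int_{\partial\Omega}f\rho(\mathbf{u}\cdot\mathbf{n})\,dS=0$, again by $\mathbf{u}\cdot\mathbf{n}=0$. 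Thus the gravity term drops out completely. For the viscous term I would integrate by parts as $\nu\int_\Omega\Delta\mathbf{u}\cdot\mathbf{u}\,dx\,dy=-\nu\int_\Omega|\nabla\mathbf{u}|^2\,dx\,dy+\nu\int_{\partial\Omega}(\partial_{\mathbf{n}}\mathbf{u})\cdot\mathbf{u}\,dS$, and then rewrite the boundary integrand using the fact that $\mathbf{u}=(\mathbf{u}\cdot\mathbf{\tau})\mathbf{\tau}$ on each circle. The identity $(\partial_{\mathbf{n}}\mathbf{u})\cdot\mathbf{\tau}=2(D(\mathbf{u})\mathbf{n})\cdot\mathbf{\tau}+\kappa\,(\mathbf{u}\cdot\mathbf{\tau})$, where $D(\mathbf{u})$ is the rate-of-strain tensor and $\kappa$ the signed boundary curvature, lets me feed in the boundary data: on the inner circle the Navier-slip condition \eqref{cond-2} supplies $2\nu(D(\mathbf{u})\mathbf{n})\cdot\mathbf{\tau}=-\alpha(\mathbf{u}\cdot\mathbf{\tau})$ together with $\kappa=-a^{-1}$, giving the contribution $-\int_{r=a}(\alpha+\nu a^{-1})(\mathbf{u}\cdot\mathbf{\tau})^2\,dS$; on the outer circle $\nabla\times\mathbf{u}=0$ with $u_r=0$ gives $(\partial_{\mathbf{n}}\mathbf{u})\cdot\mathbf{\tau}=-b^{-1}(\mathbf{u}\cdot\mathbf{\tau})$, giving $-\nu b^{-1}\int_{r=b}(\mathbf{u}\cdot\mathbf{\tau})^2\,dS$. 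I expect to carry out this step in polar coordinates, where the relation $2D_{r\theta}=\omega-2u_\theta/r$ on any circle with $u_r=0$ makes both reductions transparent; pinning down the exact constant $\nu a^{-1}+\alpha$ is the one place that demands care.

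Assembling the pieces, the energy identity collapses to
\[
\nu\int_\Omega|\nabla\mathbf{u}|^2\,dx\,dy+\int_{r=a}(\alpha+\nu a^{-1})(\mathbf{u}\cdot\mathbf{\tau})^2\,dS+\nu b^{-1}\int_{r=b}(\mathbf{u}\cdot\mathbf{\tau})^2\,dS=0 .
\]
Under the hypothesis $\nu a^{-1}+\alpha\ge0$ every summand is nonnegative, so each must vanish; in particular $\nabla\mathbf{u}\equiv0$, hence $\mathbf{u}$ is constant, and a constant field satisfying $\mathbf{u}\cdot\mathbf{n}=0$ on a circle is forced to be $\mathbf{0}$. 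Finally, substituting $\mathbf{u}=\mathbf{0}$ into the momentum equation leaves $\nabla P=-\rho\nabla f$; taking the two-dimensional curl and using $\nabla\times\nabla P=0$ yields $\partial_y\rho\,\partial_xf-\partial_x\rho\,\partial_yf=0$, which upon writing $\rho=\rho_s(x,y)$ is exactly \eqref{rhos}.

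The main obstacle throughout is the gravity term: the divergence-free structure of $\rho\mathbf{u}$ is what eliminates it cleanly, and without this cancellation no sign-definite energy identity would be available. The secondary technical point is the boundary bookkeeping on the inner circle, where the curvature contribution $\nu a^{-1}$ combines with the Navier-slip coefficient $\alpha$ to produce precisely the stated threshold.
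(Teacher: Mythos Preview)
Your proof is correct and follows essentially the same approach as the paper: the same energy identity, the same elimination of the gravity term via the transport equation and $\mathbf{u}\cdot\mathbf{n}|_{\partial\Omega}=0$, and the same boundary accounting for the viscous term (which the paper packages into \autoref{lemma-grad-1-1} rather than doing inline). Your version is in fact slightly more complete, since you spell out how $\nabla\mathbf{u}=0$ forces $\mathbf{u}=\mathbf{0}$ and then derive \eqref{rhos} by taking the curl of $\nabla P=-\rho\nabla f$, both of which the paper leaves implicit.
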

 \begin{proof}
Using $ \mathbf{u}\cdot  \mathbf{n}|_{\partial\Omega}=0$, the second and third equations of \eqref{main-s}, we get
 \begin{align*}
  \begin{aligned}
 0&=
 \int_{\Omega}\left( (\mathbf{u}\cdot  \nabla)\rho\right)
 f\,dx\,dy
 =
 - \int_{\Omega}\rho
 \nabla f\cdot \mathbf{u}
 \,dx\,dy.
  \end{aligned}
 \end{align*}
   The boundary conditions and the
divergence-free condition allow us to remove the terms
  \[
    2 \int_{\Omega}\left( (\mathbf{u} \cdot \nabla )\mathbf{u}
 \right)\cdot\mathbf{u}\,dx\,dy=-
  \int_{\partial\Omega}\abs{\mathbf{u}}^2
  \mathbf{u}\cdot\mathbf{n}\,ds
 =0.
  \]
By leveraging \autoref{lemma-grad-1-1}, we multiply the first two equations in \eqref{main-s} by the vector field $\mathbf{u}$ in the $L^2$
  inner product space, yielding
  \begin{align}\label{s-proof-2}
    \begin{aligned}
  & \rho^*\int_{\Omega}\left( (\mathbf{u} \cdot \nabla )\mathbf{u}
 \right)\cdot\mathbf{u}\,dx\,dy-\mu
    \int_{\Omega}\Delta \mathbf{u}\cdot\mathbf{u}\,dx\,dy=-
   \int_{\Omega}\rho
\nabla f\cdot \mathbf{u}
 \,dx\,dy\Rightarrow \\
&\mu
\int_{\Omega}\abs{\nabla\mathbf{u}}^2\,dx\,dy
   \red{ + \frac{\mu}{b}\int_{\partial B(0,b)}\left(u_{\mathbf{\tau}}\right)^2 \,ds
}  +\int_{\partial B(0,a)}\left(\mu a^{-1}+ \rho^*\alpha\right)\left(u_{\mathbf{\tau}}\right)^2 \,ds
\\&=-
   \int_{\Omega}\rho
\nabla f\cdot \mathbf{u}
 \,dx\,dy=0.
   \end{aligned}
  \end{align}
  Then, due to the condition $ \mathbf{u}\cdot\mathbf{n} |_{x^2+y^2=b^2}=0$, one can infer from \eqref{s-proof-2} that
   $\mathbf{u}=0$.
 \end{proof}
\begin{remark}
 For any $\gamma,\beta\in \R$,
$\left(\mathbf{u},\rho,p\right)=\left(\mathbf{0},-\gamma f(x,y)+\beta,\frac{1}{2}\left(\gamma f(x,y)-\beta\right)^2\right)$ is a family of steady-state solutions of
the evolution system \eqref{main} .
\end{remark}
\begin{remark}
The relation \eqref{rhos} says that the vector fields $\nabla\rho_s$ and $\nabla f$
are pointwise parallel. Hence, there exists a scale function $h(x,y)$ such that 
$\nabla\rho_s=h(x,y)\nabla f$.
\end{remark}
\begin{remark}
If $f=g\ln\sqrt{x^2+y^2}+c$, we have \[
 -\rho\nabla f=-\rho g\left(\frac{x}{x^2+y^2},\frac{y}{x^2+y^2}\right)^T,
 \]
 which is the gravity in the negative of $r$-direction. For any function
$\rho_s(x,y)=\tilde{\rho}(\sqrt{x^2+y^2})$ where $\tilde{\rho}(s)$
is a function defined on $[a,b]$, one can see that $(\rho_s,f)$ solves \eqref{rhos}.\end{remark}

     \subsection{Instability and open problem}
     
Having established that hydrostatic equilibria are the only admissible steady states under the given boundary conditions \eqref{cond-1}-\eqref{cond-2}, a natural subsequent question arises concerning their instability. In particular, one must distinguish between configurations that remain stable under small perturbations and those that exhibit instability, potentially leading to complex transient dynamics. The following subsection addresses this issue by presenting rigorous linear and nonlinear instability criteria for hydrostatic profiles that satisfy a density-gradient condition.

  \begin{theorem}\label{theorem-3}
Suppose $h(x,y),f(x,y) \in C^{2}(\overline{\Omega})$ and $\Delta f=0$. If the profile $\rho_s \in C^{1}(\overline{\Omega}) $ satisfies
$\nabla \rho_s =h(x,y) \nabla f$ with $h(x_0,y_0) >0$ at $(x_0,y_0)\in\Omega$, then the steady-state $(\mathbf{0}, \rho_s)$
of  the equations \eqref{main} subject to the  boundary conditions \eqref{cond-1}-\eqref{cond-2} is linearly unstable. If there exists a constant $h_0$ such that
$h(x,y) < h_0<0$, then the steady-state $(\mathbf{0}, \rho_s)$
is linearly stable.
\end{theorem}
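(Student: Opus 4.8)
The plan is to separate the two regimes by a normal-mode (spectral) analysis of the system linearized about the equilibrium, treating the sign of the buoyancy quadratic form as the decisive quantity. First I would linearize \eqref{main} about $(\mathbf{0},\rho_s,p_s)$: writing $\mathbf{u}=\mathbf{v}$, $\rho=\rho_s+\sigma$, $p=p_s+q$ and discarding quadratic terms, the hydrostatic balance $\nabla p_s=-\rho_s\nabla f$ cancels the steady part of the force and, using $\nabla\rho_s=h\nabla f$ from \autoref{steady-lemma}, yields
\begin{align}\label{lin-plan}
\begin{cases}
\partial_t\mathbf{v}=\nu\Delta\mathbf{v}-\frac{1}{\rho^*}\nabla q-\frac{\sigma}{\rho^*}\nabla f,\\
\partial_t\sigma=-h(x,y)\,\mathbf{v}\cdot\nabla f,\\
\nabla\cdot\mathbf{v}=0,
\end{cases}
\end{align}
with the boundary conditions inherited from \eqref{cond-1}--\eqref{cond-2}. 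Testing the momentum equation against $\mathbf{v}$ and integrating by parts exactly as in the proof of \autoref{steady-lemma} kills the pressure term and produces a dissipation $D(\mathbf{v})\ge0$, consisting of $\nu\int_\Omega|\nabla\mathbf{v}|^2$ together with boundary integrals that are nonnegative precisely because $\nu a^{-1}+\alpha\ge0$ (cf. \eqref{s-proof-2}).

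For the stable regime $h<h_0<0$, I would introduce the modulated energy
\[
E(t)=\tfrac12\int_\Omega|\mathbf{v}|^2\,dx\,dy-\tfrac{1}{2\rho^*}\int_\Omega\frac{\sigma^2}{h}\,dx\,dy,
\]
which is nonnegative since $-1/h>0$. Differentiating along \eqref{lin-plan} and noting that $h$ is time-independent, the two buoyancy cross-terms $\pm\frac{1}{\rho^*}\int_\Omega\sigma\,\mathbf{v}\cdot\nabla f$ cancel identically, leaving $\frac{dE}{dt}=-D(\mathbf{v})\le0$. Hence $E(t)\le E(0)$ controls $\|\mathbf{v}(t)\|_{L^2}$ and $\|\sigma(t)/\sqrt{-h}\|_{L^2}$ uniformly in time, which is linear stability; integrating the identity additionally gives $\int_0^\infty D(\mathbf{v})\,dt<\infty$, from which decay of $\mathbf{v}$ follows. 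This is the exact mirror of the cross-term cancellation that drove \autoref{steady-lemma}.

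For the unstable regime I would construct a genuine growing mode $(\mathbf{v},\sigma,q)=e^{\lambda t}(\mathbf{w},\varsigma,r)$ with $\lambda>0$. Eliminating $\varsigma=-\frac{h}{\lambda}\,\mathbf{w}\cdot\nabla f$ from the density equation, projecting the momentum equation onto divergence-free fields, and pairing with $\mathbf{w}$ reduces the eigenvalue problem to the self-consistency relation $\lambda=\Lambda(\lambda)$, where
\[
\Lambda(\lambda):=\sup_{\mathbf{w}\in\mathcal{H},\,\|\mathbf{w}\|_{L^2}=1}\Big\{\tfrac{1}{\lambda\rho^*}\int_\Omega h\,(\mathbf{w}\cdot\nabla f)^2\,dx\,dy-D(\mathbf{w})\Big\}
\]
and $\mathcal{H}$ is the space of divergence-free fields obeying \eqref{cond-1}--\eqref{cond-2}. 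The key input is that $h(x_0,y_0)>0$ lets me build a test field supported in a small neighborhood of $(x_0,y_0)$ on which $h>0$ and (after shrinking, by harmonicity of $f$, unless $f$ is constant) $\nabla f\neq0$, so that $\int_\Omega h(\mathbf{w}\cdot\nabla f)^2>0$. This forces $\Lambda(\lambda)\to+\infty$ as $\lambda\to0^+$, while $\Lambda(\lambda)\le0<\lambda$ for large $\lambda$ since the buoyancy term vanishes and $D\ge0$. As $\Lambda$ is continuous and strictly decreasing, a unique fixed point $\lambda^*>0$ exists, and the direct method (using the compact embedding $H^1(\Omega)\hookrightarrow L^2(\Omega)$) produces a maximizer $\mathbf{w}^*$, giving the exponentially growing solution $e^{\lambda^* t}(\mathbf{w}^*,\varsigma^*,r^*)$ and hence linear instability.

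The main obstacle is the instability half: making the variational fixed-point construction fully rigorous, namely establishing existence and elliptic ($H^2$) regularity of the maximizer $\mathbf{w}^*$ so that it is a true eigenfunction, handling the Leray projection under the mixed free-slip/Navier-slip boundary conditions so that the recovered pressure $r$ is consistent, and controlling the possibly degenerate set where $\nabla f=0$ when choosing the test field. The stability half, by contrast, is essentially a one-line energy cancellation once the modulated energy $E$ is chosen correctly.
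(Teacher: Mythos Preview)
Your proposal is correct and follows essentially the same route as the paper: the paper also eliminates the density, sets up a modified variational problem (their $-\lambda^2=\alpha(s)=\inf_{\mathbf{u}\in\mathcal A}\{s\nu E_1(\mathbf{u})-\tfrac{1}{\rho^*}E_2(\mathbf{u})\}$ is exactly your $\lambda=\Lambda(\lambda)$ after multiplying through by $s$), proves monotonicity/continuity of $\alpha$, and finds the fixed point by the intermediate value theorem, with existence and $H^2$-regularity of the maximizer via the direct method and a Stokes estimate. For the stable half the paper gives the spectral version of your energy identity---testing the eigenvalue problem by $(\mathbf{u},\rho/(-h))$ yields $\lambda(\rho^*\|\mathbf{u}\|^2+\|\rho/\sqrt{-h}\|^2)=-D(\mathbf{u})<0$---which is the same cancellation you used in your Lyapunov functional $E(t)$.
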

    \begin{remark}
    The proof of  \autoref{theorem-3} is presented in Section 3.1.
     \end{remark}
     
    \begin{remark}
 Based on \autoref{theorem-3},
 for any fixed number $\gamma ,\beta \in \R$,
the steady-state $(\mathbf{0},\rho_s)=(\mathbf{0}, -\gamma f(x,y)+\beta)$ is linearly stable if $\gamma>0$,
 while it is linearly unstable if $\gamma<0$. \autoref{theorem-4} gives more precise description on the stability of
 $(\mathbf{0}, -\gamma f(x,y)+\beta)$ with $\gamma >0$.
  \end{remark}
 \begin{remark}
 The linear instability
  described in \autoref{theorem-3}, where $\rho_s$ satisfying
$\nabla \rho_s =h(x,y) \nabla f$ with $h(x_0,y_0) >0$ at $(x_0,y_0)\in\Omega$,
is referred to as the Rayleigh-Taylor instability. This extends the classical case of 
$ \nabla f=(0,g)^T$.
The presence of the Rayleigh-Taylor instability implies that the solutions of the equations \eqref{main} near such unstable hydrostatic equilibrium will grow over time.
   \end{remark}

We now present our results on the nonlinear instability of general linearly unstable density profiles $\rho_s$. To this end, we consider perturbations about a smooth hydrostatic equilibrium $(\mathbf{u}_s, \rho_s) = (\mathbf{0}, \rho_s)$. Introducing the perturbative variables $ (\mathbf{u} - \mathbf{u}_s, \rho - \rho_s, p - p_s)$, we obtain the following system, which governs the evolution of the perturbation:
   \begin{align}\label{213}
\begin{cases}
\frac{\partial \mathbf{u}}{\partial t}+ (\mathbf{u} \cdot \nabla )\mathbf{u} = \nu \Delta \mathbf{u}-
\frac{1}{\rho^*} \nabla P-\frac{\rho}{\rho^*}\nabla f
 \\ \frac{\partial \rho}{\partial t}+(\mathbf{u}\cdot  \nabla)\rho
 =-(\mathbf{u}\cdot  \nabla  \rho_s),
\\ \nabla \cdot  \mathbf{u}=0,
\end{cases}
\end{align}
subject to the boundary conditions \eqref{cond-1}-\eqref{cond-2} and
 the initial data
\begin{align}\label{ii-c}
\mathbf{u}|_{t=0}=\mathbf{u}_0,\quad \rho|_{t=0}=\rho_0.
\end{align}

\begin{theorem}\label{hadamardyiyixia0202}[{\bf Nonlinear instability} ]
If there exists a point \(\left(x_{0},y_{0}\right)\in \Omega\) such that \(h\left(x_{0},y_{0}\right)>0\), the steady-state solution \(\left(\mathbf{0},\rho_{s}\right)\) is unstable in Hadamard sense. That is, there exist two constants  \(\epsilon\) and \(\delta_{0}\), and functions \(\left(\mathbf{u}_{0},\varrho_{0}\right)\)\(\in \left[H^{2}\left(\Omega\right)\right]^2\times \left[H^{1}\left(\Omega\right)\cap L^{\infty}\left(\Omega\right)\right]\), such that for any \(\delta^{*}\in\left(0,\varrho_{0}\right)\) and initial data \(\left(\mathbf{u}_{0}^{\delta^{*}},\varrho_{0}^{\delta^{*}}\right):=\delta^{*}\left(\mathbf{u}_{0},\varrho_{0}\right)\), the strong solution \(\left(\mathbf{u}^{\delta^{*}},\varrho^{\delta^{*}}\right)\in C\left(0,T_{\text{max}},\left[H^{1}\left(\Omega\right)\right]^2\times L^{2}\left(\Omega\right)\right)\) of the problem \eqref{213}-\eqref{ii-c} subject to
the conditions \eqref{cond-1}-\eqref{cond-2} and
initial data \(\left(\mathbf{u}_{0}^{\delta^{*}},\varrho_{0}^{\delta^{*}}\right)\) satisfies 
\[
\left\|\varrho^{\delta^{*}}\left(T^{\delta^{*}}\right)\right\|_{L^{p}\left(\Omega\right)}\geq \epsilon,~
\left\|\mathbf{u}^{\delta^{*}}\left(T^{\delta^{*}}\right)\right\|_{L^p\left(\Omega\right)}\geq \epsilon,\quad
p\in[1,\infty],
\]
for some escape time \(0<T^{\delta^{*}}<T_{max}\), where \(T_{max}\) is the maximal existence time of \(\left(\mathbf{u}^{\delta^{*}},
\varrho^{\delta^{*}}\right)\).
\end{theorem}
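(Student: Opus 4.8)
The proof will follow the bootstrap (Grenier--Guo--Tice type) scheme: a linear growing mode is shown to dominate the nonlinear evolution up to a precisely chosen escape time, at which the perturbation has grown to a fixed size.

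\emph{Construction of the growing mode.} First I would invoke \autoref{theorem-3}. Seeking normal modes $e^{\lambda t}(\mathbf{u},\rho)$ of the linearized perturbation equations obtained from \eqref{213},
\begin{align*}
\lambda\mathbf{u}=\nu\Delta\mathbf{u}-\tfrac{1}{\rho^*}\nabla P-\tfrac{\rho}{\rho^*}\nabla f,\qquad \lambda\rho=-h\,\mathbf{u}\cdot\nabla f,\qquad \nabla\cdot\mathbf{u}=0,
\end{align*}
I eliminate the density via $\rho=-\tfrac{h}{\lambda}\mathbf{u}\cdot\nabla f$ and test with $\mathbf{u}$, which yields the dispersion relation
\begin{align*}
\lambda^2\|\mathbf{u}\|_{L^2}^2+\lambda\,\nu\|\nabla\mathbf{u}\|_{L^2}^2+\lambda\,(\text{boundary terms})=\tfrac{1}{\rho^*}\int_{\Omega}h\,(\mathbf{u}\cdot\nabla f)^2\,dx\,dy.
\end{align*}
Since $h(x_0,y_0)>0$, localizing $\mathbf{u}$ near $(x_0,y_0)$ makes the right-hand side positive, so the associated family of self-adjoint problems (parametrized by $\lambda$) admits a positive largest eigenvalue; optimizing over $\lambda>0$ produces the maximal growth rate $\Lambda>0$ together with a smooth, $L^2$-normalized growing mode $(\tilde{\mathbf{u}},\tilde\rho)$. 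I take $(\mathbf{u}_0,\varrho_0)=(\tilde{\mathbf{u}},\tilde\rho)$ and prescribe $\delta^*(\mathbf{u}_0,\varrho_0)$ as initial data.

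\emph{Energy functional and bootstrap.} Writing \eqref{213} as $\partial_t w=Lw+N(w)$ with $w=(\mathbf{u},\rho)$ and $N(w)=(-(\mathbf{u}\cdot\nabla)\mathbf{u},\,-\mathbf{u}\cdot\nabla\rho)$, I would build a weighted energy $\mathcal{E}(t)$ combining $\|\mathbf{u}\|_{H^2}^2$, $\|\rho\|_{H^1}^2+\|\rho\|_{L^\infty}^2$ and the buoyancy cross-term, and derive the differential inequality
\begin{align*}
\tfrac{d}{dt}\mathcal{E}(t)\le 2\Lambda\,\mathcal{E}(t)+C\,\mathcal{E}(t)^{3/2},
\end{align*}
the linear part reproducing the growth rate $\Lambda$ and the cubic remainder collecting the quadratic nonlinearity. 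Under the bootstrap hypothesis $\mathcal{E}(t)\le 4(\delta^*)^2e^{2\Lambda t}$ on a time interval, Gronwall's inequality (equivalently, a Duhamel comparison against the linear flow) gives $\mathcal{E}(t)\le (\delta^*)^2e^{2\Lambda t}+C(\delta^*)^3e^{3\Lambda t}$, so the nonlinear correction is of relative order $\delta^* e^{\Lambda t}$ while this quantity stays small.

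\emph{The main obstacle.} The crux is the derivative loss in the transport term $\mathbf{u}\cdot\nabla\rho$: because the density equation carries no diffusion, no parabolic smoothing is available to recover the lost derivative. I would resolve this by exploiting the divergence-free structure---in the top-order density estimate the dangerous term integrates to $\tfrac12\int_{\Omega}(\nabla\cdot\mathbf{u})|\nabla\rho|^2\,dx\,dy=0$---so that only $\|\nabla\mathbf{u}\|_{L^\infty}$-type factors remain, controlled through the viscous smoothing of the velocity component, while the $L^\infty$ norm of the density is propagated directly from the transport equation. The boundary contributions generated by \eqref{cond-1}--\eqref{cond-2} are nonnegative under $\nu a^{-1}+\alpha\ge 0$, exactly as in the energy identity \eqref{s-proof-2}, and therefore help rather than obstruct the estimate.

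\emph{Escape in every $L^p$.} Fixing a small threshold $\kappa$, I define the escape time $T^{\delta^*}$ by $\delta^* e^{\Lambda T^{\delta^*}}=\kappa$; for $\delta^*$ small this lies below $T_{\max}$, and on $[0,T^{\delta^*}]$ the nonlinear correction to the energy, of order $\kappa^3$, is negligible against the leading term $\kappa^2$, which closes the bootstrap. Consequently the perturbation at $t=T^{\delta^*}$ is dominated by $\delta^* e^{\Lambda T^{\delta^*}}(\tilde{\mathbf{u}},\tilde\rho)=\kappa(\tilde{\mathbf{u}},\tilde\rho)$, a fixed smooth profile, whence $\|\mathbf{u}^{\delta^*}(T^{\delta^*})\|_{L^2},\|\varrho^{\delta^*}(T^{\delta^*})\|_{L^2}\ge\epsilon$. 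To obtain escape in every $L^p$, $p\in[1,\infty]$, I would use that $(\tilde{\mathbf{u}},\tilde\rho)$ is bounded below in each $L^p(\Omega)$ norm on the bounded domain, together with the $L^\infty$ control of the error afforded by $H^2(\Omega)\hookrightarrow L^\infty(\Omega)$ and the propagated $L^\infty$ bound on $\rho$; interpolation then furnishes a single $\epsilon$ valid for all $p$.
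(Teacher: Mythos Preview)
The paper itself does not supply a proof of this theorem: it states in the remark immediately following the statement that ``one can adapt the method of \cite{Jiang2014-ad}'' and omits the details. Your proposal is precisely that adaptation---the Guo--Strauss/Grenier bootstrap from a linear growing mode---so the overall strategy matches what the paper intends.

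One point you pass over too quickly is the \emph{sharp growth rate} estimate for the linearized semigroup: to run the Duhamel comparison
\[
w^{\delta^*}(t)-\delta^* e^{\Lambda t}(\tilde{\mathbf u},\tilde\rho)=\int_0^t e^{(t-s)L}N\bigl(w^{\delta^*}(s)\bigr)\,ds
\]
and conclude that the remainder is $O\bigl((\delta^*)^2 e^{2\Lambda t}\bigr)$, you need $\|e^{tL}\|_{X\to X}\le Ce^{\Lambda t}$ in the relevant norm $X$. This does not follow from the mere existence of a growing mode; it requires showing that $\Lambda$ is the supremum of the real part of the spectrum (or, equivalently, that the variational characterization of $\Lambda$ in \autoref{pro-1}--\autoref{proposition-3} yields a uniform Rayleigh-quotient bound). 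In \cite{Jiang2014-ad} this step is carried out explicitly, and it is what your parenthetical ``Duhamel comparison against the linear flow'' presupposes. Your differential inequality $\tfrac{d}{dt}\mathcal E\le 2\Lambda\mathcal E+C\mathcal E^{3/2}$ gives only the upper bound needed to close the bootstrap; the lower bound at the escape time---the actual instability---comes from the Duhamel identity together with the semigroup estimate, not from the inequality alone. Once this is made precise, the rest of your outline (transport estimates via $\nabla\cdot\mathbf u=0$, propagation of $\|\rho\|_{L^\infty}$, sign of the boundary terms, and the $L^p$ interpolation on the bounded domain) is correct.
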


    \begin{remark}
    One can adapt the method of \rm{\cite{Jiang2014-ad}} to prove \autoref{hadamardyiyixia0202}.
We omit the details.
     \end{remark}
     
The instability results presented above highlight the possibility of transient growth in perturbations around certain hydrostatic equilibria in a short time. Nevertheless, the question of how such perturbations evolve over large time scales—remains open. To address this, we now turn to the regularity and asymptotic behavior of solutions to  the nonlinear system \eqref{213}
subject to the boundary conditions \eqref{cond-1}-\eqref{cond-2} and the initial data \eqref{ii-c}. The following subsection establishes global-in-time regularity estimates and describes the large-time dynamics of the system  \eqref{main}
 on a circular domain or other bounded domains, demonstrating that despite possible short-term instability, the velocity field eventually decays and the density profile approaches a steady state.

    \subsection{Regularity and large time behavior of solutions}
 \begin{theorem}\label{theorem-1}
 Let $q\geq 4$. For the problem \eqref{213}-\eqref{ii-c}  with initial data $(\mathbf{u}_0,\rho_0) \in H^{2}\left(\Omega\right)\times L^{q}\left(\Omega\right)$
 satisfying the incompressibility $\nabla\cdot \mathbf{u}_0=0$ and 
 subject to the boundary conditions
\eqref{cond-1}-\eqref{cond-2}, if $\alpha \in  W^{1,\infty}\left(\partial B(0,a)\right)$,
 $\nabla f \in  W^{1,\infty}\left(\Omega\right)$ and $\left(\nu a^{-1}+\alpha\right)\geq 0$,
then we get that the solutions of \eqref{213} satisfy the following properties:
\begin{subequations}
     \begin{align}\label{theorem-1-conc-1}
&\mathbf{u} \in L^{\infty}\left(\left(0,\infty\right);H^2(\Omega)\right)
\cap L^{p}\left(\left(0,\infty\right);W^{1,p}(\Omega)\right),\quad 2\leq p<\infty,\\
\label{theorem-1-conc-2}
&\mathbf{u}_t \in
L^{\infty}\left(\left(0,\infty\right);L^2(\Omega)\right)
\cap L^{2}\left(\left(0,\infty\right);H^{1}(\Omega)\right),\\
\label{theorem-1-conc-3}
&p\in L^{\infty}\left(\left(0,\infty\right);H^1(\Omega)\right),\\
\label{theorem-1-conc-4}
&\rho \in L^{\infty}\left(\left(0,\infty\right);L^s(\Omega)\right),\quad 1\leq s\leq q.
 \end{align}
   \end{subequations}
If $\alpha \in  W^{2,\infty}\left(\partial B(0,a)\right)$
 and $(\mathbf{u}_0,\rho_0) \in H^{3}\left(\Omega\right)\times W^{1,q}\left(\Omega\right)
$ with $q\geq 2$,  then for any $T>0$,
\begin{subequations}
     \begin{align}\label{theorem-1-conc-11}
&\mathbf{u} \in L^{2}\left(\left(0,T\right);H^3(\Omega)\right)
\cap L^{\frac{2p}{p-2}}\left(\left(0,T\right);W^{2,p}(\Omega)\right),\quad 2\leq p<\infty,\\
\label{theorem-1-conc-12}
&\rho \in L^{\infty}\left(\left(0,T\right);W^{1,s}(\Omega)\right),\quad 1\leq s\leq q.
 \end{align}
   \end{subequations}
 \end{theorem}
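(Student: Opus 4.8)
The plan is to reduce the problem to a global regularity question for a two--dimensional Navier--Stokes system driven by a buoyancy force that, crucially, remains bounded in $L^q$ uniformly in time. The starting point is the observation that the \emph{total} density $\sigma:=\rho+\rho_s$ satisfies the pure transport equation $\partial_t\sigma+(\mathbf u\cdot\nabla)\sigma=0$: adding $(\mathbf u\cdot\nabla)\rho_s$ to the second equation of \eqref{213} and using $\partial_t\rho_s=0$ cancels the source. Since $\mathbf u$ is divergence free and $\mathbf u\cdot\mathbf n=0$ on $\partial\Omega$, every spatial $L^p$ norm of $\sigma$ is conserved, so $\|\rho(t)\|_{L^s}\le\|\sigma_0\|_{L^s}+\|\rho_s\|_{L^s}$, which is \eqref{theorem-1-conc-4}, and the buoyancy force $-\tfrac{1}{\rho^*}\rho\nabla f$ is controlled in $L^\infty((0,\infty);L^q(\Omega))$ because $\nabla f\in L^\infty$. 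This uniform bound is what substitutes for the cancellation of cross terms that is unavailable in the unstable regime. Testing the momentum equation with $\mathbf u$, the boundary integrals are nonnegative under $\nu a^{-1}+\alpha\ge0$ exactly as in the derivation of \eqref{s-proof-2}, the convective and pressure terms drop, and the forcing is bounded by $\|\rho\|_{L^2}\|\nabla f\|_{L^\infty}\|\mathbf u\|_{L^2}$; with the Poincar\'e inequality for divergence--free fields tangent to $\partial\Omega$ and Young's inequality this yields $\tfrac{d}{dt}\|\mathbf u\|_{L^2}^2+c\|\nabla\mathbf u\|_{L^2}^2\le C$, hence a uniform bound on $\|\mathbf u(t)\|_{L^2}$ and $\int_0^\infty\|\nabla\mathbf u\|_{L^2}^2\,dt<\infty$.

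Next I would establish \eqref{theorem-1-conc-1}, \eqref{theorem-1-conc-3} and the $L^\infty L^2$ part of \eqref{theorem-1-conc-2} simultaneously. Testing the momentum equation with $\mathbf u_t$ and integrating the viscous term by parts produces $\|\mathbf u_t\|_{L^2}^2+\tfrac{\nu}{2}\tfrac{d}{dt}\|\nabla\mathbf u\|_{L^2}^2$ plus controllable boundary terms, with forcing bounded by $\|\rho\|_{L^q}\|\nabla f\|_{L^\infty}\|\mathbf u_t\|_{L^2}$. In parallel I would invoke Stokes elliptic regularity adapted to \eqref{cond-1}--\eqref{cond-2} to bound $\|\mathbf u\|_{H^2}+\|P\|_{H^1}$ by $\|\mathbf u_t\|_{L^2}+\|(\mathbf u\cdot\nabla)\mathbf u\|_{L^2}+\|\rho\nabla f\|_{L^2}$, the convective term being absorbed through the two--dimensional Ladyzhenskaya and Gagliardo--Nirenberg inequalities. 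The resulting differential inequality has the critical form $\tfrac{d}{dt}\|\nabla\mathbf u\|_{L^2}^2\le C\|\nabla\mathbf u\|_{L^2}^2(1+\|\nabla\mathbf u\|_{L^2}^2)+C$, which closes globally because $\int_0^\infty\|\nabla\mathbf u\|_{L^2}^2\,dt<\infty$; this gives $\mathbf u\in L^\infty H^2$, the pressure bound \eqref{theorem-1-conc-3}, and then $\mathbf u_t\in L^\infty L^2$ by reading the equation. The space--time bound $\mathbf u\in L^p W^{1,p}$ follows by interpolating $\|\nabla\mathbf u\|_{L^p}\lesssim\|\nabla\mathbf u\|_{L^2}^{2/p}\|\mathbf u\|_{H^2}^{1-2/p}$ against $\mathbf u\in L^\infty H^2$ and the integrability $\int_0^\infty\|\nabla\mathbf u\|_{L^2}^2\,dt<\infty$.

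For the remaining $L^2 H^1$ part of \eqref{theorem-1-conc-2} I would differentiate the momentum equation in time and test with $\mathbf u_t$. The delicate term $-\tfrac{1}{\rho^*}\int\rho_t\,\nabla f\cdot\mathbf u_t$ is rewritten via $\rho_t=-\nabla\cdot(\sigma\mathbf u)$ and integrated by parts; since $\mathbf u\cdot\mathbf n=0$ the boundary contribution vanishes, and the interior term $\int\sigma\,\mathbf u\cdot\nabla(\nabla f\cdot\mathbf u_t)$ is controlled using $\nabla f\in W^{1,\infty}$, $\sigma\in L^q$, $\mathbf u\in H^2\hookrightarrow L^\infty$, and a Young absorption into $\|\nabla\mathbf u_t\|_{L^2}^2$, its time integral converging because $\mathbf u\in L^2 H^2$; a Gronwall argument with factor $\int_0^\infty\|\nabla\mathbf u\|_{L^2}^2\,dt<\infty$ then yields \eqref{theorem-1-conc-2}. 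For the second, finite--time part, I would propagate $\nabla\sigma$ through $\partial_t\nabla\sigma+(\mathbf u\cdot\nabla)\nabla\sigma=-(\nabla\mathbf u)^{Tr}\nabla\sigma$, giving $\tfrac{d}{dt}\|\nabla\sigma\|_{L^s}\le\|\nabla\mathbf u\|_{L^\infty}\|\nabla\sigma\|_{L^s}$, which requires $\nabla\mathbf u\in L^1((0,T);L^\infty)$ and hence the $L^2 H^3$ bound of \eqref{theorem-1-conc-11} via $H^3\hookrightarrow \mathrm{Lip}$ on a finite interval. The $L^2 H^3$ estimate is a higher--order energy estimate—now the buoyancy force may be differentiated since $\nabla\rho$ is available—closed by Gronwall on $[0,T]$, with the extra boundary integrals requiring $\alpha\in W^{2,\infty}$; the two estimates are coupled and closed together, establishing \eqref{theorem-1-conc-11}--\eqref{theorem-1-conc-12}.

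The hardest point is obtaining uniform--in--time higher--order bounds in the Rayleigh--Taylor unstable regime, where the buoyancy cross term has no favorable sign and the classical cancellation used in \cite{Charles-R2018,Jiang2014-ad} fails. This obstruction is removed by the transport identity for $\sigma$, which caps $\|\rho\|_{L^q}$ for all time and converts the buoyancy term into a benign forcing; it is precisely this mechanism that also explains why no uniformity in time is claimed in the second part, since in the unstable case $\nabla\rho$ may genuinely grow. The second recurring difficulty, present at every differentiation order, is managing the boundary integrals generated by the mixed free--slip/Navier--slip conditions \eqref{cond-1}--\eqref{cond-2}: keeping them sign--definite or absorbable hinges on $\nu a^{-1}+\alpha\ge0$ and on the regularity of $\alpha$, and on establishing the Stokes elliptic regularity tailored to these boundary conditions.
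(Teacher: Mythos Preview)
Your proposal is essentially correct and follows the same architecture as the paper: conservation of $L^s$ norms of the total density $\sigma=\rho+\rho_s$, the basic energy identity for $\mathbf u\in L^\infty L^2\cap L^2 H^1$, testing with $\mathbf u_t$ combined with Stokes regularity for the $H^1$--$H^2$ bounds, the time--differentiated estimate for $\mathbf u_t$, and a coupled estimate for the higher--order part. Two points of divergence are worth flagging. First, for the uniform $H^1$ bound you estimate $-\int\rho\,\nabla f\cdot\mathbf u_t$ directly by Young's inequality, which produces an additive constant and hence the inequality $y'\le Cy(1+y)+C$; the paper instead rewrites this term as $-\partial_t\int(\rho+\rho_s)\mathbf u\cdot\nabla f+\int(\rho+\rho_s)\mathbf u\cdot\nabla(\mathbf u\cdot\nabla f)$, obtaining a right--hand side of pure $y\,h(y)$ form with no additive constant, and then runs an explicit Bihari--Osgood argument using $\int^\infty ds/h(s)=\infty$ together with $\int_0^\infty y\,dt<\infty$. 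Your version also closes, but the nonlinear Gronwall step is not the standard linear one and should be spelled out. Second, your ordering is slightly circular: you claim $\mathbf u\in L^\infty H^2$ directly from the $\|\nabla\mathbf u\|_{L^2}^2$ estimate and then read off $\mathbf u_t\in L^\infty L^2$, but the Stokes bound for $H^2$ requires $\|\mathbf u_t\|_{L^2}$ pointwise in time, which your first estimate only controls in $L^2_t$. The paper resolves this by first establishing $\mathbf u_t\in L^\infty L^2$ from the time--differentiated equation (the very estimate you reserve for the $L^2 H^1$ part), and only then invoking Stokes for $L^\infty H^2$. Finally, your ``coupled and closed together'' for the $H^3$--$W^{1,q}$ part is made precise in the paper via the logarithmic Sobolev inequality $\|\nabla\mathbf u\|_{L^\infty}\le C(1+\|\nabla\mathbf u\|_{H^1})\log(1+\|\nabla\mathbf u\|_{H^2})$ and a logarithmic Gronwall lemma, which is what breaks the apparent circularity between $\|\nabla\rho\|_{L^q}$ and $\|\Delta\omega\|_{L^2}$.
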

    \begin{remark}
The global existence of weak and strong
solutions of the problem \eqref{213}-\eqref{ii-c} subject to the boundary conditions \eqref{cond-1}-\eqref{cond-2} and
on the circular domain can be proved by employing the standard method, see \rm{\cite{Lai-2011}}
and the references therein.
    \end{remark}
    
  \begin{theorem}\label{theorem-2}[\textbf{Large time behavior}]
 Under the conditions of \autoref{theorem-1},
  for any $1\leq r<\infty, (\gamma, \beta)\in\R^+\times \R$,
the solutions of the problem \eqref{213}-\eqref{ii-c} subject to the boundary conditions \eqref{cond-1}-\eqref{cond-2}
satisfy the following  coclusioins:
  \begin{enumerate}
  \item [\rm{(1)}] The solutions of the problem \eqref{213}-\eqref{ii-c}
satisfy the following asymptotic properties:
\begin{subequations}
     \begin{align}\label{theorem-2-conc-1}
&\norm{\mathbf{u} }_{W^{1,r}}\to 0,\quad t\to \infty,\\
\label{theorem-2-conc-2}
&\norm{\mathbf{u}_t }_{L^{2}}\to 0,\quad t\to \infty, \\
\label{theorem-2-conc-3}
&\Delta \mathbf{u}\rightharpoonup 0 \quad \text{in} \quad L^2(\Omega),\quad t\to \infty,\\
\label{theorem-2-conc-4}
&\nabla P+\rho \nabla f\rightharpoonup 0 \quad \text{in} \quad L^2(\Omega),\quad t\to \infty.
 \end{align}
 \end{subequations}
 \item [\rm{(2)}] For any $\gamma>0$ and $\beta \in \R$, we have
 \begin{subequations}
     \begin{align}
\label{theorem-22-conc-4-1}
&\int_{\Omega}\varrho f \,d\mathbf{x}\to I_1,\quad t\to \infty,\\
\label{theorem-22-conc-4}
&\norm{\rho +\rho_s+\gamma f-\beta}_{L^{2}}^2 \to I_2,\quad t\to \infty,
 \end{align}
 \end{subequations}
   where $I_1$ and $I_2$ are two constants which satisfy 
   \begin{subequations}
     \begin{align}\label{I-11a}
              \begin{aligned}
     & I_1\leq \frac{\norm{\mathbf{u}_0 }_{L^{2}}^2}{2}+
     \int_{\Omega}\varrho_0 f \,d\mathbf{x},\\ 
   &0\leq I_2\leq
   \gamma \norm{\mathbf{u}_0 }_{L^{2}}^2+\norm{\varrho_0+\rho_s+\gamma f-\beta}_{L^{2}}^2,\\
&\norm{\varrho_0+\rho_s+\gamma f(x,y)-\beta}_{L^2}^2-I_2
  =2\gamma \left(\int_{\Omega}\varrho _0f \,d\mathbf{x}- I_1\right).
 \end{aligned}
  \end{align}
 \end{subequations}
\end{enumerate}
 \end{theorem}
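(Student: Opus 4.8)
The plan is to construct a single monotone Lyapunov functional coupling the kinetic energy with the \emph{linear} gravitational potential energy; its dissipation drives all the limits in part~(1), while part~(2) is pure algebra built on an exact conservation law for the transported density. The conceptual crux is the orthogonality
\[
\int_{\Omega}\rho_s\,\mathbf{u}\cdot\nabla f\,d\mathbf{x}=0
\qquad\text{for all }\mathbf{u}\text{ with }\nabla\cdot\mathbf{u}=0,\ \mathbf{u}\cdot\mathbf{n}|_{\partial\Omega}=0,
\]
which holds precisely because \eqref{rhos} says $\rho_s\nabla f=-\nabla p_s$ is a gradient, hence $L^2$-orthogonal to divergence-free no-penetration fields. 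Testing the momentum equation of \eqref{213} with $\mathbf{u}$ produces the cross term $-\tfrac1{\rho^*}\int_\Omega\varrho\,\mathbf{u}\cdot\nabla f$, while differentiating the potential and using the transport structure of $\varrho+\rho_s$ together with the orthogonality above gives $\tfrac{d}{dt}\int_\Omega\varrho f=\int_\Omega\varrho\,\mathbf{u}\cdot\nabla f$. Adding the two, the cross terms cancel exactly:
\[
\frac{d}{dt}\!\left(\frac{\rho^*}{2}\,\norm{\mathbf{u}}_{L^2}^2+\int_\Omega\varrho f\,d\mathbf{x}\right)=-\rho^*\nu\,\norm{\nabla\mathbf{u}}_{L^2}^2-\rho^*\,B[\mathbf{u}]\le0,
\]
where $B[\mathbf{u}]\ge0$ is the boundary functional from the proof of \autoref{steady-lemma} (nonnegative since $\nu a^{-1}+\alpha\ge0$). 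Because the transported density $\varrho+\rho_s$ preserves its $L^2$ norm, $\norm{\varrho}_{L^2}$ is bounded, so the functional $\mathcal{E}(t)$ above is bounded below; hence $\mathcal{E}(t)\downarrow\mathcal{E}_\infty$ and $\int_0^\infty\norm{\nabla\mathbf{u}}_{L^2}^2\,dt<\infty$. This is exactly the step that sidesteps the sign obstruction noted in the introduction: that obstruction concerns the \emph{quadratic} energy $\norm{\varrho}_{L^2}^2$, whereas the \emph{linear} potential $\int_\Omega\varrho f$ pairs with the kinetic energy irrespective of the sign of $h$.

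For part~(1), set $g(t)=\norm{\nabla\mathbf{u}(t)}_{L^2}^2$. It is integrable by the above and uniformly continuous: \autoref{theorem-1} gives $\mathbf{u}\in L^\infty((0,\infty);H^2)$ and $\mathbf{u}_t\in L^2((0,\infty);H^1)$, so $\mathbf{u}\in C^{1/2}([0,\infty);H^1)$ and $g$ is uniformly H\"older. Barbalat's lemma then yields $\norm{\nabla\mathbf{u}}_{L^2}\to0$; a Poincar\'e inequality on the space of divergence-free no-penetration fields (whose only $\nabla$-kernel element is $\mathbf{0}$ on the annulus) upgrades this to $\norm{\mathbf{u}}_{H^1}\to0$, and Gagliardo--Nirenberg interpolation against the uniform $H^2$ bound gives \eqref{theorem-2-conc-1}. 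Estimate \eqref{theorem-2-conc-2} follows from a second Barbalat argument applied to $\norm{\mathbf{u}_t}_{L^2}^2\in L^1((0,\infty))$, using $\mathbf{u}_t\in L^\infty(L^2)$ and an $L^2((0,\infty);H^{-1})$ bound on $\mathbf{u}_{tt}$ obtained from the time-differentiated momentum equation. For \eqref{theorem-2-conc-3} I pair $\Delta\mathbf{u}$ with $\psi\in C_c^\infty(\Omega)$, integrate by parts twice and use $\norm{\mathbf{u}}_{L^2}\to0$, then extend to all $L^2$ test functions by density via the uniform bound $\norm{\Delta\mathbf{u}}_{L^2}\le C$. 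Finally \eqref{theorem-2-conc-4} follows by rewriting the momentum equation as $\nabla P+\varrho\nabla f=\rho^*\!\left(\nu\Delta\mathbf{u}-\mathbf{u}_t-(\mathbf{u}\cdot\nabla)\mathbf{u}\right)$ and noting that $\mathbf{u}_t\to0$ and $(\mathbf{u}\cdot\nabla)\mathbf{u}\to0$ strongly in $L^2$ while $\Delta\mathbf{u}\rightharpoonup0$.

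For part~(2), note that $\varrho+\rho_s-\beta$ solves the transport equation $\partial_t(\cdot)+\mathbf{u}\cdot\nabla(\cdot)=0$, so $\norm{\varrho+\rho_s-\beta}_{L^2}$ is conserved. Expanding $\norm{\varrho+\rho_s+\gamma f-\beta}_{L^2}^2$ then yields the exact relation
\[
\norm{\varrho(t)+\rho_s+\gamma f-\beta}_{L^2}^2=\norm{\varrho_0+\rho_s+\gamma f-\beta}_{L^2}^2+2\gamma\!\left(\int_\Omega\varrho(t) f\,d\mathbf{x}-\int_\Omega\varrho_0 f\,d\mathbf{x}\right),
\]
which is the third relation in \eqref{I-11a}. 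Since $\norm{\mathbf{u}}_{L^2}\to0$ and $\mathcal{E}(t)\to\mathcal{E}_\infty$, the potential converges, $\int_\Omega\varrho f\,d\mathbf{x}\to\mathcal{E}_\infty=:I_1$, which is \eqref{theorem-22-conc-4-1}; inserting this into the relation above gives \eqref{theorem-22-conc-4} with $I_2=\norm{\varrho_0+\rho_s+\gamma f-\beta}_{L^2}^2+2\gamma(I_1-\int_\Omega\varrho_0 f)$. The bounds in \eqref{I-11a} are then immediate: $I_1\le\mathcal{E}(0)$ by monotonicity of $\mathcal{E}$, $I_2\ge0$ as a limit of squared norms, and the upper bound on $I_2$ by combining these with $\gamma>0$.

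I expect the main technical obstacle to be the uniform-continuity hypotheses required by the two Barbalat steps, and in particular the $L^2((0,\infty);H^{-1})$ control of $\mathbf{u}_{tt}$ needed for \eqref{theorem-2-conc-2}; this is where one must invoke the full force of the regularity estimates of \autoref{theorem-1}. By contrast, the algebraic backbone---the orthogonality $\int_\Omega\rho_s\mathbf{u}\cdot\nabla f=0$, the resulting monotone functional, and the transport conservation law---is short, and it is the identification of the correct \emph{linear} potential energy that makes the unstable case tractable.
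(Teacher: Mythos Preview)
Your proposal is correct and follows the paper's strategy closely: the Lyapunov functional $\tfrac{\rho^*}{2}\|\mathbf{u}\|_{L^2}^2+\int_\Omega\varrho f$ is exactly the paper's energy identity \eqref{one-proof-1-n}, and the Barbalat/interpolation/weak-limit chain for part~(1) matches Sections~3.3.1--3.3.3.

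Two minor differences in execution are worth noting. For part~(2) the paper builds a \emph{second} monotone functional $\gamma\rho^*\|\mathbf{u}\|_{L^2}^2+\|\varrho+\rho_s+\gamma f-\beta\|_{L^2}^2$ via a change of variables (Section~3.3.5), whereas you reach the third line of \eqref{I-11a} more directly by expanding the squared norm and invoking the transport conservation of $\|\varrho+\rho_s-\beta\|_{L^2}$; the two are algebraically equivalent (subtract $\gamma$ times \eqref{one-proof-1-n} from the paper's identity \eqref{two-proof--2} and the kinetic and dissipation terms cancel), but your route is shorter. For the uniform continuity of $\|\mathbf{u}_t\|_{L^2}^2$ in the second Barbalat step, the paper does not go through $\mathbf{u}_{tt}\in L^2((0,\infty);H^{-1})$ as you suggest; it simply reads a Lipschitz bound off the differential inequality \eqref{ut-proof-5}, whose right-hand side is uniformly bounded by the estimates of \autoref{theorem-1}. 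Your route also works (the pairing $\langle\mathbf{u}_{tt},\mathbf{u}_t\rangle_{H^{-1},H^1}$ lies in $L^1(0,\infty)$, so its primitive is uniformly continuous), but the paper's is more direct.
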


  \begin{theorem}\label{theorem-3-3}
\rm{[\textbf{Large time behavior}]}
 Under the conditions of \autoref{theorem-1},
  for the solution $( \mathbf{u},\varrho)$ of the problem\eqref{213}-\eqref{ii-c}, the following two conclusions hold:  
  \begin{enumerate}
   \item [\rm{(1)}] The following asymptotical result holds
    \begin{subequations}
         \begin{align}\label{theorem-2-conc-2-2-1}
    &\int_\Omega \varrho f d\mathbf{x}\to 0,\quad t\to \infty,
     \end{align}
      \end{subequations}
    if and only if there exist $\gamma>0$ and $\beta$ such that
     \begin{subequations}
  \begin{align}\label{dineg-1}
\begin{aligned}
\norm{\varrho_0+\rho_s+\gamma f(x,y)-\beta}_{L^2}^2
-\lim_{t\to +\infty}\norm{\varrho +\rho_s+\gamma f-\beta}_{L^{2}}^2
 =2\gamma \int_\Omega \varrho _0f d\mathbf{x}.
\end{aligned}
  \end{align}
   \end{subequations}
\item [\rm{(2)}] 
The following asymptotical result holds
    \begin{subequations}
         \begin{align}\label{theorem-2-conc-2-2}
    &\norm{\varrho +\rho_s-(-\gamma f+\beta)}_{L^{2}} \to 0,\quad t\to \infty,
     \end{align}
      \end{subequations}
    if and only if there exist $\gamma>0$ and $\beta$ such that
     \begin{subequations}
  \begin{align}\label{dineg}
\begin{aligned}
 \lim_{t\to\infty}2\gamma 
\int_\Omega (\varrho_0 -\varrho(t) )f d\mathbf{x}
= \norm{\varrho_0+\rho_s+\gamma f(x,y)-\beta}_{L^2}^2.
\end{aligned}
  \end{align}
   \end{subequations}
\end{enumerate}
 \end{theorem}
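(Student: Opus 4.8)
The plan is to reduce both equivalences to the single algebraic identity already supplied by \autoref{theorem-2}, namely the last relation in \eqref{I-11a}, which I abbreviate as
\[
\norm{\varrho_0+\rho_s+\gamma f-\beta}_{L^2}^2-I_2=2\gamma\Bigl(\int_\Omega\varrho_0 f\,d\mathbf{x}-I_1\Bigr),
\]
where $I_1:=\lim_{t\to\infty}\int_\Omega\varrho f\,d\mathbf{x}$ and $I_2:=\lim_{t\to\infty}\norm{\varrho+\rho_s+\gamma f-\beta}_{L^2}^2$ are the limits whose existence is guaranteed by \autoref{theorem-2}. The structural reason this identity holds is that the total density $w:=\rho_s+\varrho$ satisfies the pure transport equation $\partial_t w+(\mathbf{u}\cdot\nabla)w=0$ with $\nabla\cdot\mathbf{u}=0$ and $\mathbf{u}\cdot\mathbf{n}|_{\partial\Omega}=0$, so that both $\norm{w}_{L^2}^2$ and $\int_\Omega w\,d\mathbf{x}$ are conserved in time; expanding $\norm{w+\gamma f-\beta}_{L^2}^2$ then shows that its only time-dependent contribution is $2\gamma\int_\Omega w f\,d\mathbf{x}=2\gamma\int_\Omega\rho_s f\,d\mathbf{x}+2\gamma\int_\Omega\varrho f\,d\mathbf{x}$, so that $\norm{\varrho+\rho_s+\gamma f-\beta}_{L^2}^2$ and $2\gamma\int_\Omega\varrho f\,d\mathbf{x}$ differ only by a time-independent constant. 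Passing to the limit recovers the displayed identity; since \autoref{theorem-2} already records it, I would simply invoke it.

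For part (1), $\int_\Omega\varrho f\,d\mathbf{x}\to0$ is, by uniqueness of limits, the same as $I_1=0$. Substituting the identity into the left-hand side of \eqref{dineg-1} rewrites that relation as $2\gamma\bigl(\int_\Omega\varrho_0 f\,d\mathbf{x}-I_1\bigr)=2\gamma\int_\Omega\varrho_0 f\,d\mathbf{x}$, which, since $\gamma>0$, holds exactly when $I_1=0$. Because the resulting condition $I_1=0$ does not involve the chosen pair $(\gamma,\beta)$, relation \eqref{dineg-1} holds for some such pair if and only if it holds for every pair, and in either case if and only if $\int_\Omega\varrho f\,d\mathbf{x}\to0$; this establishes the first equivalence.

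For part (2), I first note the algebraic identity $\varrho+\rho_s-(-\gamma f+\beta)=\varrho+\rho_s+\gamma f-\beta$, so that, using $I_2\ge0$ from \eqref{I-11a} and continuity of the square root, $\norm{\varrho+\rho_s-(-\gamma f+\beta)}_{L^2}\to0$ is equivalent to $I_2=0$ for that particular pair $(\gamma,\beta)$. Rewriting the left-hand side of \eqref{dineg} as $2\gamma\bigl(\int_\Omega\varrho_0 f\,d\mathbf{x}-I_1\bigr)$ and inserting the identity turns \eqref{dineg} into $\norm{\varrho_0+\rho_s+\gamma f-\beta}_{L^2}^2-I_2=\norm{\varrho_0+\rho_s+\gamma f-\beta}_{L^2}^2$, that is $I_2=0$. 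Hence \eqref{dineg} holds for a pair $(\gamma,\beta)$ with $\gamma>0$ precisely when $\rho_s+\varrho$ converges in $L^2$ to the linear profile $-\gamma f+\beta$ for that same pair, which is the second equivalence.

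I expect no substantive analytic obstacle here, since the heavy lifting---the existence of the limits $I_1,I_2$ together with the conservation-based identity---is entirely contained in \autoref{theorem-2}. The only point demanding care is the bookkeeping of the quantifiers: in part (1) the reduced condition $I_1=0$ must be recognized as independent of $(\gamma,\beta)$, whereas in part (2) the condition $I_2=0$ is genuinely tied to the specific pair $(\gamma,\beta)$ that selects the target hydrostatic profile, and the two directions of each biconditional are then read off from these equalities.
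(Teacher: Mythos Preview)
Your proposal is correct and follows essentially the same approach as the paper. The paper's argument in Section~3.4 equates the two explicit energy identities \eqref{one-using} and \eqref{two-proof--3} (both expressing the total dissipation integral) and then cancels the kinetic terms, which is exactly the content of the relation $\eqref{I-11a}_3$ that you invoke directly; your observation that the condition $I_1=0$ in part~(1) is independent of $(\gamma,\beta)$ is a clean way to handle the quantifier, which the paper leaves implicit.
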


      \begin{remark}
 Based on \autoref{theorem-3}, if we consider the perturbation for any unstable steady-state $(\mathbf{0},\rho_s(x,y))$, although
the presence of the Rayleigh-Taylor instability means that there exists a linear or nonlinear growth with time of the perturbation \rm{\cite{Hwang-2003,Jiang2014-ad,JJ2013,Mao-2024}},
our result (\autoref{theorem-2}) indeed shows that the
fluid eventually comes into the vicinity of a stable state of hydrostatic equilibrium where the velocity becomes zero and the density
increases along the direction of gravity. Particularly,
the density can approach $ -\gamma f+\beta$ itself
as long as the condition \eqref{dineg} holds.
  \end{remark}
  
For stable steady profile $\rho_s$, we show that the convergence \eqref{theorem-2-conc-4} can be improved to
the strong case when $f=gr$. The linearized equations are given by
   \begin{align}\label{213-2}
\begin{cases}
\frac{\partial \mathbf{u}}{\partial t}= \nu \Delta \mathbf{u}-
\frac{1}{\rho^*} \nabla P-\frac{\rho}{\rho^*} \nabla gr,
 \\ \frac{\partial \rho}{\partial t}= -(\mathbf{u}\cdot  \nabla  \rho_s),
\\ \nabla \cdot  \mathbf{u}=0,
\end{cases}
\end{align}
 which is subject to the following boundary conditions
  \begin{align}\label{cond-1-1}
  &\mathbf{u} \cdot   \mathbf{n}|_{x^2+y^2=a^2}=0,\quad
  \mathbf{u} \cdot   \mathbf{n}|_{x^2+y^2=b^2}=\nabla \times   \mathbf{u}|_{x^2+y^2=a^2,b^2}=0,
  \end{align}

  \begin{theorem}\label{theorem-4}
For the equations \eqref{213-2} with initial data $(\mathbf{u}_0,\rho_0) \in H^{2}\left(\Omega\right)
 \times H^{1}\left(\Omega\right)$ satisfying incompressibility
$\nabla\cdot \mathbf{u}_0=0$
and subject to the boundary conditions \eqref{cond-1}, let $\rho_s$ be the profile, such that
 $\nabla \rho_s(x,y)=h(x,y)g\nabla r $ and $h (x,y)\leq h_0<0$.
The solutions of the equations \eqref{213-2} satisfy the following properties:
\begin{subequations}
\begin{align}\label{theorem-4-conc-1}
&\mathbf{u} \in L^{\infty}\left(\left(0,\infty\right),H^2(\Omega)\right)
\cap L^{p}\left(\left(0,\infty\right),H^{1}(\Omega)\right),\quad 2\leq p<\infty,\\
\label{theorem-4-conc-2}
&\mathbf{u}_t \in
L^{\infty}\left(\left(0,\infty\right);H^1(\Omega)\right)
\cap L^{2}\left(\left(0,\infty\right);H^{2}(\Omega)\right),\\
\label{theorem-4-conc-3}
&P\in L^{\infty}\left(\left(0,\infty\right);H^1(\Omega)\right),\\
\label{theorem-4-conc-4}
&\rho \in L^{\infty}\left(\left(0,\infty\right);L^2(\Omega)\right).
 \end{align}
   \end{subequations}
Furthermore, if $h (x,y)\equiv h_0<0$, we have 
\begin{subequations}
     \begin{align}\label{theorem-3-conc-1}
&\norm{\mathbf{u} }_{H^{2}}\to 0,\quad t\to \infty,\\
\label{theorem-3-conc-2}
&\norm{\mathbf{u}_t }_{H^{1}}\to 0,\quad t\to \infty,\\
\label{theorem-3-conc-3}
&\norm{\nabla P+\rho\nabla (gr)}_{L^{2}}\to 0 ,\quad t\to \infty.
 \end{align}
 \end{subequations}
 \end{theorem}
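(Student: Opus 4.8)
The plan is to exploit the fact that, in the stably stratified regime $h\le h_0<0$, the buoyancy cross terms can be cancelled by a suitable weighted density energy --- precisely the structure that fails in the unstable case highlighted in the introduction. Fix $A>0$ and introduce the weight $w(x,y):=-\frac{1}{A\rho^* h(x,y)}>0$, which is bounded above and below by positive constants because $h\in C^2(\overline\Omega)$ and $h\le h_0<0$. Testing the momentum equation of \eqref{213-2} with $\mathbf u$ and the density equation with $Aw\rho$, and using $\nabla\cdot\mathbf u=0$, $\mathbf u\cdot\mathbf n|_{\p\Omega}=0$ (so the pressure term drops) together with $\nabla\rho_s=hg\nabla r$, the buoyancy contributions $-\frac{g}{\rho^*}\int_\Omega\rho\,\mathbf u\cdot\nabla r$ and $-Ag\int_\Omega wh\,\rho\,\mathbf u\cdot\nabla r$ cancel exactly, leaving the dissipative identity
\begin{equation}\label{prop-energy}
\frac{d}{dt}\Big(\tfrac12\norm{\mathbf u}_{L^2}^2+\tfrac{A}{2}\int_\Omega w\rho^2\,d\mathbf x\Big)+\nu\norm{\nabla\mathbf u}_{L^2}^2+\mathcal B[\mathbf u]=0,
\end{equation}
where $\mathcal B[\mathbf u]\ge0$ collects the nonnegative boundary terms produced by the viscous integration by parts. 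This gives the base bounds $\mathbf u\in L^\infty(L^2)$, $\rho\in L^\infty(L^2)$ and $\int_0^\infty\norm{\nabla\mathbf u}_{L^2}^2\,dt<\infty$. Since $\mathbf u$ is divergence free with $\mathbf u\cdot\mathbf n|_{\p\Omega}=0$, the only such field with $\nabla\mathbf u=0$ is $\mathbf u=0$, so a Poincar\'e inequality $\norm{\mathbf u}_{L^2}\le C\norm{\nabla\mathbf u}_{L^2}$ holds; hence $\mathbf u\in L^2(L^2)$ and, as $\rho_t=-hg\,\mathbf u\cdot\nabla r$, also $\rho_t\in L^2(L^2)$, a fact used below to prevent linear growth in $t$.

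Next I would bootstrap the regularity. Differentiating \eqref{213-2} in $t$ preserves its structure, so the same weighted test applied to $(\mathbf u_t,\rho_t)$ produces an identity of the form \eqref{prop-energy} and hence $\mathbf u_t\in L^\infty(L^2)\cap L^2(H^1)$ and $\rho_t\in L^\infty(L^2)$; the datum $\mathbf u_t(0)\in L^2$ follows from $\mathbf u_0\in H^2$, $\rho_0\in L^2$ and the elliptic problem for $P(0)$. Reading the momentum equation as the steady Stokes system $-\nu\Delta\mathbf u+\frac1{\rho^*}\nabla P=-\mathbf u_t-\frac{\rho}{\rho^*}\nabla(gr)$ with right-hand side in $L^\infty(L^2)$, Stokes regularity yields $\mathbf u\in L^\infty(H^2)$ and $P\in L^\infty(H^1)$, and interpolating $L^\infty(H^1)\cap L^2(H^1)$ gives $\mathbf u\in L^p(H^1)$ for all $2\le p<\infty$. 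To reach $\mathbf u_t\in L^\infty(H^1)\cap L^2(H^2)$ I would test the $\mathbf u_t$-equation with $\mathbf u_{tt}$: the pressure term drops (as $\nabla\cdot\mathbf u_{tt}=0$ and $\mathbf u_{tt}\cdot\mathbf n|_{\p\Omega}=0$), the viscous term gives $\tfrac\nu2\frac{d}{dt}\norm{\nabla\mathbf u_t}_{L^2}^2$, and the buoyancy term is controlled by $\norm{\rho_t}_{L^2}^2$, which is integrable in $t$; this yields $\mathbf u_t\in L^\infty(H^1)$ and $\mathbf u_{tt}\in L^2(L^2)$, and Stokes regularity for the differentiated equation then gives $\mathbf u_t\in L^2(H^2)$.

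For the large-time behavior in the constant case $h\equiv h_0$ I would proceed as follows. From \eqref{prop-energy} and its time-differentiated analogue one has $\int_0^\infty\norm{\nabla\mathbf u}_{L^2}^2\,dt<\infty$ and $\int_0^\infty\norm{\nabla\mathbf u_t}_{L^2}^2\,dt<\infty$; since the uniform bounds make $t\mapsto\norm{\nabla\mathbf u}_{L^2}^2$ and $t\mapsto\norm{\nabla\mathbf u_t}_{L^2}^2$ uniformly continuous, Barbalat's lemma gives $\norm{\nabla\mathbf u}_{L^2}\to0$ and $\norm{\nabla\mathbf u_t}_{L^2}\to0$, and the Poincar\'e inequality upgrades these to $\norm{\mathbf u}_{H^1}\to0$ and $\norm{\mathbf u_t}_{H^1}\to0$, which is \eqref{theorem-3-conc-2}. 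The remaining conclusions are tied together by the exact identity $\nabla P+\rho\nabla(gr)=\rho^*(\nu\Delta\mathbf u-\mathbf u_t)$: once $\norm{\mathbf u_t}_{L^2}\to0$, both \eqref{theorem-3-conc-1} and \eqref{theorem-3-conc-3} are equivalent to the single statement $\norm{\Delta\mathbf u}_{L^2}\to0$, that is, to strong decay of the solenoidal buoyancy $\mathbf b:=\mathbb{P}[\rho\nabla(gr)]$, where $\mathbb{P}$ is the Leray projection.

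Establishing this strong decay is the crux, and the step I expect to be the main obstacle, since it is precisely the improvement of the weak convergence \eqref{theorem-2-conc-4} to the strong convergence asserted here. Eliminating $\rho$ shows that $\mathbf u$ solves the strongly damped wave equation $\mathbf u_{tt}+\nu A\mathbf u_t+\tfrac1{\rho^*}K\mathbf u=0$, where $A=-\mathbb{P}\Delta$ is the coercive, compact-resolvent Stokes operator and $K$ is the nonnegative operator $K\mathbf w=|h_0|g^2\,\mathbb{P}\big[(\mathbf w\cdot\nabla r)\nabla r\big]$. A plain energy argument only delivers the weak limit $\mathbf b\rightharpoonup0$, because $K$ has a large kernel (the purely tangential flows) and $\rho$ is not bounded in $H^1$ uniformly in $t$, so neither a naive spectral gap nor top-order orbit compactness is available. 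The plan is to use the uniform weighted bound $\rho\in L^\infty(L^2)$ from \eqref{prop-energy}, which excludes the neutral density-drifting modes of the damped wave, together with a spectral/invariance-principle argument to force $\norm{\Delta\mathbf u}_{L^2}\to0$; this then yields \eqref{theorem-3-conc-1} and, through the identity above, \eqref{theorem-3-conc-3}. I expect this passage from weak to strong convergence, and the accompanying treatment of the initial layer in the top-order $\mathbf u_t$ estimate when the data are only $H^2\times H^1$, to be the most delicate parts of the argument.
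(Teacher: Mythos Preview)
Your approach to the regularity bounds \eqref{theorem-4-conc-1}--\eqref{theorem-4-conc-4} via the weighted density energy is correct and is essentially what the paper does (it simply re-runs the nonlinear Lemmas~\ref{ll2}--\ref{21336} for the linear system, which amounts to your cancellation identity). Your route to $\mathbf u_t\in L^\infty(H^1)\cap L^2(H^2)$ by testing with $\mathbf u_{tt}$ is a minor variant; the paper instead works with the scalar vorticity $\omega_t$ (which vanishes on $\partial\Omega$ under \eqref{cond-1-1}), testing the $\omega_t$-equation with $\omega_t$ to get $\omega_t\in L^\infty(L^2)$ and $\nabla\omega_t\in L^2(L^2)$, then invokes Lemma~\ref{lemma-grad-1}.

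The genuine gap is in the strong $H^2$ decay \eqref{theorem-3-conc-1}, which you correctly isolate as the crux but then try to attack with a speculative spectral/invariance-principle argument. Your premise that ``$\rho$ is not bounded in $H^1$ uniformly in $t$'' is precisely what is \emph{false} when $h\equiv h_0$, and proving it is the paper's key step. In polar coordinates one has $\rho_t=-h_0 g u_r$ and $\omega_t=\nu\Delta\omega+\frac{g}{r}\partial_\theta\rho$; testing $\partial_t\nabla\rho$ and the vorticity equation with the $r^2$-weighted quantities $r^2\nabla\rho/(-h_0)$ and $r^2\omega$ respectively, the buoyancy cross-terms combine (after repeated integration by parts using $\nabla\cdot\mathbf u=0$ and $u_r|_{r=a,b}=0$) to the harmless term $g\int r\rho u_r$, which is absorbed by the base weighted energy. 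This yields
\[
\frac{d}{dt}\int_\Omega\Big(\frac{|\nabla\rho|^2}{-h_0}+\omega^2\Big)r^2\,d\mathbf x+\nu\int_\Omega|\nabla\omega|^2 r^2\,d\mathbf x\le C\norm{\omega}_{L^2}^2,
\]
and since $\norm{\omega}_{L^2}^2\in L^1(0,\infty)$, one concludes $\nabla\rho\in L^\infty((0,\infty);L^2)$. With this in hand, the $H^2$ decay is immediate from the vorticity identity $\nu\norm{\nabla\omega}_{L^2}^2=-\int_\Omega\omega\omega_t-\frac{1}{\rho^*}\int_\Omega\omega\,\nabla\rho\cdot\nabla^\perp(gr)\le C\big(\norm{\mathbf u_t}_{H^1}+\norm{\nabla\rho}_{L^2}\big)\norm{\mathbf u}_{H^1}$: the parenthesis is uniformly bounded and $\norm{\mathbf u}_{H^1}\to0$, so $\norm{\nabla\omega}_{L^2}\to0$ and hence $\norm{\mathbf u}_{H^2}\to0$. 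The remaining conclusions \eqref{theorem-3-conc-2}--\eqref{theorem-3-conc-3} then follow exactly as you outline. In short, replace your damped-wave/spectral plan by the weighted $r^2$ estimate for $\nabla\rho$; this is the missing idea.
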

    \begin{remark}
    Our method also works for the problem \eqref{main} on bounded $C^{3}$-domain with the stress-free boundary conditions or periodic boundary conditions or purely Navier-slip boundary condition. Hence,
     \autoref{theorem-1}-\autoref{theorem-3-3} also hold for any bounded $C^{3}$-domain with the stress-free boundary conditions or periodic boundary conditions or purely Navier-slip boundary condition.
  \end{remark}

\subsection{Main technical difficulties behind the proof}
The key estimate for the proof of \autoref{theorem-1}--\ref{theorem-2} lies in controlling the norm $
\norm{\mathbf{u}}_{L^{\infty}\left((0,\infty);H^{1}(\Omega)\right)}$.
This control serves as a fundamental prerequisite not only for establishing the asymptotic behavior of $|\nabla \mathbf{u}|_{L^2}^2$, but also for that of $\norm{\mathbf{u}_t}_{L^2}^2$, $\norm{\Delta\mathbf{u}}_{L^2}^2$ and the term $\nabla P+\rho \nabla f$.

For the non-slip boundary condition on a bounded domain, 
Kukavica et al.~\cite{Kukavica2023} and Aydin et al.~\cite{Aydin2025,Aydin2025-1}
developed a method to establish the boundedness of $\norm{\mathbf{u}}_{L^{\infty}\left((0,\infty);H^{1}(\Omega)\right)}$. Their method, however, does not apply to the boundary conditions \eqref{cond-1}--\eqref{cond-2}. For the Navier-slip boundary condition on a bounded domain,
Bleitner et al.~\cite{Bleitner2025} employed a maximum principle to obtain
$\norm{\nabla^{\perp}\cdot\mathbf{u}}_{L^{\infty}\left((0,\infty);L^{p}(\Omega)\right)}$, which in turn 
yields the boundedness of
$\norm{\mathbf{u}}_{L^{\infty}\left((0,\infty);H^{1}(\Omega)\right)}$. 
However, to deduce the asymptotic convergence of $\norm{\mathbf{u}}_{H^{1}}$, one must also control the pressure term. The obstacle lies in the fact that the trace theorem does not directly provide the estimate
$\mu \int_{\partial B(0,a)}P \tau \cdot\nabla \left(\left(2a^{-1}+\frac{\alpha}{\nu}\right)u_{\tau}\right)\,ds\leq
  \norm{P}_{H^{1}}  \norm{\mathbf{u}}_{H^{1}}$
which is needed for the analysis. 
For the mixed boundary conditions \eqref{cond-1}--\eqref{cond-2}, we develop a new approach
to establish both the boundedness $\norm{\mathbf{u}}_{L^{\infty}\left((0,\infty);H^{1}(\Omega)\right)}$
and the asymptotic convergence of $\norm{\mathbf{u}}_{H^{1}}$. More precisely, we firstly obtain an energy identity, by which we further establish that
\[
\mathbf{u}\in L^{\infty}\left((0,\infty);L^2(\Omega)\right)
\cap L^{2}\left((0,\infty);H^1(\Omega)\right).
\]
Second, through a careful analysis, we derive the following key inequality:
\[
\begin{aligned}
&\int_{\Omega}\left(\rho+\rho_s\right)\mathbf{u}\cdot \nabla f\,dx\,dy+
\frac{\nu\rho^*}{2}\int_\Omega |\nabla \mathbf{u}|^2 \,dx\,dy
+\frac{\rho^*\nu}{2b}
\int_{\partial B(0,b)}\left(u_{\mathbf{\tau}}\right)^2 \,ds
\\&\quad+\frac{\rho^*}{2}
\int_{\partial B(0,a)}
\left(\nu a^{-1}+\alpha\right)\left(u_{\mathbf{\tau}}\right)^2 \,ds
 + \rho^*\int_0^t \|\mathbf{u}_s\|_{L^2}^2 \,ds 
 \\&\leq 
 \int_{\Omega}\left(\varrho_0+\rho_s\right)\mathbf{u}_0\cdot \nabla f\,dx\,dy +
\frac{\nu\rho^*}{2}\int_\Omega |\nabla \mathbf{u}_0|^2 \,dx\,dy\\
&\quad+\frac{\rho^*\nu}{2b}
\int_{\partial B(0,b)}\left(u_{\mathbf{\tau}}(0)\right)^2 \,ds
+\frac{\rho^*}{2}
\int_{\partial B(0,a)}
\left(\nu a^{-1}+\alpha\right)\left(u_{\mathbf{\tau}}(0)\right)^2 \,ds
 \\&\quad+\int_0^t
\left(
\norm{\nabla \mathbf{u}}_{L^2}^2
+\norm{\mathbf{u}}_{L^2\left(\partial B(0,b)\right)}^2
\right)
 h\left(
\norm{\nabla \mathbf{u}}_{L^2}^2
+\norm{\mathbf{u}}_{L^2\left(\partial B(0,b)\right)}^2
\right)\,ds,
 \end{aligned}
\]
where $h(z)$ is a positive function given by $h(z)=C_0\sqrt{z}+C_0z^{3/4}+C_0z+C_0$, and $C_0$ is a positive constant depending on the initial data. This key inequality enables us to establish both
$\norm{\mathbf{u}}_{L^{\infty}\left((0,\infty);H^{1}(\Omega)\right)}$
and the asymptotic convergence of $\norm{\mathbf{u}}_{H^{1}}$. We believe that
our method for controlling $\norm{\mathbf{u}}_{L^{\infty}\left((0,\infty);H^{1}(\Omega)\right)}$
is not only applicable to similar problems with a non-slip boundary condition,
but also effective for analogous problems with a purely Navier-slip boundary condition.

The key challenge in the proof of \autoref{theorem-4} is to establish the uniform boundedness in time of $\norm{\nabla \rho}_{L^2}$.
A direct estimate of $\norm{\nabla \rho}_{L^2}$ does not yield an inequality sufficient to prove its uniform boundedness.
Instead, we estimate the weighted norms $\norm{(x^2+y^2)\nabla \rho}_{L^2}$ and $\norm{(x^2+y^2)\nabla^{\perp}\cdot\mathbf{u}}_{L^2}$.
This approach allows us to derive the following inequality:
\[
\begin{aligned}
&\frac{1}{2}\frac{d}{dt}
\int_a^b\left(
\int_0^{2\pi}
\left(
\left(\partial_r\rho \right)^2+
\frac{1}{r^2} \left(\partial_{\theta}\rho\right)^2
+\omega^2
\right)
r^3\,d\theta
\right)\,dr
+\frac{d}{dt}
\int_a^b\int_0^{2\pi}
\frac{r\rho^2}{-2h_0}
\,d\theta\,dr
\\
&+\frac{\nu}{2}
\int_a^b\left(
\int_0^{2\pi}
\left(
\left(\partial_r\omega\right)^2+
\frac{1}{r^2}
\left(\partial_{\theta}\omega \right)^2
\right)
r^3\,d\theta
\right)\,dr
\leq C
\int_a^b\left(
\int_0^{2\pi}
\omega^2
r\,d\theta
\right)\,dr.
\end{aligned}
\]
From this inequality, the uniform boundedness in time of $\norm{\nabla \rho}_{L^2}$ follows.\subsection{Organization of the paper}

The rest of this article is arranged as follows: Section 2 gives regularity estimates for the Boussinesq equations. Proofs of the main theorems are given in Section 3.

 \section{Regularity Estimates}

  All $L^q$-norms of $(\rho+\rho_s)$ are conserved if $\mathbf{u}$ is sufficiently smooth, i.e. for any
  $1 \leq q < \infty$,
  \begin{align}\label{rhopp}
  \norm{\rho+\rho_s}_q= \norm{\rho_0+\rho_s}_q,
  \end{align}
  for all $t\geq 0$ if $\rho_0\in L^{q}(\Omega)$.
  In fact for the case of $q \in 2\N$, one can infer from 
the second equation of the system \eqref{213} that
  \begin{align*}
    \begin{aligned}
 \frac{d \norm{\rho+\rho_s}^q_{L^q}}{dt}&=
 \frac{d}{dt}
 \int_{\Omega}\abs{\rho+\rho_s}^q\,dx\,dy=-q
 \int_{\Omega}\abs{\rho+\rho_s}^{q-1}\mathbf{u}\cdot\nabla (\rho+\rho_s)\,dx\,dy\\&
 =- \int_{\Omega} (\rho+\rho_s)^{q}
 \mathbf{u}\cdot \mathbf{n}
 \,dx\,dy+
 \int_{\Omega} (\rho+\rho_s)^{q}
\nabla \cdot  \mathbf{u}
 \,dx\,dy=0,
   \end{aligned}
  \end{align*}
where we have used $\nabla \cdot  \mathbf{u}=0$ and the boundary condition \eqref{cond-1}.
Bearing in mind that the  $L^q$ -norm of $\rho+\rho_s$ is conserved, we obtain
\begin{align}
\norm{\rho}_{L^q}=\norm{\rho+\rho_s-\rho_s}_{L^q}
\leq \norm{\rho+\rho_s}_{L^q}+
\norm{\rho_s}_{L^q}=
\norm{\rho_0+\rho_s}_{L^q}+
\norm{\rho_s}_{L^q}<\infty .
\end{align}
This enables us to estimate the energy of the fluid, as demonstrated in the subsequent lemma.

\subsection{Stokes's Estimate }
\begin{lemma}\label{Stokes0725}[\textbf{Stokes' estimates}]. 
Suppose  $\left(\nu a^{-1}+\alpha\right)\geq 0$.
Consider the equation 
\begin{align}\label{biaozhunde0725}
\begin{cases}
\mu \Delta \mathbf{v} - \nabla P = \mathbf{f}, \\
\nabla \cdot \mathbf{v} = 0,
\end{cases}
\end{align}
subject to the boundary conditions \eqref{cond-1}-\eqref{cond-2}, where $\mathbf{f}=\left(f_{1},f_{2}\right)\in \left[L^{2}\left(\Omega\right)\right]^2$. Assume that 
$\left(\mathbf{v},P\right)\in \left[H^1\left(\Omega\right)\right]^2\times L^{2}\left(\Omega\right)$ is the weak solution to the above equation, then we have the following estimates 
\begin{align}\label{manzu0725}
    \left\|\nabla^2\mathbf{v}\right\|_{L^{2}\left(\Omega\right)}
    +\left\|\nabla P\right\|_{L^{2}\left(\Omega\right)}\leq C \left\|\mathbf{f}\right\|_{L^{2}\left(\Omega\right)}.
    \end{align}
\end{lemma}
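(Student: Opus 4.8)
The plan is to derive the clean a priori bound \eqref{manzu0725} by combining a low‑order energy estimate with the standard $H^2$‑regularity theory for the Stokes operator, the latter supplying an estimate that still carries lower‑order terms which are then absorbed. Throughout, the constant $C$ is allowed to depend only on $\mu,\nu,\alpha,a,b$ and the geometry of $\Omega$, never on $\mathbf{f}$ or on the solution $(\mathbf{v},P)$.

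First I would establish the $H^1$‑bound $\|\mathbf{v}\|_{H^1(\Omega)}\le C\|\mathbf{f}\|_{L^2(\Omega)}$. Testing the first equation of \eqref{biaozhunde0725} against $\mathbf{v}$, the pressure contribution $\int_\Omega \nabla P\cdot\mathbf{v}$ vanishes because $\nabla\cdot\mathbf{v}=0$ and $\mathbf{v}\cdot\mathbf{n}=0$ on $\partial\Omega$, while the viscous term, rewritten via the deformation tensor $\mathbb{D}(\mathbf{v})=\tfrac12(\nabla\mathbf{v}+(\nabla\mathbf{v})^{\mathrm{Tr}})$, produces $-2\mu\|\mathbb{D}(\mathbf{v})\|_{L^2}^2$ together with boundary integrals that, after inserting \eqref{cond-1}--\eqref{cond-2}, reduce to exactly the nonnegative boundary terms appearing in \eqref{s-proof-2} (the outer term on $\partial B(0,b)$ and the inner term weighted by $\nu a^{-1}+\alpha$). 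The sign hypothesis $\nu a^{-1}+\alpha\ge 0$ makes these favorable; a Korn inequality supplemented by the strictly positive outer boundary integral (which penalizes the rotational fields that lie in the kernel of $\mathbb{D}$ and are the only rigid motions compatible with $\mathbf{v}\cdot\mathbf{n}=0$) then upgrades $\|\mathbb{D}(\mathbf{v})\|_{L^2}$ to $\|\mathbf{v}\|_{H^1}$, and a Cauchy--Schwarz absorption closes the estimate. Next, normalizing $P$ to zero mean (which does not affect $\nabla P$), I would bound $\|P\|_{L^2}$ by inf‑sup: since $\nabla P=\mu\Delta\mathbf{v}-\mathbf{f}$, for every $\boldsymbol\phi\in H^1_0(\Omega)$ one has $\int_\Omega P\,\nabla\cdot\boldsymbol\phi = \mu\int_\Omega\nabla\mathbf{v}:\nabla\boldsymbol\phi + \int_\Omega\mathbf{f}\cdot\boldsymbol\phi$, so the surjectivity of the divergence (Bogovski\u{\i}) gives $\|P\|_{L^2}\le C(\|\nabla\mathbf{v}\|_{L^2}+\|\mathbf{f}\|_{L^2})\le C\|\mathbf{f}\|_{L^2}$.

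I would then upgrade to $H^2$ by localization. Using a partition of unity subordinate to a finite cover of $\overline\Omega$ by interior balls and collar neighborhoods of the two circles, interior Stokes regularity controls $\|\mathbf{v}\|_{H^2}+\|P\|_{H^1}$ on the interior pieces by $\|\mathbf{f}\|_{L^2}+\|\mathbf{v}\|_{H^1}+\|P\|_{L^2}$. Near each boundary circle I would flatten the boundary by a smooth diffeomorphism, turning the problem into a Stokes system with smooth variable coefficients carrying the stress‑free/vorticity condition on $\partial B(0,b)$ and the Navier‑slip condition \eqref{cond-2} on $\partial B(0,a)$, and then invoke the Agmon--Douglis--Nirenberg boundary estimate. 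The decisive point, which I expect to be the main obstacle, is verifying that these mixed boundary operators satisfy the Lopatinski\u{\i}--Shapiro complementing condition for the Stokes system on the flattened half‑plane: the zeroth‑order term $\alpha\,\mathbf{v}\cdot\boldsymbol\tau$ is lower order and harmless, but the coupling between the normal‑stress/pressure operator and the tangential‑stress operator must be checked explicitly so that the boundary estimate $\|\mathbf{v}\|_{H^2}+\|P\|_{H^1}\le C(\|\mathbf{f}\|_{L^2}+\|\mathbf{v}\|_{H^1}+\|P\|_{L^2})$ is valid.

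Finally, patching the local estimates and inserting the already‑established bounds $\|\mathbf{v}\|_{H^1}+\|P\|_{L^2}\le C\|\mathbf{f}\|_{L^2}$ eliminates the lower‑order terms and yields \eqref{manzu0725}. As an alternative to the ADN machinery, and closer in spirit to the vorticity computations used later in the paper, one may set $\omega=\nabla\times\mathbf{v}$, observe that $\mu\Delta\omega=\nabla\times\mathbf{f}\in H^{-1}(\Omega)$, impose $\omega=0$ on $\partial B(0,b)$ together with the Robin‑type relation on $\partial B(0,a)$ obtained by rewriting \eqref{cond-2} through $\omega$ and the boundary curvature, solve for $\omega\in H^1(\Omega)$, and recover $\mathbf{v}\in H^2(\Omega)$ and $\nabla P\in L^2(\Omega)$ by the two‑dimensional div--curl elliptic estimate using $\nabla\cdot\mathbf{v}=0$ and $\mathbf{v}\cdot\mathbf{n}=0$; in this route the obstacle migrates to handling the $\mathbf{v}$‑dependent, hence coupled, inner boundary condition for $\omega$.
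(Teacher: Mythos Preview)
Your proposal is correct, but the paper's proof proceeds quite differently. Instead of localizing and invoking Agmon--Douglis--Nirenberg with a Lopatinski\u{\i}--Shapiro verification, the paper exploits the two-dimensional annular geometry through the vorticity/stream-function formulation --- essentially your ``alternative'' route, but with the coupling you flagged dissolved in a clean way. Concretely: given the weak solution $\mathbf{v}$, the paper treats the inner boundary value $(2a^{-1}+\alpha/\nu)v_\tau\in L^2(\partial B(0,a))$ as \emph{known data}, solves the scalar Dirichlet problem $-\mu\Delta G=\partial_1 f_2-\partial_2 f_1$ with this boundary trace to obtain $\nabla G\in L^2$, then solves $\Delta\Phi=G$ with $\Phi|_{\partial\Omega}=0$ for a stream function in $H^3$, sets $\mathbf{w}=\nabla^\perp\Phi$, and finally shows $\mathbf{w}=\mathbf{v}$ by a duality/uniqueness argument for the Stokes problem with the \emph{decoupled} stress-free boundary conditions $\mathbf{H}\cdot\mathbf{n}=\nabla\times\mathbf{H}=0$ on both circles. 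Thus no ADN machinery is needed: only scalar elliptic regularity and a uniqueness step.

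On the low-order estimate, note that the paper avoids Korn entirely: its integration-by-parts identity (\autoref{lemma-grad-1-1}) produces $\|\nabla\mathbf{v}\|_{L^2}^2$ directly (not $\|\mathbb{D}(\mathbf{v})\|^2$), together with the same favorable boundary terms, and then the Poincar\'e-type inequality of \autoref{lemma-grad} yields $\|\mathbf{v}\|_{H^1}\le C\|\mathbf{f}\|_{L^2}$. Your ADN approach is more general (applicable in higher dimension and on general smooth domains), whereas the paper's construction is elementary and transparent but relies essentially on the 2D stream-function reduction and the circular geometry.
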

\begin{proof}
We will construct a solution to the following equation 
\begin{align}\label{wufadingyi1020}
\begin{cases}
-\mu\Delta \mathbf{w}+\nabla P=\mathbf{f},
\\
\nabla\cdot \mathbf{w}=0,
\\
\left(\partial_{1}w_{2}-\partial_{2}w_{1}\right)|_{r=b}=\mathbf{w}\cdot\mathbf{n}|_{r=a,b}=0,\\
\left(\partial_{1}w_{2}-\partial_{2}w_{1}\right)|_{r=a}
=\left(\frac{2}{r}+\frac{\alpha}{\nu}\right)v_{\mathbf{\tau}}|_{r=a},
\\
\left(\partial_{1}w_{2}-\partial_{2}w_{1}\right)|_{r=b}
=\left(\partial_{1}w_{2}-\partial_{2}w_{1}\right)|_{r=b},
\end{cases}
\end{align}
where $\mathbf{w}=\left(w_{1},w_{2}\right)$ and $v_{\mathbf{\tau}}=w_{2}\cos{\theta}-w_{1}\sin{\theta}$. First, we consider the following auxiliary problem 
\begin{align}\label{fuzhu11020}
\begin{cases}
-\mu\Delta G=\partial_{1}f_{2}-\partial_{2}f_{1},
\\
G|_{\partial\Omega}=
\frac{b-r}{b-a}\left(\frac{2}{r}+\frac{\alpha}{\nu}\right)v_{\theta}
:=U\in L^{2}\left(\partial\Omega\right).
\end{cases}
\end{align}
From the elliptic theory, one can conclude that there exists a weak solution 
\(G\) satisfying 
\[
\left\|\nabla G\right\|_{L^{2}\left(\Omega\right)}
\leq C\left(\left\|\mathbf{f}\right\|_{L^{2}\left(\Omega\right)}+\left\|\mathbf{v}\right\|_{H^{1}\left(\Omega\right)}\right).
\]
Furthermore, we consider another auxiliary problem 
\begin{align}\label{fuzhu21020}
\begin{cases}
\Delta\Phi=G,
\\
\Phi|_{\partial\Omega}=0.
\end{cases}
\end{align}
Similarly, from the elliptic theory, there exists a \(\Phi\) satisfying the above equation and 
\[
\left\|\nabla^3\Phi\right\|_{L^{2}\left(\Omega\right)}\leq C\left\|\nabla G\right\|_{L^{2}\left(\Omega\right)}
\leq C\left(\left\|\mathbf{f}\right\|_{L^2\left(\Omega\right)}+\left\|\mathbf{v}\right\|_{H^{1}\left(\Omega\right)}\right).
\]

Let us denote $w_{1}=-\partial_{2}\Phi,~w_{2}=\partial_{1}\Phi$,
then 
\(\nabla\cdot\mathbf{w}=0\), \(\mathbf{w}\cdot\mathbf{n}|_{r=a,b}=0\) and $\left(\partial_{1}w_{2}-\partial_{2}w_{1}\right)|_{r=b}=0$. Furthermore, 
\(\Delta\Phi=\partial_{1}w_{2}-\partial_{2}w_{1}=G\). Thus, 
\(-\mu\Delta G=-\mu\Delta\left(\partial_{1}w_{2}-\partial_{2}w_{1}\right)=\partial_{1}f_{2}-\partial_{2}f_{1}\). Then, from the field theory(an irrotational field must be  potential), there exists a \(\nabla P\) such that
\[
-\mu\Delta \mathbf{w}+\nabla P=\mathbf{f}.
\]
From the above construction, one can easily verify that \(\left(\mathbf{v},P\right)\) satisfies the equation \eqref{wufadingyi1020}. And 
\begin{align}\label{one-22}
\left\|\nabla^2\mathbf{w}\right\|_{L^{2}\left(\Omega\right)}
+\left\|\nabla\mathbf{P}\right\|_{L^{2}\left(\Omega\right)}
\leq
C\left(\left\|\mathbf{f}\right\|_{L^2\left(\Omega\right)}+\left\|\mathbf{w}\right\|_{H^{1}\left(\Omega\right)}\right).
\end{align}

It is not hard to see that
\[
\begin{aligned}
&\nu \int_{\Omega}\abs{\nabla \mathbf{u}}^2\,dx\,dy+ \red{  \frac{\nu}{b}\int_{\partial B(0,b)}\left(u_{\mathbf{\tau}}\right)^2 \,ds
}
+\int_{\partial B(0,a)}
\left(\nu a^{-1}+\alpha\right)\left(u_{\mathbf{\tau}}\right)^2 \,ds=\int_{\Omega}\mathbf{f}\cdot \mathbf{u}
 \,dx\,dy.
\end{aligned}
\]

An application of \autoref{lemma-grad}, there exist constants $C$ and $D$ such that
\[
\begin{aligned}
D\int_{\Omega}\abs{\mathbf{w}}^2\,dx\,dy\leq
&\int_{\Omega}\abs{\nabla \mathbf{w}}^2\,dx\,dy+ \red{  \frac{\nu}{2b}\int_{\partial B(0,b)}\left(w_{\mathbf{\tau}}\right)^2 \,ds
}
\\&+\frac{1}{2}\int_{\partial B(0,a)}
\left(\nu a^{-1}+\alpha\right)\left(w_{\mathbf{\tau}}\right)^2 \,ds\leq C\int_{\Omega}\abs{\mathbf{f}}^2
 \,dx\,dy,
\end{aligned}
\]
which yields that $\left\|\mathbf{w}\right\|_{H^{1}\left(\Omega\right)}
\leq C\left\|\mathbf{f}\right\|_{L^2\left(\Omega\right)}$.
This, together with \eqref{one-22}, gives \eqref{manzu0725}.

If we can show that \(\mathbf{v}=\mathbf{w}\) almost everywhere, then the proof is finished. Thus, we will show the uniqueness of solution to the following equation 
\begin{align}\label{weiyixing1020}
\begin{cases}
-\mu\Delta \mathbf{H}+\nabla \Theta =\mathbf{0},
\\
\nabla\cdot \mathbf{H}=0,
\\
\mathbf{H}\cdot\mathbf{n}|_{r=a,b}=0,~\left(\partial_{1}H_{2}-\partial_{2}H_{1}\right)|_{r=a,b}=0.
\end{cases}
\end{align}
To this end, we consider the following 
auxiliary problem 
\begin{align}\label{renyi1020}
\begin{cases}
-\mu\Delta \mathbf{Q}+\nabla \widetilde{\Theta}=\widetilde{\mathbf{F}},
\\
\nabla\cdot \mathbf{Q}=0,~
\mathbf{Q}\cdot\mathbf{n}|_{r=a,b}=0,~\left(\partial_{1}Q_{2}-\partial_{2}Q_{1}\right)|_{r=a,b}=0,
\end{cases}
\end{align}
where \(\widetilde{\mathbf{F}}\in \left[L^{2}\left(\Omega\right)\right]^2\) is arbitrary. Following the same steps for the auxiliary problems 
\eqref{fuzhu11020} and \eqref{fuzhu21020} where 
\(U\equiv 0\) and \(\mathbf{f}\) is replaced by \(\widetilde{\mathbf{F}}\), one can obtain the strong solution \(\left(\mathbf{Q},\widetilde{\Theta}\right)\in \left[H^{2}\left(\Omega\right)\right]^2\times \left[L^{2}\left(\Omega\right)\right]^2\) to the problem \eqref{renyi1020}. Multiplying \(\eqref{weiyixing1020}_{1}\) by \(\mathbf{Q}\) and 
\(\eqref{renyi1020}_{1}\) by \(\mathbf{H}\), then integrating the results over \(\Omega\) by parts, respectively, one can find that $
0=\int_{\Omega}\widetilde{\mathbf{F}}\cdot \mathbf{H}dx$,
which implies \(\mathbf{H}=\mathbf{0}\) almost everywhere since 
\(\widetilde{\mathbf{F}}\) is arbitrary. Thus, the uniqueness for the problem \eqref{weiyixing1020} holds. Therefore, we can conclude that \(\mathbf{v}=\mathbf{w}\). Thus, \(\left(\mathbf{v},P\right)\) satisfies the estimate \eqref{manzu0725}. 
\end{proof}

\subsection{Estimate of $\norm{\mathbf{u}}_{L^{\infty}\left((0,\infty);H^1(\Omega)\right)}$}

\begin{lemma}\label{ll2}
For the solution $( \mathbf{u},\rho)$ of the problem \eqref{213}-\eqref{ii-c} 
subject to the boundary conditions \eqref{cond-1}-\eqref{cond-2} and
with initial data $(\mathbf{u}_0,\rho_0)\in L^2\left(\Omega\right)\times L^2\left(\Omega\right) $,
if $\alpha \in  L^{\infty}\left(\partial B(0,a)\right)$,
 $\nabla f \in  L^{\infty}\left(\Omega\right)$
and
  $\left(\nu a^{-1}+\alpha\right)\geq 0$, we have
\begin{align}\label{result-1}
\mathbf{u}\in L^{\infty}\left((0,\infty);L^2(\Omega)\right)
\cap L^{2}\left((0,\infty);H^1(\Omega)\right).
\end{align}
\end{lemma}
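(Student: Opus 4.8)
The plan is to derive a \emph{clean energy identity} for the combined kinetic-plus-potential energy
\[
\mathcal{E}(t) := \tfrac{\rho^*}{2}\norm{\mathbf{u}}_{L^2}^2 + \int_\Omega \rho\, f\,dx\,dy
\]
and then integrate it in time. First I would record a preliminary bound: by the conservation of the $L^2$-norm of $\rho+\rho_s$ in \eqref{rhopp} (case $q=2$), one has $\norm{\rho(t)}_{L^2}\leq\norm{\rho_0+\rho_s}_{L^2}+\norm{\rho_s}_{L^2}$ for all $t$. Since $\nabla f\in L^{\infty}(\Omega)$ on the bounded connected domain forces $f\in L^{\infty}(\Omega)$, this yields a time-uniform bound $\bigl|\int_\Omega\rho f\,dx\,dy\bigr|\leq K$ for a constant $K$ depending only on the data.

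Next, I would test the momentum equation in \eqref{213} with $\rho^*\mathbf{u}$ in $L^2$. The convective term $\int(\mathbf{u}\cdot\nabla)\mathbf{u}\cdot\mathbf{u}$ and the pressure term $\int\nabla P\cdot\mathbf{u}$ both vanish by $\nabla\cdot\mathbf{u}=0$ and $\mathbf{u}\cdot\mathbf{n}|_{\partial\Omega}=0$, exactly as in the proof of \autoref{steady-lemma}. Integrating the viscous term by parts under \eqref{cond-1}--\eqref{cond-2} (the same computation as in \eqref{s-proof-2} and in the proof of \autoref{Stokes0725}) produces a dissipation $E=\nu\rho^*\norm{\nabla\mathbf{u}}_{L^2}^2 + (\text{boundary contributions on }\partial B(0,a),\partial B(0,b))$, where the boundary terms are nonnegative precisely because $\nu a^{-1}+\alpha\geq 0$, so that $E\geq\nu\rho^*\norm{\nabla\mathbf{u}}_{L^2}^2$. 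This gives $\tfrac{\rho^*}{2}\tfrac{d}{dt}\norm{\mathbf{u}}_{L^2}^2 + E = -\int_\Omega\rho\,\mathbf{u}\cdot\nabla f\,dx\,dy$. In parallel, multiplying the density equation by $f$ and integrating by parts, and using $\partial_t\rho=-\mathbf{u}\cdot\nabla(\rho+\rho_s)$, I obtain $\tfrac{d}{dt}\int_\Omega\rho f = \int_\Omega(\rho+\rho_s)\mathbf{u}\cdot\nabla f\,dx\,dy$. Adding the two relations, the cross term $\int_\Omega\rho\,\mathbf{u}\cdot\nabla f$ cancels, leaving $\tfrac{d}{dt}\mathcal{E} + E = \int_\Omega\rho_s\,\mathbf{u}\cdot\nabla f\,dx\,dy$.

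The crux is to show that the residual source $\int_\Omega\rho_s\,\mathbf{u}\cdot\nabla f\,dx\,dy$ vanishes, and this is where the structural relation $\nabla\rho_s=h\nabla f$ (the Remark after \autoref{steady-lemma}) is essential. It forces $\rho_s$ to be constant along the level curves of $f$, so $\rho_s=\Phi(f)$ for a scalar function $\Phi$, whence $\rho_s\nabla f=\nabla(G\circ f)$ with $G'=\Phi$ is an exact gradient of a single-valued function. Consequently $\int_\Omega\rho_s\mathbf{u}\cdot\nabla f=\int_\Omega\mathbf{u}\cdot\nabla(G\circ f)=0$ by $\nabla\cdot\mathbf{u}=0$ and $\mathbf{u}\cdot\mathbf{n}|_{\partial\Omega}=0$; equivalently, $\nabla\rho_s\parallel\nabla f$ makes $\rho_s\nabla f$ curl-free, hence $L^2$-orthogonal to the divergence-free fields tangent to $\partial\Omega$. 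I expect this to be the main obstacle, both in justifying the representation $\rho_s=\Phi(f)$ globally on the annulus (single-valuedness, i.e. vanishing circulation around the hole, which is automatic in the radial examples of the Remarks) and because it is exactly the point at which the possibly unstable sign of $h$ is rendered harmless—no positivity of $h$ is needed here.

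With the residual term eliminated, the identity $\tfrac{d}{dt}\mathcal{E}+E=0$ integrates to
\[
\tfrac{\rho^*}{2}\norm{\mathbf{u}(t)}_{L^2}^2 + \int_0^t E\,ds = \tfrac{\rho^*}{2}\norm{\mathbf{u}_0}_{L^2}^2 + \int_\Omega\rho_0 f\,dx\,dy - \int_\Omega\rho(t) f\,dx\,dy .
\]
The last two integrals are each bounded by $K$, so $\norm{\mathbf{u}(t)}_{L^2}^2\leq\norm{\mathbf{u}_0}_{L^2}^2 + 4K/\rho^*$ uniformly in $t$, giving $\mathbf{u}\in L^{\infty}((0,\infty);L^2(\Omega))$, and $\int_0^\infty E\,ds<\infty$, hence $\int_0^\infty\norm{\nabla\mathbf{u}}_{L^2}^2\,ds<\infty$. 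Finally, the Poincaré-type coercivity used in \autoref{Stokes0725} (via \autoref{lemma-grad}) controls $\norm{\mathbf{u}}_{L^2}^2$ by a constant multiple of $E$, so $\int_0^\infty\norm{\mathbf{u}}_{L^2}^2\,ds<\infty$ as well, yielding $\mathbf{u}\in L^2((0,\infty);H^1(\Omega))$ and completing the proof. The only routine technical care is the standard Galerkin/regularization argument justifying the formal testing for merely $L^2$ initial data.
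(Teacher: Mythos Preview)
Your approach is essentially the paper's: test the momentum equation by $\rho^*\mathbf{u}$, combine with $\tfrac{d}{dt}\int_\Omega\rho f$, and integrate the resulting energy identity, bounding $\int_\Omega\rho f$ via the conserved $L^2$-norm of $\rho+\rho_s$. The one place you work harder than necessary is the ``crux'' step $\int_\Omega\rho_s\,\mathbf{u}\cdot\nabla f=0$: the paper simply invokes the hydrostatic balance $\rho_s\nabla f=-\nabla P_s$ (which is part of the definition of the equilibrium about which you perturb), so the term is $-\int_\Omega\mathbf{u}\cdot\nabla P_s=0$ immediately---your detour through $\rho_s=\Phi(f)$ and the attendant topological worry about circulation on the annulus are unnecessary, since $P_s$ is already given as a single-valued function.
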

\begin{proof}
By multiplying the first equation of \eqref{213} by $\mathbf{u}$ and applying \autoref{lemma-grad-1-1}, we obtain
\begin{align}\label{one-proof-1}
\begin{aligned}
&\frac{d}{dt}\int_{\Omega}\rho^*\frac{\abs{\mathbf{u}}^2}{2}\,dx\,dy
+\rho^*\nu\int_{\Omega}\abs{\nabla \mathbf{u}}^2\,dx\,dy
+\frac{\rho^*\nu}{ b}\int_{\partial B(0,b)}\left(u_{\mathbf{\tau}}\right)^2 \,ds
\\&
+\int_{\partial B(0,a)}
\left(\rho^*\nu a^{-1}+\rho^*\alpha\right)\left(u_{\mathbf{\tau}}\right)^2 \,ds
=-
 \int_{\Omega}\rho
\nabla f\cdot \mathbf{u}
 \,dx\,dy.
\end{aligned}
\end{align}

We also note that 
\begin{align*}
\begin{aligned}
\frac{d}{dt} 
 \int_{\Omega}\rho f
 \,dx\,dy
&=
 \int_{\Omega}\rho
\nabla f\cdot \mathbf{u}
 \,dx\,dy+ \int_{\Omega}\rho_s
\nabla f\cdot \mathbf{u}
 \,dx\,dy\\
 &=
  \int_{\Omega}\rho
\nabla f\cdot \mathbf{u}
 \,dx\,dy- \int_{\Omega}\nabla P_s\cdot \mathbf{u}
 \,dx\,dy= \int_{\Omega}\rho
\nabla f\cdot \mathbf{u}
 \,dx\,dy.
 \end{aligned}
\end{align*}
This, together with \eqref{one-proof-1}, yields that
\begin{align}\label{one-proof-1-n}
\begin{aligned}
&\int_{\Omega}\rho^*\abs{\mathbf{u}}^2\,dx\,dy
+ 2\int_{\Omega}\rho f
 \,dx\,dy
+2\rho^*\nu\int_0^t\left(\int_{\Omega}\abs{\nabla \mathbf{u}(\tau)}^2\,dx\,dy\right)\,d\tau
\\&+\frac{2\rho^*\nu}{ b}
\int_0^t\left(
\int_{\partial B(0,b)}\left(u_{\mathbf{\tau}}\right)^2 \,ds
\right)\,d\tau
+2\rho^*\int_0^t\left(\int_{\partial B(0,a)}
\left(\nu a^{-1}+\alpha\right)\left(u_{\mathbf{\tau}}\right)^2 \,ds
\right)\,d\tau\\&=
\int_{\Omega}\rho^*\abs{\mathbf{u}_0}^2\,dx\,dy
+ 2\int_{\Omega}\rho_0 f
 \,dx\,dy.
\end{aligned}
\end{align}
With the aid of \autoref{lemma-grad}, the result
\eqref{result-1} follows from the identity \eqref{one-proof-1-n}.
\end{proof}

\subsection{Estimate of $\norm{\mathbf{u}}_{L^{\infty}\left((0,\infty);H^{1}(\Omega)\right)}$}

\begin{lemma}\label{lemma-grad-1-2}
 For the solution $( \mathbf{u},\rho)$ of the problem \eqref{213}-\eqref{ii-c} 
subject to the boundary conditions \eqref{cond-1}-\eqref{cond-2} and
with initial data $\mathbf{u}_0 \in H^{1}\left(\Omega\right) $, if $\alpha \in  L^{\infty}\left(\partial B(0,a)\right)$,
 $\nabla f \in  L^{\infty}\left(\Omega\right)$ and $\left(\nu a^{-1}+\alpha\right)\geq 0$, 
 we have
\begin{align}\label{infty}
\mathbf{u}\in L^{\infty}\left((0,\infty);H^{1}(\Omega)\right);\quad
\mathbf{u}_t\in L^{2}\left((0,\infty);L^{2}(\Omega)\right).
\end{align}
\end{lemma}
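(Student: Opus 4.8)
The plan is to test the momentum equation in \eqref{213} with $\rho^*\mathbf{u}_t$ and to turn the result into a differential inequality for the ``$H^1$‑energy'' $\norm{\nabla\mathbf{u}}_{L^2}^2$ together with the natural boundary terms, with $\norm{\mathbf{u}_t}_{L^2}^2$ surfacing as a good dissipative term. Integrating the Laplacian by parts through \autoref{lemma-grad-1-1} produces $\tfrac{\rho^*\nu}{2}\tfrac{d}{dt}\norm{\nabla\mathbf{u}}_{L^2}^2$ plus the two boundary contributions on $\partial B(0,b)$ and $\partial B(0,a)$; the latter is nonnegative since $\nu a^{-1}+\alpha\ge0$, so both can be retained on the left. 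The pressure term drops because $\nabla\cdot\mathbf{u}_t=0$ and $\mathbf{u}_t\cdot\mathbf{n}=0$ on $\partial\Omega$. The essential manipulation is on the buoyancy term $-\int_\Omega\rho\,\nabla f\cdot\mathbf{u}_t$: using the hydrostatic balance $\rho_s\nabla f=-\nabla p_s$ to replace $\rho$ by $\rho+\rho_s$ (the $\rho_s$ part integrates to zero after an integration by parts), and then the transport identity $\partial_t(\rho+\rho_s)=-(\mathbf{u}\cdot\nabla)(\rho+\rho_s)$, I would rewrite it as $-\tfrac{d}{dt}\int_\Omega(\rho+\rho_s)\,\mathbf{u}\cdot\nabla f+\int_\Omega(\rho+\rho_s)\,\mathbf{u}\cdot\nabla(\mathbf{u}\cdot\nabla f)$. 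Moving the exact time derivative to the left produces precisely the energy functional of the key inequality; crucially it is bounded below by $c\,\norm{\nabla\mathbf{u}}_{L^2}^2$ up to an additive constant, because $|\int_\Omega(\rho+\rho_s)\mathbf{u}\cdot\nabla f|\le\norm{\rho+\rho_s}_{L^2}\norm{\mathbf{u}}_{L^2}\norm{\nabla f}_{L^\infty}$ is uniformly bounded by conservation of $\norm{\rho+\rho_s}_{L^2}$ (see \eqref{rhopp}) and the $L^\infty(L^2)$ bound from \autoref{ll2}.

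It remains to estimate the two remainders. For the convection term I would use $|\int_\Omega(\mathbf{u}\cdot\nabla)\mathbf{u}\cdot\mathbf{u}_t|\le\norm{\mathbf{u}\cdot\nabla\mathbf{u}}_{L^2}\norm{\mathbf{u}_t}_{L^2}$ and absorb $\tfrac{\rho^*}{4}\norm{\mathbf{u}_t}_{L^2}^2$ on the left; the difficulty is then to control $\norm{\mathbf{u}\cdot\nabla\mathbf{u}}_{L^2}^2$ with no a priori grip on second derivatives. Here the idea is to feed the momentum equation into the stationary estimate \autoref{Stokes0725}: viewing $\nu\Delta\mathbf{u}-\tfrac1{\rho^*}\nabla P=\mathbf{u}_t+(\mathbf{u}\cdot\nabla)\mathbf{u}+\tfrac{\rho}{\rho^*}\nabla f$ as a Stokes system yields $\norm{\nabla^2\mathbf{u}}_{L^2}\le C(\norm{\mathbf{u}_t}_{L^2}+\norm{\mathbf{u}\cdot\nabla\mathbf{u}}_{L^2}+\norm{\rho}_{L^2})$. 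Combining this with the two–dimensional interpolation inequalities (Ladyzhenskaya $\norm{v}_{L^4}^2\le C\norm{v}_{L^2}\norm{v}_{H^1}$ and an Agmon‑type $L^\infty$ bound), the conservation of $\norm{\rho+\rho_s}_{L^q}$ and the uniform $L^2$ bound on $\mathbf{u}$, I would solve the resulting implicit algebraic inequality for $\norm{\mathbf{u}\cdot\nabla\mathbf{u}}_{L^2}$; the buoyancy remainder is treated analogously by Hölder and the same interpolation. After a final round of Young's inequalities to reabsorb $\norm{\mathbf{u}_t}_{L^2}^2$, all bad contributions collapse into the stated form $\int_0^t\!\big(\norm{\nabla\mathbf{u}}_{L^2}^2+\norm{\mathbf{u}}_{L^2(\partial B(0,b))}^2\big)\,h\big(\norm{\nabla\mathbf{u}}_{L^2}^2+\norm{\mathbf{u}}_{L^2(\partial B(0,b))}^2\big)\,ds$ with $h(z)=C_0(\sqrt z+z^{3/4}+z+1)$.

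With the key inequality in hand, the conclusion follows from a nonlinear Gronwall (Osgood–Bihari) argument rather than a linear one. Writing $y(t)=\norm{\nabla\mathbf{u}}_{L^2}^2+\norm{\mathbf{u}}_{L^2(\partial B(0,b))}^2$ and letting $E(t)$ be the energy functional on the left, the inequality reads $E'(t)\le y(t)\,h(y(t))$, while \autoref{ll2} and the trace theorem guarantee $\int_0^\infty y(s)\,ds<\infty$. Since $E(t)$ is bounded below and above by $y(t)$ up to additive constants and $h$ is increasing, I would bound $h(y)\le\Phi(E)$ with $\Phi$ of the same (at most linear) leading growth while keeping the prefactor $y(t)$ as an integrable weight, so that $\int_{E(0)}^{E(t)}\Phi(E)^{-1}\,dE\le\int_0^t y(s)\,ds$. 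Because $\Phi$ grows at most linearly, $\int^\infty\Phi(E)^{-1}\,dE=\infty$, and the finiteness of $\int_0^\infty y$ then forces $E(t)$—hence $\norm{\nabla\mathbf{u}}_{L^2}$—to stay uniformly bounded for all time, giving $\mathbf{u}\in L^\infty((0,\infty);H^1(\Omega))$. Reinserting this uniform bound into the energy identity and integrating in time then yields $\int_0^\infty\norm{\mathbf{u}_t}_{L^2}^2\,ds<\infty$, i.e. $\mathbf{u}_t\in L^2((0,\infty);L^2(\Omega))$.

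The main obstacle is the estimate of $\norm{\mathbf{u}\cdot\nabla\mathbf{u}}_{L^2}$ in the absence of independent control on $\norm{\nabla^2\mathbf{u}}_{L^2}$; this is exactly why the Stokes estimate \autoref{Stokes0725} must be applied self‑consistently and the arising implicit inequality solved for the nonlinearity. A second, more conceptual point is that the algebraic shape of $h$—its leading growth being exactly linear—is not incidental but calibrated precisely so that Osgood's divergence condition $\int^\infty\Phi^{-1}=\infty$ holds; any superlinear growth would yield only finite‑time boundedness and would break the global‑in‑time, large‑data conclusion. One must also track the mixed free‑slip / Navier‑slip boundary terms throughout, verifying that they carry the correct nonnegative sign under $\nu a^{-1}+\alpha\ge0$ so that they may be kept on the left‑hand side.
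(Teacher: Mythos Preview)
Your proposal is correct and follows essentially the same route as the paper: testing by $\rho^*\mathbf{u}_t$ via \autoref{lemma-grad-1-1}, rewriting the buoyancy term as $-\partial_t\int_\Omega(\rho+\rho_s)\mathbf{u}\cdot\nabla f+\int_\Omega(\rho+\rho_s)\mathbf{u}\cdot\nabla(\mathbf{u}\cdot\nabla f)$, closing the convection term self-consistently through the Stokes estimate \autoref{Stokes0725} and Ladyzhenskaya interpolation to reach the key inequality with $h(z)=C_0(\sqrt z+z^{3/4}+z+1)$, and then concluding by an Osgood--Bihari argument using $\int_0^\infty y\,ds<\infty$ from \autoref{ll2} and the divergence of $\int^\infty h^{-1}$. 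The only cosmetic difference is that the paper first passes to the integral inequality $y(t)\le C_0+\int_0^t y\,h(y)\,ds$ in terms of $y$ alone (absorbing the bounded cross term $\int_\Omega(\rho+\rho_s)\mathbf{u}\cdot\nabla f$ into $C_0$) before running Bihari, whereas you run Osgood on $E$ and bound $h(y)\le\Phi(E)$; both are equivalent.
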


\begin{proof}
Using \autoref{lemma-grad-1-1},
testing $\eqref{213}_1$ by $\rho^*\mathbf{u}_t$ and integrating over $\Omega$, we have
\begin{align}\label{nablau-l2}
\begin{aligned}
&\nu\rho^*\frac{1}{2} \frac{d}{dt} \int_\Omega |\nabla \mathbf{u}|^2 dx 
+\frac{\rho^*}{2} \frac{d}{dt}
\int_{\partial B(0,a)}
\left(\nu a^{-1}+\alpha\right)\left(u_{\mathbf{\tau}}\right)^2 \,ds
\\&+\frac{\rho^*\nu}{2b} \frac{d}{dt}
\int_{\partial B(0,b)}\left(u_{\mathbf{\tau}}\right)^2 \,ds
+ \rho^*  \|\mathbf{u}_t\|_{L^2}^2 \\
&\leq C\norm{\mathbf{u}}_{L^4}^2
\norm{\nabla \mathbf{u}}_{L^4}^2-\int_{\Omega}\rho \mathbf{u}_{t}\cdot \nabla f\,dx\\
&\leq C\norm{\mathbf{u}}_{L^4}^2
\norm{\nabla \mathbf{u}}_{L^2}\norm{\nabla^2 \mathbf{u}}_{L^2}
+C\norm{\mathbf{u}}_{L^4}^2
\norm{\nabla \mathbf{u}}_{L^2}^2
-\int_{\Omega}\rho \mathbf{u}_{t}\cdot \nabla f\,dx:=I_1+I_2+I_3.
\end{aligned}
\end{align}
To estimate the term \( I_1 \), it is necessary to control the \( L^2 \)-norm of $\nabla^2 \mathbf{u}$. Using \autoref{Stokes0725}, we have
\[
\begin{aligned}
\norm{\nabla^2 \mathbf{u}}_{L^2}
&\leq C\norm{\mathbf{u}_t}_{L^2}
+C\norm{\mathbf{u}}_{L^4}
\norm{\nabla \mathbf{u}}_{L^4}
+C\norm{\rho  \nabla f}_{L^2}\\
&\leq C\norm{\mathbf{u}_t}_{L^2}
+C\norm{\mathbf{u}}_{L^4}
\norm{\mathbf{\nabla u}}_{L^2}^{\frac{1}{2}}
\left(\norm{\nabla^2 \mathbf{u}}_{L^2}
^{\frac{1}{2}}
+\norm{\nabla\mathbf{u}}_{L^2}^{\frac{1}{2}}
\right)
+C\norm{\rho  \nabla f}_{L^2}\\
&\leq C\norm{\mathbf{u}_t}_{L^2}
+C\norm{\mathbf{u}}_{L^4}^2
\norm{\nabla \mathbf{u}}_{L^2}+
1/2\norm{\nabla^2 \mathbf{u}}_{L^2}
\\&\quad+C\norm{\mathbf{u}}_{L^4}
\norm{\nabla \mathbf{u}}_{L^2}
+\norm{\rho  \nabla f}_{L^2}.
\end{aligned}
\]
This implies that 
\begin{align}\label{delta-u}
\begin{aligned}
\norm{\nabla^2 \mathbf{u}}_{L^2}
&\leq C\norm{\mathbf{u}_t}_{L^2}
+C\norm{\mathbf{u}}_{L^4}^2
\norm{\nabla \mathbf{u}}_{L^2}
+C\norm{\mathbf{u}}_{L^4}
\norm{\nabla \mathbf{u}}_{L^2}
+\norm{\rho  \nabla f}_{L^2}.
\end{aligned}
\end{align}
With the aid of \autoref{lemma-grad}, we have
\begin{align}
\norm{\mathbf{u}}_{L^4}^2\leq 
C\norm{\mathbf{u}}_{L^2}
\norm{\mathbf{u}}_{H^1}\leq C
\norm{\mathbf{u}}_{L^2}\sqrt{
\norm{\nabla \mathbf{u}}_{L^2}^2
+\norm{\mathbf{u}}_{L^2\left(\partial B(0,b)\right)}^2}.
\end{align}
We deduce the following bound for \( I_2 \):
\begin{align}\label{ln-nablau-2}
\norm{\mathbf{u}}_{L^4}^2\norm{\nabla \mathbf{u}}_{L^2}^2\leq
C\norm{\mathbf{u}}_{L^2}\left(
\sqrt{
\norm{\nabla \mathbf{u}}_{L^2}^2
+\norm{\mathbf{u}}_{L^2\left(\partial B(0,b)\right)}^2}
\right)^3.
\end{align}
This implies that 
\begin{align}\label{ln-nablau-2-2}
\begin{aligned}
I_1\leq & \frac{\rho^*}{2}\norm{\mathbf{u}_t}_{L^2}^2
+C\norm{\mathbf{u}}_{L^4}^4
\norm{\nabla \mathbf{u}}_{L^2}^2+C\norm{\mathbf{u}}_{L^4}^3
\norm{\nabla \mathbf{u}}_{L^2}^2
+C_0\norm{\mathbf{u}}_{L^4}^2
\norm{\nabla \mathbf{u}}_{L^2}
\\\leq & \frac{\rho^*}{2}\norm{\mathbf{u}_t}_{L^2}^2
+C 
\norm{\mathbf{u}}_{L^2}^2
\left(
\norm{\nabla \mathbf{u}}_{L^2}^2
+\norm{\mathbf{u}}_{L^2\left(\partial B(0,b)\right)}^2
\right)^2
\\&
+C 
\norm{\mathbf{u}}_{L^2}^{3/2}
\left(
\norm{\nabla \mathbf{u}}_{L^2}^2
+\norm{\mathbf{u}}_{L^2\left(\partial B(0,b)\right)}^2
\right)^{7/4}
+C 
\norm{\mathbf{u}}_{L^2}
\left(
\norm{\nabla \mathbf{u}}_{L^2}^2
+\norm{\mathbf{u}}_{L^2\left(\partial B(0,b)\right)}^2
\right).
\end{aligned}
\end{align}

To handle \( I_3 \), we simplify its expression via \(\eqref{213}_2\), deviating from the direct use of Hölder's inequality employed in prior studies:
\begin{align}\label{I_3}
\begin{aligned}
-\int_{\Omega}\rho \mathbf{u}_{t}\cdot \nabla f\,dx\,dy
&=-\partial_t
\int_{\Omega}\left(\rho+\rho_s\right)\mathbf{u}\cdot \nabla f\,dx\,dy
+
\int_{\Omega}\rho_t\mathbf{u}\cdot \nabla f\,dx\,dy \\
&=-\partial_t
\int_{\Omega}\left(\rho+\rho_s\right)\mathbf{u}\cdot \nabla f\,dx\,dy
\\&\quad-\int_{\Omega}\left(\mathbf{u}\cdot \nabla \left(\rho+\rho_s\right)\right)
\left(
\ \mathbf{u}\cdot \nabla f\right)\,dx\,dy \\
&=-\partial_t
\int_{\Omega}\left(\rho+\rho_s\right)\mathbf{u}\cdot \nabla f\,dx\,dy
+\int_{\Omega}\left(\rho+\rho_s\right)\mathbf{u}\cdot
\nabla\left(
\ \mathbf{u}\cdot \nabla f\right)\,dx\,dy .
\end{aligned}
\end{align}

Substituting \eqref{delta-u}-\eqref{I_3} into \eqref{nablau-l2}, we have
\begin{align}\label{new-asymptotical}
\begin{aligned}
&\partial_t
\int_{\Omega}\left(\rho+\rho_s\right) \mathbf{u}\cdot \nabla f\,dx\,dy+
\nu\rho^*\frac{1}{2} \frac{d}{dt} \int_\Omega |\nabla \mathbf{u}|^2\,dx\,dy
\\&
+\frac{\rho^*\nu}{2b} \frac{d}{dt}
\int_{\partial B(0,b)}\left(u_{\mathbf{\tau}}\right)^2 \,ds
+\frac{\rho^*}{2} \frac{d}{dt}
\int_{\partial B(0,a)}
\left(\nu a^{-1}+\alpha\right)\left(u_{\mathbf{\tau}}\right)^2 \,ds
+ \frac{\rho^*}{2} \|\mathbf{u}_t\|_{L^2}^2 \\ \le &
C 
\norm{\mathbf{u}}_{L^2}^2
\left(
\norm{\nabla \mathbf{u}}_{L^2}^2
+\norm{\mathbf{u}}_{L^2\left(\partial B(0,b)\right)}^2
\right)^2
+C 
\norm{\mathbf{u}}_{L^2}^{3/2}
\left(
\norm{\nabla \mathbf{u}}_{L^2}^2
+\norm{\mathbf{u}}_{L^2\left(\partial B(0,b)\right)}^2
\right)^{7/4}
\\&+C 
\norm{\mathbf{u}}_{L^2}
\left(
\norm{\nabla \mathbf{u}}_{L^2}^2
+\norm{\mathbf{u}}_{L^2\left(\partial B(0,b)\right)}^2
\right)+C\norm{\mathbf{u}}_{L^2}\left(
\sqrt{
\norm{\nabla \mathbf{u}}_{L^2}^2
+\norm{\mathbf{u}}_{L^2\left(\partial B(0,b)\right)}^2}
\right)^3
\\&+C\left(
\norm{\nabla \mathbf{u}}_{L^2}^2
+\norm{\mathbf{u}}_{L^2\left(\partial B(0,b)\right)}^2
\right)\\ \le &
C_0
\left(
\norm{\nabla \mathbf{u}}_{L^2}^2
+\norm{\mathbf{u}}_{L^2\left(\partial B(0,b)\right)}^2
\right)^2+C_0
\left(
\norm{\nabla \mathbf{u}}_{L^2}^2
+\norm{\mathbf{u}}_{L^2\left(\partial B(0,b)\right)}^2
\right)\\
&+C_0
\left(
\norm{\nabla \mathbf{u}}_{L^2}^2
+\norm{\mathbf{u}}_{L^2\left(\partial B(0,b)\right)}^2
\right)^{7/4}+C_0\left(
\sqrt{
\norm{\nabla \mathbf{u}}_{L^2}^2
+\norm{\mathbf{u}}_{L^2\left(\partial B(0,b)\right)}^2}
\right)^3.
\end{aligned}
\end{align}
Integrating the preceding inequality in time over the interval $[0, t]$, we obtain
\begin{align}\label{nablau-l2-detail}
\begin{aligned}
&\int_{\Omega}\left(\rho+\rho_s\right)\mathbf{u}\cdot \nabla f\,dx\,dy+
\frac{\nu\rho^*}{2}\int_\Omega |\nabla \mathbf{u}|^2 \,dx\,dy
+\frac{\rho^*\nu}{2b}
\int_{\partial B(0,b)}\left(u_{\mathbf{\tau}}\right)^2 \,ds
\\&+\frac{\rho^*}{2}
\int_{\partial B(0,a)}
\left(\nu a^{-1}+\alpha\right)\left(u_{\mathbf{\tau}}\right)^2 \,ds
 + \rho^*\int_0^t \|\mathbf{u}_s\|_{L^2}^2 \,ds 
 \\&\leq 
 \int_{\Omega}\left(\varrho_0+\rho_s\right)\mathbf{u}_0\cdot \nabla f\,dx\,dy +
\frac{\nu\rho^*}{2}\int_\Omega |\nabla \mathbf{u}_0|^2 \,dx\,dy\\
&+\frac{\rho^*\nu}{2b}
\int_{\partial B(0,b)}\left(u_{\mathbf{\tau}}(0)\right)^2 \,ds
+\frac{\rho^*}{2}
\int_{\partial B(0,a)}
\left(\nu a^{-1}+\alpha\right)\left(u_{\mathbf{\tau}}(0)\right)^2 \,ds
 \\&+\int_0^t
\left(
\norm{\nabla \mathbf{u}}_{L^2}^2
+\norm{\mathbf{u}}_{L^2\left(\partial B(0,b)\right)}^2
\right)
 h\left(
\norm{\nabla \mathbf{u}}_{L^2}^2
+\norm{\mathbf{u}}_{L^2\left(\partial B(0,b)\right)}^2
\right)\,ds,
 \end{aligned}
\end{align}
where $h(z)$ is a positive function given by
\[
\begin{aligned}
h(z)=C_0\sqrt{z}+C_0z^{3/4}+C_0z+C_0.
 \end{aligned}
\]

Using the energy estimate \eqref{result-1}, inequality \eqref{nablau-l2-detail} implies
\begin{align}\label{nablau-l2-detail-3}
\int_\Omega |\nabla \mathbf{u}|^2  d\,dx\,dy
+
\int_{\partial B(0,b)}\left(u_{\mathbf{\tau}}\right)^2 \,ds
 \leq C_0 + \int_0^t \norm{\nabla \mathbf{u}}_{L^2}^2  h\left( \norm{\nabla \mathbf{u}}_{L^2}^2 \right) ds.
\end{align}
To facilitate the analysis, we introduce the following notation:
\[
y(t) = \int_\Omega |\nabla \mathbf{u}|^2  d\,dx\,dy
+
\int_{\partial B(0,b)}\left(u_{\mathbf{\tau}}\right)^2 \,ds=g(t) , \quad w(z) = \frac{2}{\nu\rho^*}  h(z).
\]
In terms of these functions, inequality \eqref{nablau-l2-detail-3} can be rewritten as
\[
y(t) \leq C_0 + \int_0^t g(s)  w(y(s))  ds:=\tilde{y}(t).
\]

Note that
\[
\tilde{y}'=g(t)  w(y(t)) =y(t)  w(y(t)) 
\leq y(t)  w(\tilde{y}(t)) ,
\]
from which we get that
\[
\frac{\tilde{y}'}{w(\tilde{y}(t)) }\leq y(t)\Rightarrow
\int_0^t\frac{\tilde{y}'(\tau)}{w(\tilde{y}(\tau)) }\,d\tau
=\int_{y(0)}^{y(t)}
\frac{1}{w(z) }\,dz
\leq \int_0^t y(\tau )\,d\tau.
\]

Define the function $G(z) = \int_{C_0}^z \frac{1}{w(s)}  ds$.
We then get that
\[
G(y(t))-G(y(0))\leq \int_0^t y(\tau )\,d\tau.
\]

A direct verification shows that $\int_{C_0}^{+\infty} \frac{1}{w(s)}  ds = +\infty$.
This means that
\[
y(t)\leq G^{-1}\left(
G(y(0))+\int_0^t y(\tau )\,d\tau
\right).
\]
Therefore, an application of
\eqref{one-proof-1-n} yields
\[
\begin{aligned}
& \int_\Omega |\nabla \mathbf{u}|^2  d\,dx\,dy
+\int_{\partial B(0,b)}\left(u_{\mathbf{\tau}}\right)^2 \,ds
\\&\leq G^{-1}\left(G(C_0)+\int_0^t y(s)\,ds\right)
\\&=G^{-1}\left(
\int_0^t\left(
\norm{\nabla \mathbf{u}}_{L^2}^2
+\norm{\mathbf{u}}_{L^2\left(\partial B(0,b)\right)}^2
\right)\,d\tau
\right)
\\&\leq G^{-1}\left(
\int_0^{+\infty}\left(
\norm{\nabla \mathbf{u}}_{L^2}^2
+\norm{\mathbf{u}}_{L^2\left(\partial B(0,b)\right)}^2
\right)\,d\tau
\right)<+\infty.
\end{aligned}
\]
This, combining with \eqref{nablau-l2-detail}, yields \eqref{infty}.

\end{proof}

\subsection{Estimate of $\norm{\mathbf{u}_t}_{ L^{2}\left((0,\infty);H^{1}(\Omega)\right)}$
and  $\norm{\mathbf{u}_t}_{L^{\infty}\left((0,\infty;,L^{2}(\Omega)\right)}$}

\begin{lemma}\label{utestimate}
For the problem \eqref{213}-\eqref{ii-c} subject to the boundary conditions \eqref{cond-1}-\eqref{cond-2} and with initial data $(\mathbf{u}_0,\rho_0) \in H^{2}\left(\Omega\right)
 \times L^{4}\left(\Omega\right)$, if $\alpha \in  W^{1,\infty}\left(\partial B(0,a)\right)$,
 $\nabla f \in  W^{1,\infty}\left(\Omega\right)$ and $\left(\nu a^{-1}+\alpha\right)\geq 0$,
we have
\begin{align}\label{tul2h1}
\mathbf{u}_t \in L^{\infty}\left((0,\infty);L^{2}(\Omega)\right)
\cap L^{2}\left((0,\infty);H^{1}(\Omega)\right).
\end{align}
\end{lemma}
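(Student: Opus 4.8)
The plan is to differentiate the momentum equation $\eqref{213}_{1}$ in time and test the result against $\rho^*\mathbf{u}_t$ in $L^2(\Omega)$, closing a Grönwall inequality for $y(t):=\|\mathbf{u}_t\|_{L^2}^2$. Since $\alpha$ and $f$ are time-independent, $\mathbf{u}_t$ satisfies boundary conditions of exactly the form \eqref{cond-1}--\eqref{cond-2} with $(\mathbf{u},P)$ replaced by $(\mathbf{u}_t,P_t)$, so \autoref{lemma-grad-1-1} applies to the pair $(\mathbf{u}_t,P_t)$ verbatim. Differentiating gives
\[
\mathbf{u}_{tt}+(\mathbf{u}_t\cdot\nabla)\mathbf{u}+(\mathbf{u}\cdot\nabla)\mathbf{u}_t=\nu\Delta\mathbf{u}_t-\frac{1}{\rho^*}\nabla P_t-\frac{\rho_t}{\rho^*}\nabla f,
\]
and pairing with $\rho^*\mathbf{u}_t$ yields
\[
\frac{\rho^*}{2}\frac{d}{dt}\|\mathbf{u}_t\|_{L^2}^2+\nu\rho^*\|\nabla\mathbf{u}_t\|_{L^2}^2+\mathcal{B}(\mathbf{u}_t)=-\rho^*\!\int_{\Omega}(\mathbf{u}_t\cdot\nabla)\mathbf{u}\cdot\mathbf{u}_t\,dx\,dy-\int_{\Omega}\rho_t\,\nabla f\cdot\mathbf{u}_t\,dx\,dy,
\]
where $\mathcal{B}(\mathbf{u}_t)=\frac{\rho^*\nu}{b}\int_{\partial B(0,b)}((u_t)_{\mathbf{\tau}})^2\,ds+\int_{\partial B(0,a)}(\rho^*\nu a^{-1}+\rho^*\alpha)((u_t)_{\mathbf{\tau}})^2\,ds\ge 0$ by the hypothesis $\nu a^{-1}+\alpha\ge 0$. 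The transport self-interaction $\int_{\Omega}(\mathbf{u}\cdot\nabla)\mathbf{u}_t\cdot\mathbf{u}_t$ and the interior pressure term $\int_{\Omega}\nabla P_t\cdot\mathbf{u}_t$ both vanish because $\nabla\cdot\mathbf{u}=\nabla\cdot\mathbf{u}_t=0$ and $\mathbf{u}\cdot\mathbf{n}=\mathbf{u}_t\cdot\mathbf{n}=0$ on $\partial\Omega$.

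For the convective term I would use the two-dimensional Ladyzhenskaya inequality (with the boundary-corrected Poincaré bound of \autoref{lemma-grad}) to write $|\int_{\Omega}(\mathbf{u}_t\cdot\nabla)\mathbf{u}\cdot\mathbf{u}_t|\le\|\mathbf{u}_t\|_{L^4}^2\|\nabla\mathbf{u}\|_{L^2}\le\epsilon\|\nabla\mathbf{u}_t\|_{L^2}^2+C_{\epsilon}\|\nabla\mathbf{u}\|_{L^2}^2\|\mathbf{u}_t\|_{L^2}^2$, which after absorbing $\epsilon\|\nabla\mathbf{u}_t\|_{L^2}^2$ into the dissipation contributes a term $C(t)\,y(t)$ with $C(t)=C\|\nabla\mathbf{u}\|_{L^2}^2$. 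The density term must be handled without any appearance of $\nabla\rho$, since only $\rho\in L^{\infty}((0,\infty);L^4)$ is available through the conservation \eqref{rhopp}; substituting $\rho_t=-\mathbf{u}\cdot\nabla(\rho+\rho_s)$ and integrating by parts off $\rho+\rho_s$ gives
\[
-\int_{\Omega}\rho_t\,\nabla f\cdot\mathbf{u}_t\,dx\,dy=-\int_{\Omega}(\rho+\rho_s)\Big[\mathbf{u}\cdot(\nabla^2 f)\mathbf{u}_t+\nabla f\cdot\big((\mathbf{u}\cdot\nabla)\mathbf{u}_t\big)\Big]\,dx\,dy,
\]
the boundary contribution dropping out since $\mathbf{u}\cdot\mathbf{n}=0$. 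Using $\nabla f,\nabla^2 f\in L^{\infty}$, Hölder against the conserved norm $\|\rho+\rho_s\|_{L^4}$, and Young's inequality, the second bracket is absorbed into $\epsilon\|\nabla\mathbf{u}_t\|_{L^2}^2$, while the first is bounded by $C\|\rho+\rho_s\|_{L^4}\|\mathbf{u}\|_{L^4}\|\mathbf{u}_t\|_{L^2}$; choosing the Young split small and absorbing the resulting small multiple of $\|\mathbf{u}_t\|_{L^2}^2$ into $\mathcal{B}(\mathbf{u}_t)+\nu\rho^*\|\nabla\mathbf{u}_t\|_{L^2}^2$ via the coercivity of \autoref{lemma-grad} leaves a pure forcing $F(t)=C\|\rho+\rho_s\|_{L^4}^2\|\mathbf{u}\|_{L^4}^2$.

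Collecting the estimates produces a differential inequality $y'(t)+\nu\rho^*\|\nabla\mathbf{u}_t\|_{L^2}^2\le C(t)\,y(t)+F(t)$ in which the crucial point is that both $C(t)$ and $F(t)$ are integrable on $(0,\infty)$: \autoref{ll2} gives $\mathbf{u}\in L^2((0,\infty);H^1)$, hence $C\in L^1$, and the two-dimensional embedding $H^1(\Omega)\hookrightarrow L^4(\Omega)$ yields $\|\mathbf{u}\|_{L^4}^2\le C\|\mathbf{u}\|_{H^1}^2$, so $F\in L^1$ as well. The initial value $y(0)=\|\mathbf{u}_t(0)\|_{L^2}^2$ is finite because evaluating $\eqref{213}_{1}$ at $t=0$ and using $\mathbf{u}_0\in H^2$, $\rho_0\in L^4$, $\nabla f\in W^{1,\infty}$ together with the Stokes bound \autoref{Stokes0725} for $\nabla P_0$ shows $\mathbf{u}_t(0)\in L^2$. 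Grönwall's lemma then gives $\sup_{t}y(t)\le(y(0)+\|F\|_{L^1})\exp(\|C\|_{L^1})<\infty$, which is the $L^{\infty}((0,\infty);L^2)$ conclusion; integrating the differential inequality once more and inserting this uniform bound controls $\nu\rho^*\int_0^{\infty}\|\nabla\mathbf{u}_t\|_{L^2}^2\,dt$, giving $\mathbf{u}_t\in L^2((0,\infty);H^1)$ after again invoking \autoref{lemma-grad}.

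The main obstacle is twofold. First, one must justify the time differentiation and verify that $\mathbf{u}_t$ genuinely solves the time-differentiated boundary-value problem, so that \autoref{lemma-grad-1-1} assembles the viscous and pressure-boundary contributions into the nonnegative dissipation $\nu\rho^*\|\nabla\mathbf{u}_t\|_{L^2}^2+\mathcal{B}(\mathbf{u}_t)$; this is where $\alpha\in W^{1,\infty}$ and the time-independence of $\alpha,f$ enter, and at a rigorous level it is cleanest to differentiate a Galerkin approximation (or use difference quotients in time) and pass to the limit. Second, the density forcing must be controlled without $\nabla\rho$, which is precisely why the integration-by-parts rewriting above—rather than a direct Hölder estimate—is indispensable, and why the hypothesis $\nabla f\in W^{1,\infty}$ (furnishing $\nabla^2 f\in L^{\infty}$) is needed.
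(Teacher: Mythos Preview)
Your approach is correct and matches the paper's proof in every essential respect: differentiate the momentum equation in time, test with $\rho^*\mathbf{u}_t$, use \autoref{lemma-grad-1-1} for the viscous term, control the convective term via Ladyzhenskaya, and---crucially---eliminate $\nabla\rho$ from the density forcing by substituting $\rho_t=-\mathbf{u}\cdot\nabla(\rho+\rho_s)$ and integrating by parts onto $\mathbf{u}\cdot\nabla(\nabla f\cdot\mathbf{u}_t)$, which is exactly where the hypothesis $\nabla f\in W^{1,\infty}$ enters. The only cosmetic differences are that the paper shifts to the total density (so writes $\rho$ where you write $\rho+\rho_s$) and bounds $\|\mathbf{u}_t(0)\|_{L^2}$ by testing $\eqref{213}_1$ at $t=0$ against $\mathbf{u}_t$ (which kills the pressure directly) rather than invoking the Stokes estimate; your route via Leray projection/\autoref{Stokes0725} is equally valid.
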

\begin{proof} We replace $\rho$ and $P$ by $\rho-\rho_s$ and $P-P_s$ in the system \eqref{213}, where $\rho_s$ and $P_s$ solve
$\nabla P_s=-\rho_s\nabla f$. Then, by differentiating the corresponding equations and the conditions \eqref{cond-1}-
\eqref{cond-2} with respect to $t$, we observe that $\frac{\partial \mathbf{u}}{\partial t}$ satisfies the following system 
   \begin{align}\label{ut-1}
\begin{cases}
\mathbf{u}_{tt}+ (\mathbf{u}_t \cdot \nabla )\mathbf{u}
+ (\mathbf{u}\cdot \nabla )\mathbf{u}_t
= \nu \Delta \mathbf{u}_t-
\frac{1}{\rho^*} \nabla P_t-\frac{\rho_t}{\rho^*}\nabla f,
 \\ \rho_t+(\mathbf{u}\cdot  \nabla)\rho
 =0,
\\ \nabla \cdot  \mathbf{u}_t=0.
\end{cases}
\end{align}
The boundary conditions for the system \eqref{ut-1} are as follows:
  \begin{align}\label{cond-1-1}
  &\mathbf{u}_t \cdot   \mathbf{n}|_{x^2+y^2=a^2}=0,\quad
  \mathbf{u}_t \cdot   \mathbf{n}|_{x^2+y^2=b^2}=
  \nabla \times  \mathbf{u}_t
  =0,\\ \label{cond-2-1}
   &\left[\left(-\frac{P_t}{\rho^*}\mathbf{I} +\nu\left (\nabla \mathbf{u}_t+\left(\nabla \mathbf{u}_t\right)^{Tr} \right)\right)\cdot\mathbf{n}\right]
   \cdot   \mathbf{\tau}|_{x^2+y^2=a^2}
  +\alpha  \mathbf{u}_t  \cdot   \mathbf{\tau}|_{x^2+y^2=a^2}=0.
  \end{align}

Taking the $L^2$-inner product of \eqref{ut-1} with $\mathbf{u}_t $ and using
  \autoref{lemma-grad-1-1}, we infer that
  \begin{align}\label{ut-proof-1}
\begin{aligned}
&\frac{d}{dt}\int_{\Omega}\rho^*\frac{\abs{\mathbf{u}_t}^2}{2}\,dx\,dy
+\rho^*\nu\int_{\Omega}\abs{\nabla \mathbf{u}_t}^2\,dx\,dy
\\&
+\rho^*\int_{\partial B(0,a)}\left(\nu a^{-1}+\alpha\right)\left(\tau \cdot \mathbf{u}_t\right)^2 \,ds+
\frac{\rho^*\nu}{b}\int_{\partial B(0,b)}
\left(\tau \cdot \mathbf{u}_t\right)^2 \,ds
\\  &=- \int_{\Omega}\rho_t
\nabla f\cdot \mathbf{u}_t
 \,dx\,dy- \int_{\Omega}\rho
\mathbf{u}_t\cdot (\mathbf{u}_t\cdot\nabla )\mathbf{u}
 \,dx\,dy.
\end{aligned}
\end{align}
Substituting the second equation of \eqref{ut-1} into \eqref{ut-proof-1} and performing integration by parts gives
    \begin{align}\label{ut-proof-2}
\begin{aligned}
&\frac{d}{dt}\int_{\Omega}\rho^*\frac{\abs{\mathbf{u}_t}^2}{2}\,dx\,dy
+\nu\rho^*\int_{\Omega}\abs{\nabla \mathbf{u}_t}^2\,dx\,dy
\\&
+\red{\frac{\nu\rho^*}{b}\int_{\partial B(0,b)}\left(\tau \cdot \mathbf{u}_t\right)^2 \,ds}
+\rho^*\int_{\partial B(0,a)}\left(\nu a^{-1}+\alpha\right)\left(\tau \cdot \mathbf{u}_t\right)^2 \,ds
\\&=-
 \int_{\Omega}\rho ( \mathbf{u}\cdot\nabla)
(\nabla f\cdot \mathbf{u}_t)
 \,dx\,dy- \int_{\Omega}
\mathbf{u}_t\cdot (\mathbf{u}_t\cdot\nabla )\mathbf{u}
 \,dx\,dy.
\end{aligned}
\end{align}
By H\"older's, Ladyzhenskaya's and Young's inequalities, one deduces that
    \begin{align}\label{ut-proof-3}
&\begin{aligned}
 \int_{\Omega}\abs{
\mathbf{u}_t\cdot (\mathbf{u}_t\cdot\nabla )\mathbf{u}}
 \,dx\,dy\leq &\norm{
 \nabla \mathbf{u}}_{L^2}
 \norm{ \mathbf{u}_t}^2_{L^4}\leq
C\norm{
 \nabla \mathbf{u}}_{L^2}\norm{\mathbf{u}_t}_{L^2}\sqrt{
\norm{\nabla \mathbf{u}_t}_{L^2}^2
+\norm{\mathbf{u}_t}_{L^2\left(\partial B(0,b)\right)}^2}\\
 \leq &\epsilon
 \norm{\nabla \mathbf{u}_t}_{L^2}^2
 +\epsilon\norm{\mathbf{u}_t}_{L^2\left(\partial B(0,b)\right)}^2
 +D^{\epsilon}_0
  \norm{
 \nabla \mathbf{u}}^2_{L^2}
  \norm{
\mathbf{u}_t}^2_{L^2},
\end{aligned} \\ &\label{ut-proof-4}
\begin{aligned}
 \int_{\Omega}\abs{\rho ( \mathbf{u}\cdot\nabla)
(\nabla f\cdot \mathbf{u}_t)}
 \,dx\,dy\leq& C\norm{\nabla f}_{W^{1,\infty}}
   \norm{\rho}_{L^4} \norm{\mathbf{u}}_{L^4}
\norm{\nabla \mathbf{u}_t}_{L^2}\\
   &\leq \epsilon \norm{\nabla \mathbf{u}_t}^2_{L^2}
   +C_{\epsilon} \norm{\rho}^2_{L^4} 
   \norm{\mathbf{u}}_{L^2}\sqrt{
\norm{\nabla \mathbf{u}}_{L^2}^2
+\norm{\mathbf{u}}_{L^2\left(\partial B(0,b)\right)}^2}.
\end{aligned}
\end{align}

By choosing $\epsilon$ sufficiently small, we combine the results from \eqref{ut-proof-3} and \eqref{ut-proof-4} with \eqref{ut-proof-2} to derive the following conclusion:
  \begin{align}\label{ut-proof-5}
\begin{aligned}
&\frac{d}{dt}\norm{\mathbf{u}_t}^2_{L^2}
+\nu C_a^b \norm{\mathbf{u}_t}^2_{L^2}
\leq
D_{0}
\norm{\nabla \mathbf{u}}^2_{L^2}
  \norm{
\mathbf{u}_t}^2_{L^2}+
D_{0}  \left(
\norm{\nabla \mathbf{u}}_{L^2}^2
+\norm{\mathbf{u}}_{L^2\left(\partial B(0,b)\right)}^2\right)
\end{aligned}
\end{align}
which, combined with Gronwall's inequality, yields
  \begin{align}\label{ut-proof-6}
\begin{aligned}
\norm{\mathbf{u}_t}^2_{L^2}
\leq &\norm{\mathbf{u}_t(0)}^2_{L^2}e^{D_{0}\int_0^t
\left(
\norm{\nabla \mathbf{u}}_{L^2}^2
+\norm{\mathbf{u}}_{L^2\left(\partial B(0,b)\right)}^2\right)\,ds}
\\  &+D_{0} \norm{\rho_0}^2_{L^4} \int_0^t e^{D_{0}\int_s^t
\left(
\norm{\nabla \mathbf{u}}_{L^2}^2
+\norm{\mathbf{u}}_{L^2\left(\partial B(0,b)\right)}^2\right)\,ds}
\norm{\nabla \mathbf{u}(s)}^2_{L^2}\,ds.
\end{aligned}
\end{align}

To bound $\norm{\mathbf{u}_t(0)}^2_{L^2}$, we multiply the first equation of \eqref{213} with $\mathbf{u}_t$ in $L^2$ to get
\begin{align*}
\begin{aligned}
&\int_{\Omega}\rho^*\abs{\mathbf{u}_t}^2\,dx\,dy
-\mu\int_{\Omega}\Delta \mathbf{u}\cdot  \mathbf{u}_t\,dx\,dy
+\rho^*\int_{\Omega} \mathbf{u}_t\cdot( \mathbf{u}\cdot \nabla )\mathbf{u}\,dx\,dy
= -\int_{\Omega}\rho \nabla f\cdot \mathbf{u}_t \,dx\,dy,
\end{aligned}
\end{align*}
From this, it follows that
  \begin{align*}
\begin{aligned}
\norm{\mathbf{u}_t}^2_{L^2}
\leq \nu \norm{\mathbf{u}_t}_{L^2}
\norm{\Delta \mathbf{u}}_{L^2}
+ \norm{\mathbf{u}_t}_{L^2}
\norm{( \mathbf{u}\cdot \nabla )\mathbf{u}}_{L^2}
+\frac{\norm{\nabla f}_{L^{\infty}}}{\rho^*}
\norm{\rho_0}_{L^2}
\norm{\mathbf{u}_t}_{L^2},
\end{aligned}
\end{align*}
which gives
  \begin{align*}
\begin{aligned}
\norm{\mathbf{u}_t(0)}_{L^2}\leq
\nu\norm{\mathbf{u}_0}_{H^2}
+ \norm{\mathbf{u}_0}_{H^2}
\norm{\mathbf{u}_0}_{H^1}
+\frac{\norm{\nabla f}_{L^{\infty}}}{\rho^*}
\norm{\rho_0}_{L^2}.
\end{aligned}
\end{align*}
By the above inequality, \autoref{ll2} and \eqref{ut-proof-6}, one has
  \begin{align}\label{ut-proof-13}
\begin{aligned}
\mathbf{u}_t \in L^{\infty}\left((0,\infty),L^{2}(\Omega)\right).
\end{aligned}
\end{align}

By integrating \eqref{ut-proof-1} with respect to time, and using \autoref{lemma-grad-1-2} and \eqref{ut-proof-13} it follows that
\[
\mathbf{u}_t \in L^{\infty}\left((0,\infty);L^{2}(\Omega)\right)
\cap L^{2}\left((0,\infty);H^{1}(\Omega)\right).
\]
\end{proof}

\subsection{Estimate of $\norm{\mathbf{u}}_{ L^{\infty}\left((0,\infty);H^{2}(\Omega)\right)}$}

\begin{lemma}\label{21336}
For the problem \eqref{213}-\eqref{ii-c} subject to the boundary conditions \eqref{cond-1}-\eqref{cond-2} and
with initial data $(\mathbf{u}_0,\rho_0) \in H^{2}\left(\Omega\right)
 \times L^{4}\left(\Omega\right)$, if $\alpha \in  W^{1,\infty}\left(\partial B(0,a)\right)$,
 $\nabla f \in  W^{1,\infty}\left(\Omega\right)$ and $\left(\nu a^{-1}+\alpha\right)\geq 0$,
we get
\begin{align}
\mathbf{u} \in L^{\infty}\left((0,\infty),H^{2}(\Omega)\right).
\end{align}
\end{lemma}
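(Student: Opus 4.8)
The plan is to treat the momentum equation as a stationary Stokes system whose forcing collects the time derivative, the convective nonlinearity, and the buoyancy force, and then to invoke the Stokes estimate of \autoref{Stokes0725}. Multiplying $\eqref{213}_1$ by $\rho^*$ and rearranging gives
\begin{align*}
\nu\rho^*\Delta\mathbf{u}-\nabla P = \rho^*\mathbf{u}_t + \rho^*(\mathbf{u}\cdot\nabla)\mathbf{u} + \rho\nabla f =: \mathbf{f}.
\end{align*}
Since $\mathbf{u}$ satisfies the boundary conditions \eqref{cond-1}-\eqref{cond-2}, \autoref{Stokes0725} applies with $\mu=\nu\rho^*$ and yields $\norm{\nabla^2\mathbf{u}}_{L^2}+\norm{\nabla P}_{L^2}\leq C\norm{\mathbf{f}}_{L^2}$. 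It therefore suffices to bound $\norm{\mathbf{f}}_{L^2}$ uniformly in $t$. Of its three parts, $\norm{\mathbf{u}_t}_{L^2}$ is uniformly bounded by \autoref{utestimate}, while the buoyancy term obeys $\norm{\rho\nabla f}_{L^2}\leq\norm{\nabla f}_{L^{\infty}}\norm{\rho}_{L^2}$, which is finite because the $L^2$-norm of $\rho+\rho_s$ is conserved (see \eqref{rhopp}) and $\nabla f\in L^{\infty}$.

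The main obstacle is the convective term $(\mathbf{u}\cdot\nabla)\mathbf{u}$, since the natural bound $\norm{(\mathbf{u}\cdot\nabla)\mathbf{u}}_{L^2}\leq\norm{\mathbf{u}}_{L^4}\norm{\nabla\mathbf{u}}_{L^4}$ reintroduces second-order information through $\norm{\nabla\mathbf{u}}_{L^4}$. The remedy is the two-dimensional Ladyzhenskaya interpolation
\begin{align*}
\norm{\nabla\mathbf{u}}_{L^4}\leq C\norm{\nabla\mathbf{u}}_{L^2}^{1/2}\left(\norm{\nabla\mathbf{u}}_{L^2}^{1/2}+\norm{\nabla^2\mathbf{u}}_{L^2}^{1/2}\right),
\end{align*}
which produces a factor $\norm{\nabla^2\mathbf{u}}_{L^2}^{1/2}$ in the bound for $\mathbf{f}$. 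Feeding this into the Stokes estimate yields a term $C\norm{\mathbf{u}}_{L^4}\norm{\nabla\mathbf{u}}_{L^2}^{1/2}\norm{\nabla^2\mathbf{u}}_{L^2}^{1/2}$ on the right, and Young's inequality splits it as $\tfrac12\norm{\nabla^2\mathbf{u}}_{L^2}+C\norm{\mathbf{u}}_{L^4}^2\norm{\nabla\mathbf{u}}_{L^2}$, allowing the half to be absorbed into the left-hand side.

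After absorption we are left with an estimate of the schematic form
\begin{align*}
\norm{\nabla^2\mathbf{u}}_{L^2}\leq C\left(\norm{\mathbf{u}_t}_{L^2}+\norm{\mathbf{u}}_{L^4}^2\norm{\nabla\mathbf{u}}_{L^2}+\norm{\mathbf{u}}_{L^4}\norm{\nabla\mathbf{u}}_{L^2}+\norm{\rho\nabla f}_{L^2}\right),
\end{align*}
whose right-hand side is uniformly bounded in time: $\norm{\mathbf{u}_t}_{L^2}$ by \autoref{utestimate}; $\norm{\nabla\mathbf{u}}_{L^2}$ and $\norm{\mathbf{u}}_{L^4}$ (through the embedding $H^1(\Omega)\hookrightarrow L^4(\Omega)$) by \autoref{lemma-grad-1-2}; and $\norm{\rho\nabla f}_{L^2}$ as above. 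Combining this uniform $\norm{\nabla^2\mathbf{u}}_{L^2}$ bound with the uniform $H^1$ bound of \autoref{lemma-grad-1-2} gives $\mathbf{u}\in L^{\infty}((0,\infty);H^2(\Omega))$.
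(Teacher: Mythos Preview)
Your proposal is correct and follows essentially the same route as the paper: the schematic inequality you obtain after absorption is precisely inequality \eqref{delta-u}, which the paper had already derived in the proof of \autoref{lemma-grad-1-2} by the same Stokes estimate plus Ladyzhenskaya interpolation and absorption. The paper's proof of \autoref{21336} simply cites that inequality (in the squared form \eqref{H-2}) and plugs in the uniform bounds on $\norm{\mathbf{u}_t}_{L^2}$, $\norm{\nabla\mathbf{u}}_{L^2}$, $\norm{\mathbf{u}}_{L^4}$, and $\norm{\rho\nabla f}_{L^2}$ from \autoref{lemma-grad-1-2} and \autoref{utestimate}, exactly as you do.
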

\begin{proof}
Based on \autoref{lemma-grad-1-2} and \autoref{utestimate}, we conclude that both $\norm{\nabla \mathbf{u}}_{L^2}$ and $\norm{\mathbf{u}_t}_{L^2}$ are uniformly bounded in time. Then, appealing to \autoref{Stokes0725}, we find that
\begin{align}\label{H-2}
\begin{aligned}
\norm{ \mathbf{u}}_{H^2}^2
&\leq C\norm{\mathbf{u}_t}_{L^2}^2
+C\norm{\mathbf{u}}_{L^4}^4
\norm{\nabla \mathbf{u}}_{L^2}^2
\\&\quad+C\norm{\mathbf{u}}_{L^4}^2
\norm{\nabla \mathbf{u}}_{L^2}^2
+\norm{\rho  \nabla f}_{L^2}^2
\\&\leq 
C\norm{\mathbf{u}_t}_{L^2}^2
+C_0
\left(
\norm{\nabla \mathbf{u}}_{L^2}^2
+\norm{\mathbf{u}}_{L^2\left(\partial B(0,b)\right)}^2
\right)^2
\\&+C_0\left(\sqrt{
\norm{\nabla \mathbf{u}}_{L^2}^2
+\norm{\mathbf{u}}_{L^2\left(\partial B(0,b)\right)}^2}
\right)^3+\norm{\rho  \nabla f}_{L^2}^2<+\infty.
\end{aligned}
\end{align}
\end{proof}

\subsection{Estimate of $\norm{\mathbf{u}}_{ L^{2}\left((0,T);H^{3}(\Omega)\right)}$}

\begin{lemma}\label{lemma-37}
For the problem \eqref{213}-\eqref{ii-c} subject to the boundary conditions \eqref{cond-1}-\eqref{cond-2} and with initial data $(\mathbf{u}_0,\rho_0) \in H^{3}\left(\Omega\right)
 \times W^{1,q}\left(\Omega\right)$, If $\alpha \in  W^{1,\infty}\left(\partial B(0,a)\right)$,
 $\nabla f \in  W^{1,\infty}\left(\Omega\right)$ and $\left(\nu a^{-1}+\alpha\right)\geq 0$,
then for any $T>0$, we have
\begin{align}
\mathbf{u} \in L^{2}\left((0,T);H^{3}(\Omega)\right),\quad
 \rho \in L^{\infty}\left((0,T);W^{1,q}(\Omega)\right).
\end{align}
\end{lemma}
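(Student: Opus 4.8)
The plan is to close a single coupled estimate for $\norm{\mathbf{u}}_{H^3}$ and $\norm{\nabla\rho}_{L^q}$ at once, since neither bound is available on its own: promoting $\mathbf{u}$ to $H^3$ requires $\rho\nabla f\in H^1$ (hence $\nabla\rho\in L^2$), whereas the $W^{1,q}$-bound on $\rho$ is driven by $\norm{\nabla\mathbf{u}}_{L^\infty}$, which needs one derivative more than the uniform $H^2$-bound of \autoref{21336}. First I would rewrite the momentum equation $\eqref{213}_1$ as a stationary Stokes system $\nu\Delta\mathbf{u}-\frac{1}{\rho^*}\nabla P=\mathbf{F}$ with $\mathbf{F}:=\mathbf{u}_t+(\mathbf{u}\cdot\nabla)\mathbf{u}+\frac{\rho}{\rho^*}\nabla f$ and $\nabla\cdot\mathbf{u}=0$, and apply a one-order-higher version of \autoref{Stokes0725} (legitimate under the assumed regularity of $\alpha$ and $\nabla f$) to get
\[
\norm{\mathbf{u}}_{H^3}\leq C\norm{\mathbf{F}}_{H^1}\leq C\norm{\mathbf{u}_t}_{H^1}+C\norm{\mathbf{u}}_{H^2}^2+C\norm{\nabla f}_{W^{1,\infty}}\left(\norm{\rho}_{L^2}+\norm{\nabla\rho}_{L^2}\right).
\]
By \autoref{21336} the factor $\norm{\mathbf{u}}_{H^2}$ is bounded in time, and by the conservation law \eqref{rhopp} so is $\norm{\rho}_{L^2}$; hence $\norm{\mathbf{u}}_{H^3}\leq C\norm{\mathbf{u}_t}_{H^1}+C_0+C\norm{\nabla\rho}_{L^2}$.

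For the density I would exploit that $\tilde\rho:=\rho+\rho_s$ is transported, $\partial_t\tilde\rho+\mathbf{u}\cdot\nabla\tilde\rho=0$. Applying $\nabla$, testing the result against $\abs{\nabla\tilde\rho}^{q-2}\nabla\tilde\rho$, and using $\nabla\cdot\mathbf{u}=0$ together with $\mathbf{u}\cdot\mathbf{n}|_{\partial\Omega}=0$ to annihilate the transport term, I obtain
\[
\frac{d}{dt}\norm{\nabla\tilde\rho}_{L^q}\leq\norm{\nabla\mathbf{u}}_{L^\infty}\norm{\nabla\tilde\rho}_{L^q}.
\]
The crux is the control of $\norm{\nabla\mathbf{u}}_{L^\infty}$, and here a direct power interpolation $\norm{\nabla\mathbf{u}}_{L^\infty}\leq C\norm{\nabla\mathbf{u}}_{L^2}^{1/2}\norm{\mathbf{u}}_{H^3}^{1/2}$ is fatal: combined with the $H^3$-estimate above it produces, for $R(t):=\int_0^t\norm{\nabla\mathbf{u}}_{L^\infty}\,ds$, a Riccati-type inequality $R'\lesssim e^{R/2}+(\text{integrable})$ that blows up in finite time. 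Instead I would use the two-dimensional logarithmic (Brezis–Gallouet) inequality
\[
\norm{\nabla\mathbf{u}}_{L^\infty}\leq C\norm{\mathbf{u}}_{H^2}\left(1+\sqrt{\log\left(e+\norm{\mathbf{u}}_{H^3}\right)}\right),
\]
which, fed the $H^3$-estimate and the uniform $H^2$-bound, yields $\norm{\nabla\mathbf{u}}_{L^\infty}\leq C_0\bigl(1+\sqrt{\log(e+\norm{\mathbf{u}_t}_{H^1}+\norm{\nabla\rho}_{L^2})}\bigr)$.

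Then I would close the loop. With $N(t)=\norm{\nabla\tilde\rho}_{L^q}$ the transport inequality gives $\log N(t)\leq\log N(0)+R(t)$, while $\norm{\nabla\rho}_{L^2}\leq CN(t)$ and the subadditivity of $\sqrt{\log(e+\cdot)}$ turn the logarithmic bound into $R'(t)=\norm{\nabla\mathbf{u}}_{L^\infty}\lesssim 1+\sqrt{\log^+\norm{\mathbf{u}_t}_{H^1}}+\sqrt{R(t)}$. Since the right-hand side grows only like $\sqrt{R}$, the differential inequality forces $R$ to grow at most quadratically on any finite interval; the forcing is integrable because $\mathbf{u}_t\in L^2((0,\infty);H^1)$ from \autoref{utestimate} gives $\int_0^T\sqrt{\log^+\norm{\mathbf{u}_t}_{H^1}}\,ds\leq C\int_0^T(1+\norm{\mathbf{u}_t}_{H^1})\,ds<\infty$. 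Hence $R(T)<\infty$, so $N(t)=\norm{\nabla(\rho+\rho_s)}_{L^q}$ is bounded on $[0,T]$ and, with $\norm{\nabla\rho_s}_{L^q}<\infty$, this gives $\rho\in L^\infty((0,T);W^{1,q}(\Omega))$.

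Finally, feeding $\norm{\nabla\rho}_{L^2}\in L^\infty((0,T))$ back into the $H^3$-estimate leaves $\norm{\mathbf{u}}_{H^3}\leq C\norm{\mathbf{u}_t}_{H^1}+C_0$; squaring and integrating over $[0,T]$ and again using $\mathbf{u}_t\in L^2((0,T);H^1)$ yields $\int_0^T\norm{\mathbf{u}}_{H^3}^2\,ds<\infty$, i.e. $\mathbf{u}\in L^2((0,T);H^3(\Omega))$. The remaining assertion $\rho\in L^\infty((0,T);W^{1,s})$ for $1\leq s\leq q$ is immediate from the embedding $W^{1,q}(\Omega)\hookrightarrow W^{1,s}(\Omega)$ on the bounded domain $\Omega$. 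I expect the main obstacle to be exactly the supercritical coupling of the second paragraph: the transport structure genuinely requires $\norm{\nabla\mathbf{u}}_{L^\infty}$, and only the logarithmic refinement of the Sobolev embedding—rather than any power interpolation—prevents finite-time blow-up of the Gronwall exponent, making essential use of the previously established $\mathbf{u}\in L^\infty(H^2)$ and $\mathbf{u}_t\in L^2(H^1)$ bounds.
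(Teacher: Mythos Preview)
Your proposal is correct and shares the paper's essential mechanism: a logarithmic Sobolev inequality for $\norm{\nabla\mathbf{u}}_{L^\infty}$ is coupled to the transport estimate $\frac{d}{dt}\norm{\nabla\tilde\rho}_{L^q}\leq\norm{\nabla\mathbf{u}}_{L^\infty}\norm{\nabla\tilde\rho}_{L^q}$, and the loop is closed by a log-type Gronwall argument using the already-established bounds $\mathbf{u}\in L^\infty(H^2)$ and $\mathbf{u}_t\in L^2(H^1)$.

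The implementation, however, differs. The paper does not invoke an $H^3$ Stokes estimate. Instead it passes to the vorticity $\omega=\nabla^\perp\!\cdot\mathbf{u}$, whose boundary values are $\omega|_{r=b}=0$ and $\omega|_{r=a}=-(2a^{-1}+\alpha/\nu)u_\tau$, tests the vorticity equation with $\Delta\omega$, and derives a differential inequality of the form $X'+Y\leq AX+BX\log(1+Y)$ with $X\sim\norm{\nabla\omega}_{L^2}^2+\norm{\nabla\rho}_{L^q}^q$ and $Y\sim\norm{\Delta\omega}_{L^2}^2$; this is closed via the ready-made logarithmic Gronwall lemma (\autoref{lemma-grad-3}), and $\mathbf{u}\in L^2(H^3)$ is then read off from the stream-function estimate $\norm{\nabla\mathbf{u}}_{H^2}\leq C(\norm{\Delta\omega}_{L^2}+\norm{\mathbf{u}}_{H^2})$. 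Your route is more direct---a static Stokes estimate plus an ODE for $R(t)=\int_0^t\norm{\nabla\mathbf{u}}_{L^\infty}$---and the Brezis--Gallouet form you use yields a clean $R'\lesssim 1+\sqrt{R}$ after absorbing the exponential of $R$ inside the logarithm. What the paper's vorticity route buys is that it never needs the higher-order Stokes estimate under the Navier-slip condition: only the $H^2$ version (\autoref{Stokes0725}) is proved here, and with just $\alpha\in W^{1,\infty}(\partial B(0,a))$ the $H^3$ analogue you invoke is not entirely free (the boundary coefficient must multiply the $H^{3/2}$ trace), whereas the vorticity boundary terms are handled with a single factor $\norm{\phi}_{W^{1,\infty}}$ via the trace bound $\int_{\partial\Omega}|\cdot|\leq C\norm{\cdot}_{W^{1,1}(\Omega)}$. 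If you keep your approach, that Stokes step deserves an explicit justification.
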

\begin{proof}
We replace $\rho$ and $P$ by $\rho-\rho_s$ and $P-P_s$ in the system \eqref{213}, where $\rho_s$ and $P_s$ solve
$\nabla P_s=-\rho_s\nabla f$.  Let us denote $\omega=\nabla^{\perp}\cdot\mathbf{u}=( -\partial_y,\partial_x)\cdot\mathbf{u}$.
After a straightforward calculation, it can be seen tha $\omega$ solves
   \begin{align}\label{three-proof--3}
\begin{cases}
\frac{\partial \omega}{\partial t}+ (\mathbf{u} \cdot \nabla )\omega = \nu \Delta \omega
+\frac{1}{\rho^*}\nabla\rho \cdot \nabla^{\perp}f ,\\
\omega=0,\quad x^2+y^2=b^2,\\
\omega=-\phi u_{\tau}. \quad x^2+y^2=a^2,\\
\omega|_{t=0}= \omega_0 ,
\end{cases}
\end{align}
where $\phi $ is given by $\phi:=2a^{-1}+\frac{\alpha}{\nu}$,
 and $\eqref{three-proof--3}_2-\eqref{three-proof--3}_3$ are derived from \eqref{cond-1} and
\begin{align*}
\omega=\partial_xu_2-\partial_yu_1=
\frac{\partial u_{\tau}}{\partial \mathbf{n}}-\frac{u_{\tau}}{r}
+\frac{1}{r}\frac{\partial u_{\mathbf{n} }}{\partial \tau}
\end{align*}
with $\mathbf{n}$ and $\tau$ on $x^2+y^2=a^2$ being given by $
\mathbf{n}=-(\cos\theta,\sin\theta),\quad \tau
=(\sin\theta,-\cos\theta)$.

If we multiply the corresponding \eqref{three-proof--3} by $\Delta \omega$ in $L^2$, we obtain
\begin{align}\label{proof-37-1}
\begin{aligned}
\int_{\Omega}\omega_t \Delta \omega\,dx\,dy
+
\int_{\Omega} \Delta \omega (\mathbf{u} \cdot \nabla )\omega \,dx\,dy=
 \nu  \int_{\Omega} \left(\Delta \omega\right)^2 \,dx\,dy  +\frac{1}{\rho^*}
  \int_{\Omega} \Delta \omega\left(\nabla\rho \cdot \nabla^{\perp}f \right) \,dx\,dy.
\end{aligned}
\end{align}
The first term on the left hand side of \eqref{proof-37-1} can be rewritten as
\begin{align}\label{proof-37-2}
\begin{aligned}
\int_{\Omega}\omega_t
\Delta \omega\,dx\,dy
=-\int_{\Omega}\nabla \omega_t
\cdot\nabla \omega\,dx\,dy+
\int_{\partial\Omega}\omega_t \mathbf{n}
\cdot\nabla \omega\,dx\,dy,
\end{aligned}
\end{align}
which combined with \eqref{proof-37-1} implies
\begin{align}\label{proof-37-3}
\begin{aligned}
\frac{1}{2}\frac{d}{dt}\norm{\nabla \omega}_{L^2}^2
+\nu \norm{\Delta \omega}_{L^2}^2
=&
\int_{\partial\Omega}\omega_t \mathbf{n}
\cdot\nabla \omega\,dx\,dy
+
\int_{\Omega}
\Delta \omega
 (\mathbf{u} \cdot \nabla )\omega
 \,dx\,dy\\&
 -\frac{1}{\rho^*}
  \int_{\Omega}
\Delta \omega\left(\nabla\rho \cdot \nabla^{\perp}f \right)
 \,dx\,dy.
\end{aligned}
\end{align}

Keeping in mind that
\begin{align*}
\begin{aligned}
&\omega_t=0, \quad x^2+y^2=b^2;\quad \omega_t=-\phi \mathbf{u_{t}}\cdot \tau, \quad x^2+y^2=a^2,
\end{aligned}
\end{align*}
we infer after a straightforward calculation that
\begin{align}\label{proof-37-5}
\begin{aligned}
\frac{1}{2}\frac{d}{dt}\norm{\nabla \omega}_{L^2}^2 +\nu \norm{\Delta \omega}_{L^2}^2
= & -
\int_{\partial B(0,a)} (\phi \mathbf{u_{t}}\cdot \tau) \mathbf{n}
\cdot\nabla \omega\,dx\,dy
+
\int_{\Omega} \Delta \omega (\mathbf{u} \cdot \nabla )\omega  \,dx\,dy
 \\  &   -\frac{1}{\rho^*} \int_{\Omega}
\Delta \omega\left(\nabla\rho \cdot \nabla^{\perp}f \right) \,dx\,dy=L_1+L_2+L_3,
\end{aligned}
\end{align}
where the terms $L_j$ can be bounded as follows.
\begin{align}\label{proof-37-6}
\begin{aligned}
L_1&\leq C\norm{\phi}_{W^{1,\infty}}
\norm{(\mathbf{u_{t}}\cdot \tau)
\mathbf{n}
\cdot\nabla \omega}_{W^{1,1}}
\\&\leq C\norm{\phi}_{W^{1,\infty}}\norm{\mathbf{u_{t}}}_{H^{1}}
\norm{\nabla \omega }_{H^{1}}\leq \epsilon \norm{\nabla \omega }_{H^{1}}^2+
C_{\epsilon }\norm{\mathbf{u_{t}}}_{H^{1}}^2,
\end{aligned}
\end{align}
while by virtue of Holder's and Young's inequalities, and Ladyzhenskaya's interpolation inequality,
\begin{align}\label{proof-37-7}
\begin{aligned}
L_2&\leq
\norm{\mathbf{u}}_{L^4}
\norm{\nabla \omega}_{L^4}
\norm{\Delta \omega}_{L^2}
\leq C\norm{\mathbf{u}}_{H^1}
\norm{\nabla \omega}_{H^1}^{\frac{1}{2}}
\norm{\nabla \omega}_{L^2}^
{\frac{1}{2}}\norm{\Delta \omega}_{L^2}\\
&\leq
C\norm{\mathbf{u}}_{H^1}
\norm{\nabla \omega}_{H^1}^{\frac{3}{2}}
\norm{\nabla \omega}_{L^2}^
{\frac{1}{2}}\leq
 \epsilon \norm{\nabla \omega }_{H^{1}}^2
+C_{\epsilon}\norm{\mathbf{u}}_{H^1}^4
\norm{\nabla \omega}_{L^2}^2.
\end{aligned}
\end{align}

Recalling that
\[
 \norm{\nabla \omega }_{H^{1}}^2\leq C\left(
  \norm{\Delta \omega }_{L^{2}}^2
  +\norm{\mathbf{u}}_{H^{2}}^2
 \right),
\]
we see that
\begin{align}\label{proof-37-9}
\begin{aligned}
L_1+L_2&\leq  \epsilon C \norm{\Delta \omega }_{L^{2}}^2
+
C_{\epsilon }C\norm{\mathbf{u}}_{H^{2}}^2
+
C_{\epsilon }\norm{\mathbf{u_{t}}}_{H^{1}}^2
+C_{\epsilon}\norm{\mathbf{u}}_{H^1}^4
\norm{\mathbf{u}}_{H^2}^2.
\end{aligned}
\end{align}
As for $L_3$, we have
\begin{align}\label{proof-37-10}
\begin{aligned}
L_3&\leq
\norm{f}_{L^{\infty}}
\norm{\Delta \omega}_{L^{2}}
\norm{\nabla \rho}_{L^{2}}\leq
\epsilon
\norm{\Delta \omega}_{L^{2}}^2+
C_{\epsilon}\norm{\nabla \rho}_{L^{2}}^2\\
\leq &
\epsilon
\norm{\Delta \omega}_{L^{2}}^2
+
C_{\epsilon}
+C_{\epsilon}\norm{ \nabla \rho}_{L^{q}}^q,\quad q\geq 2.
\end{aligned}
\end{align}

Combining \eqref{proof-37-5} with \eqref{proof-37-9}-\eqref{proof-37-10}, and choosing $\epsilon$ sufficiently small, we conclude
\begin{align}\label{proof-37-12}
\begin{aligned}
\frac{1}{2}\frac{d}{dt}\norm{\nabla \omega}_{L^2}^2
+\frac{\nu}{2} \norm{\Delta \omega}_{L^2}^2
&\leq
C_{\epsilon }\norm{\mathbf{u}}_{H^{2}}^2
+
C_{\epsilon }\norm{\mathbf{u_{t}}}_{H^{1}}^2
\\&\quad+C_{\epsilon}\norm{\mathbf{u}}_{H^1}^4
\norm{\mathbf{u}}_{H^2}^2
+C_{\epsilon}
+C_{\epsilon}\norm{\nabla \rho}_{L^{q}}^q.
\end{aligned}
\end{align}

Now, we take the gradient of the second equation of \eqref{213} with $\rho$ replaced by $\rho-\rho_s$,
and multiply $\abs{\nabla \rho}^{q-2}\nabla \rho$ to find that
\begin{align}\label{proof-37-13}
\begin{aligned}
\frac{1}{q}\frac{d}{dt}\norm{\nabla \rho}_q^q
=&-\int_{\Omega}\abs{\nabla \rho}^{q-2}\nabla \rho \cdot
(\nabla \rho\cdot\nabla)\mathbf{u}\,dx\,dy-\int_{\Omega}\abs{\nabla \rho}^{q-2}\nabla \rho \cdot
(\mathbf{u} \cdot\nabla)\nabla \rho\,dx
\,dy\\&=
-\int_{\Omega}\abs{\nabla \rho}^{q-2}\nabla \rho \cdot
(\nabla \rho\cdot\nabla)\mathbf{u}\,dx\,dy
\\&\leq \norm{\nabla\mathbf{u}}_{L^{\infty}}
\norm{\nabla \rho}_{L^{q}}^q
\leq C
\left(1+
\norm{\nabla\mathbf{u}}_{H^{1}}\right)
\log (1+\norm{\nabla\mathbf{u}}_{H^{2}})
\norm{\nabla \rho}_{L^{q}}^q\\
&\leq C
\left(1+
\norm{\mathbf{u}}_{H^{2}}\right)
\norm{\nabla \rho}_{L^{q}}^q
+\left(1+
\norm{\mathbf{u}}_{H^{2}}\right)
\log (1+\norm{\Delta \omega}_{L^{2}})
\norm{\nabla \rho}_{L^{q}}^q,
\end{aligned}
\end{align}
where we have used the logarithmic Sobolev inequality
$ \norm{\nabla\mathbf{u}}_{L^{\infty}}\leq
C\left(1+
\norm{\nabla\mathbf{u}}_{H^{1}}\right)
\log (1+\norm{\nabla\mathbf{u}}_{H^{2}})$ and
$ \norm{\nabla \mathbf{u} }_{H^{2}}^2\leq C( \norm{\Delta \omega }_{L^{2}}^2 +\norm{\mathbf{u}}_{H^{2}}^2)$.

Finally, combining \eqref{proof-37-5} and \eqref{proof-37-13}, we get
\begin{align}\label{proof-37-14}
\begin{aligned}
&\frac{d}{dt}\norm{\nabla \omega}_{L^2}^2 +\nu\norm{\Delta \omega}_{L^2}^2
+\frac{2}{q}\frac{d}{dt}\norm{\nabla \rho}_q^q \leq C_{\epsilon }\norm{\mathbf{u}}_{H^{2}}^2 +
C_{\epsilon }\norm{\mathbf{u_{t}}}_{H^{1}}^2
\\& \quad +C_{\epsilon}\norm{\mathbf{u}}_{H^1}^4 \norm{\mathbf{u}}_{H^2}^2 +C_{\epsilon}
+C_{\epsilon}\norm{ \rho}_{L^{q}}^q +C
\left( 1+ \norm{\mathbf{u}}_{H^{2}}\right) \norm{\nabla \rho}_{L^{q}}^q
\\
&\quad+\left( 1+ \norm{\mathbf{u}}_{H^{2}}\right) \log (1+\norm{\Delta \omega}_{L^{2}}) \norm{\nabla \rho}_{L^{q}}^q.
\end{aligned}
\end{align}
Denoting
\begin{align}\label{proof-37-141}
\begin{aligned}
&X(t)=\norm{\nabla \omega}_{L^2}^2
+\frac{2}{q}\norm{\nabla \rho}_q^q+C_{\epsilon},\quad
 Y(t)=\nu\norm{\Delta \omega}_{L^2}^2,\\
 & A(t)=  C_{\epsilon }\left(\norm{\mathbf{u}}_{H^{2}}^2
+ \norm{\mathbf{u_{t}}}_{H^{1}}^2 +\norm{\mathbf{u}}_{H^1}^4 \norm{\mathbf{u}}_{H^2}^2 +1\right),\quad B(t)=C \left( 1 + \norm{\mathbf{u}}_{H^{2}}\right),
\end{aligned}
\end{align}
we employ the logarithmic Gronwall inequality and \autoref{lemma-grad-3} to deduce
\begin{align}\label{proof-37-15}
\begin{aligned}
&\nabla \rho \in L^{\infty}\left((0,T);L^q(\Omega)\right),\;\;\quad
\Delta \omega\in L^{2}\left((0,T);L^2(\Omega)\right).
\end{aligned}
\end{align}
Hence, by \eqref{rhopp},  Lemma \eqref{21336} and \eqref{proof-37-15}, one has
\begin{align}\label{proof-37-16}
\begin{aligned}
& \rho \in L^{\infty}\left((0,T);W^{1,q}(\Omega)\right),\;\;\quad \mathbf{u}\in L^{2}\left((0,T);H^3(\Omega)\right).
\end{aligned}
\end{align}
\end{proof}

\section{Proofs of the main theorems}
\subsection{Proof of \autoref{theorem-3}}\label{section-1}
    \subsubsection{Eigenvalue problem}
    The eigenvalue problem for the linear equations \eqref{213-2} reads as
    \begin{align}\label{three-proof--eigen-3}
\begin{cases}
\displaystyle{ \lambda \mathbf{u}= \nu \Delta \mathbf{u}-
\frac{1}{\rho^*} \nabla P-\frac{\rho}{\rho^*} \nabla f, }
 \\[2mm]  \lambda \rho=- h(x,y) (\mathbf{u}\cdot  \nabla)f ,
\\ \nabla \cdot  \mathbf{u}=0,\\[2mm]
\displaystyle{ \frac{\partial u_{\mathbf{\tau}} }{\partial\mathbf{n}}
  =-\left(a^{-1}+\frac{\alpha}{\nu}\right)u_{\mathbf{\tau}},\quad x^2+y^2=a^2,}  \\[2mm]
 \mathbf{u}\cdot  \mathbf{n}=0,\quad x^2+y^2=a^2,\\
 \mathbf{u} \cdot   \mathbf{n}|_{x^2+y^2=b^2}=\nabla \times   \mathbf{u}|_{x^2+y^2=b^2}=0,.
\end{cases}
\end{align}
By multiplying the first equation of the problem \eqref{three-proof--eigen-3} by $\lambda$ and eliminating the density $\rho$, one subsequently obtains
    \begin{align}\label{three-proof--eigen-4-1-1}
\begin{cases} \displaystyle{
-\lambda^2 \mathbf{u}= -\nu \lambda \Delta \mathbf{u} +
\frac{\lambda}{\rho^*} \nabla P-\frac{h(x,y) }{\rho^*} (\mathbf{u}\cdot  \nabla f) \nabla f,} \\
\nabla \cdot  \mathbf{u}=0,\\[1mm]
\displaystyle{ \frac{\partial u_{\mathbf{\tau}} }{\partial\mathbf{n}}
  =-\left(a^{-1}+\frac{\alpha}{\nu}\right)u_{\mathbf{\tau}},\quad x^2+y^2=a^2,} \\
 \mathbf{u}\cdot  \mathbf{n}=0,\quad x^2+y^2=a^2,\\
\mathbf{u} \cdot   \mathbf{n}|_{x^2+y^2=b^2}=\nabla \times   \mathbf{u}|_{x^2+y^2=b^2}=0.
\end{cases}
\end{align}

Please note that if the problem \eqref{three-proof--eigen-4-1-1} has a nonzero eigenvalue 
$\lambda$, let $\rho$ be given by $\rho=-\lambda^{-1} h(x,y)  (\mathbf{u}\cdot  \nabla)f $,
then $(\lambda, \mathbf{u},\rho)$ solves the problem \eqref{three-proof--eigen-3}.
Since the terms containing $\lambda$ breaks down the variational structure of the problem \eqref{three-proof--eigen-4-1-1}, it prevents the direct application of the standard variational
method for establishing the existence of solutions
to \eqref{three-proof--eigen-4-1-1}.  Replacing $\lambda$ by $s$ on right hand side of
$ \eqref{three-proof--eigen-4-1-1}_1$, we consider the following modified eigenvalue problem:
    \begin{align}\label{three-proof--eigen-4-1}
\begin{cases}   \displaystyle{
-\lambda^2 \mathbf{u}= -\nu s \Delta \mathbf{u} +
\frac{s}{\rho^*} \nabla P-\frac{h(x,y) }{\rho^*} (\mathbf{u}\cdot\nabla f) \nabla f,} \\[1mm]
\nabla \cdot  \mathbf{u}=0,\\[2mm]
 \displaystyle{ \frac{\partial u_{\mathbf{\tau}} }{\partial\mathbf{n}}
  =-\left(a^{-1}+\frac{\alpha}{\nu}\right)u_{\mathbf{\tau}},\quad x^2+y^2=a^2,}\\[1mm]
 \mathbf{u}\cdot  \mathbf{n}=0,\quad x^2+y^2=a^2,\\
 \mathbf{u} \cdot   \mathbf{n}|_{x^2+y^2=b^2}=\nabla \times   \mathbf{u}|_{x^2+y^2=b^2}=0.
\end{cases}
\end{align}

Denoting the functionals $J$, $E$ and $E_k$ ($k=1,2$) by
\begin{subequations}
\begin{align}
    &J( \mathbf{u})=\int_{\Omega} \abs{ \mathbf{u}}^2\,dx\,dy,\quad E( s, \mathbf{u})=s\nu E_1( \mathbf{u})-\frac{1}{\rho^*} E_2( \mathbf{u}),\\
    &E_1( \mathbf{u})=
     \int_{\Omega}\abs{\nabla\mathbf{u}}^2\,dx\,dy
     +\red{\frac{1}{b} \int_{\partial B(0,b)}\left(u_{\mathbf{\tau}}\right)^2 \,ds}
   + \int_{\partial B(0,a)}\left(a^{-1}+\frac{\alpha}{\nu}\right)\left(u_{\mathbf{\tau}}\right)^2 \,ds,\\
   &E_2( \mathbf{u})=
     \int_{\Omega}h(x,y)(\mathbf{u}\cdot  \nabla f)^2
     \,dx\,dy ,
\end{align}
\end{subequations}
we consider the following variational problem:
\begin{align}\label{variational-problem}
-\lambda^2=\inf_{ \mathbf{u}\in \mathcal{A}}E( s, \mathbf{u}),
\end{align}
where $\mathcal{A}$ is given by
\begin{align}
\begin{aligned}
 &\mathcal{A}=\left\{ \mathbf{u}\in  \mathcal{X}\, |\, J( \mathbf{u})=1\right\},\\
& \mathcal{X}=
 \left\{ \mathbf{u}\in H^1(\Omega)| \nabla \cdot \mathbf{u}=0,~ \mathbf{u}~\text{satisfies}~\eqref{cond-1} \right\}.
 \end{aligned}
\end{align}

By virtue of the assumption $h(x,y)>0$ at $(x,y)=(x_0,y_0)\in\Omega$, there exists an $\mathbf{u}\in \mathcal{X}$,such that $E_2( \mathbf{u})>0$. Therefore, let $s$ be any fixed positive number, satisfying
\begin{align}\label{ssss}
\begin{aligned}
s^{-1}>\min_{\substack{\mathbf{u}\in  \mathcal{X}\\E_2( \mathbf{u})=1}}
\rho^*\nu E_1( \mathbf{u})=s_0^{-1}.
 \end{aligned}
\end{align}
Noting that \eqref{ssss} means
\[
-(\lambda(s))^2=\inf_{ \mathbf{u}\in \mathcal{A}}E( s, \mathbf{u})<0,
\]
one further has the following results
\begin{proposition}\label{pro-1}
If $\rho_s,f \in C^{1}(\overline{\Omega})$, $h(x_0,y_0)>0$ at $(x_0,y_0)\in\Omega$,
 and $s$ satisfying \eqref{ssss}, then $E( s, \mathbf{u})$ achieves its minimum on $ \mathcal{A}$.
Moreover, if $\mathbf{u}$ solves the equations \eqref{variational-problem},
then there is a pressure field $p$ associated to $\mathbf{u}$, such that $(\mathbf{u},p,\lambda)$ solves
\eqref{three-proof--eigen-4-1}
and $(\mathbf{u},p) \in H^2(\Omega)\times H^1(\Omega)$.
\end{proposition}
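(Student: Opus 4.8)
The plan is to produce the minimizer by the direct method in the calculus of variations, to read off the eigenvalue equation and the pressure from the first-order optimality condition, and finally to bootstrap regularity through the Stokes estimate of \autoref{Stokes0725}. First I would check that $E(s,\cdot)$ is bounded below and coercive on $\mathcal{A}$. Since $f\in C^{1}(\overline{\Omega})$, both $h$ and $\nabla f$ are bounded on $\overline{\Omega}$, so $|E_2(\mathbf{u})|\le C\|\mathbf{u}\|_{L^2}^2=C$ on $\mathcal{A}$; together with the non-negativity of the boundary integrals in $E_1$ (guaranteed by $a^{-1}+\alpha/\nu\ge 0$) this shows $E(s,\cdot)\ge -C/\rho^*$ on $\mathcal{A}$. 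For a minimizing sequence $\{\mathbf{u}_n\}\subset\mathcal{A}$, the bound on $E_2$ forces $s\nu E_1(\mathbf{u}_n)=E(s,\mathbf{u}_n)+\frac{1}{\rho^*}E_2(\mathbf{u}_n)$ to stay bounded, and since $\|\mathbf{u}_n\|_{L^2}=1$ this yields a uniform $H^1(\Omega)$ bound.

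Passing to a subsequence, $\mathbf{u}_n\rightharpoonup\mathbf{u}$ weakly in $H^1(\Omega)$. By the Rellich compact embedding together with trace compactness, $\mathbf{u}_n\to\mathbf{u}$ strongly in $L^2(\Omega)$ and in $L^2(\partial\Omega)$. The subspace $\mathcal{X}$ (divergence-free fields with $\mathbf{u}\cdot\mathbf{n}=0$ on $\partial\Omega$) is weakly closed, and the constraint $J(\mathbf{u})=1$ survives the strong $L^2$ limit, so $\mathbf{u}\in\mathcal{A}$. The term $\int_\Omega|\nabla\mathbf{u}|^2$ is weakly lower semicontinuous and the boundary integrals converge by trace compactness, whence $E_1(\mathbf{u})\le\liminf E_1(\mathbf{u}_n)$; the sign-indefinite term is harmless because $h,\nabla f$ are bounded and $\mathbf{u}_n\to\mathbf{u}$ strongly in $L^2$, giving $E_2(\mathbf{u}_n)\to E_2(\mathbf{u})$. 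Hence $E(s,\mathbf{u})\le\liminf E(s,\mathbf{u}_n)=-\lambda^2$, so $\mathbf{u}$ attains the infimum in \eqref{variational-problem}; by the choice of $s$ in \eqref{ssss} this infimum is negative, so $\lambda$ is real.

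Next I would derive the Euler--Lagrange equation. Since $E(s,\cdot)$ and $J$ are quadratic forms, constrained minimality gives a scalar multiplier $\mu$ with $\delta E(\mathbf{v})=\mu\,\delta J(\mathbf{v})$ for all admissible $\mathbf{v}\in\mathcal{X}$; testing with $\mathbf{v}=\mathbf{u}$ and using degree-two homogeneity identifies $\mu=E(s,\mathbf{u})=-\lambda^2$. Thus the weak identity
\[
s\nu\left[\int_\Omega \nabla\mathbf{u}:\nabla\mathbf{v}\,dx\,dy+\frac{1}{b}\int_{\partial B(0,b)}u_{\mathbf{\tau}}v_{\mathbf{\tau}}\,ds+\int_{\partial B(0,a)}\left(a^{-1}+\frac{\alpha}{\nu}\right)u_{\mathbf{\tau}}v_{\mathbf{\tau}}\,ds\right]-\frac{1}{\rho^*}\int_\Omega h\,(\mathbf{u}\cdot\nabla f)(\mathbf{v}\cdot\nabla f)\,dx\,dy=-\lambda^2\int_\Omega \mathbf{u}\cdot\mathbf{v}\,dx\,dy
\]
holds for every $\mathbf{v}\in\mathcal{X}$. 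Restricting to divergence-free test fields with compact support, the functional $-\nu s\,\Delta\mathbf{u}-\frac{h}{\rho^*}(\mathbf{u}\cdot\nabla f)\nabla f+\lambda^2\mathbf{u}$ annihilates all such $\mathbf{v}$, so by de Rham's theorem there is a pressure $P$ with $-\lambda^2\mathbf{u}=-\nu s\,\Delta\mathbf{u}+\frac{s}{\rho^*}\nabla P-\frac{h}{\rho^*}(\mathbf{u}\cdot\nabla f)\nabla f$ in the distributional sense, i.e. the first line of \eqref{three-proof--eigen-4-1}.

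For regularity I would rewrite this as the Stokes system
\[
-\nu s\,\Delta\mathbf{u}+\frac{s}{\rho^*}\nabla P=\mathbf{g},\qquad \mathbf{g}:=-\lambda^2\mathbf{u}+\frac{h}{\rho^*}(\mathbf{u}\cdot\nabla f)\nabla f,
\]
whose right-hand side lies in $L^2(\Omega)$ because $\mathbf{u}\in H^1\hookrightarrow L^2$ and $h,\nabla f\in L^\infty$. \autoref{Stokes0725} (with $\mu=\nu s$) then gives $\|\nabla^2\mathbf{u}\|_{L^2}+\|\nabla P\|_{L^2}\le C\|\mathbf{g}\|_{L^2}<\infty$, so $(\mathbf{u},P)\in H^2(\Omega)\times H^1(\Omega)$; with this regularity, integrating the weak identity by parts recovers the natural boundary conditions, namely the Robin condition $\partial u_{\mathbf{\tau}}/\partial\mathbf{n}=-(a^{-1}+\alpha/\nu)u_{\mathbf{\tau}}$ on $r=a$ from the integral over $\partial B(0,a)$ and the vorticity-free condition $\nabla\times\mathbf{u}=0$ on $r=b$ from the integral over $\partial B(0,b)$, which together with the imposed $\mathbf{u}\cdot\mathbf{n}=0$ complete \eqref{three-proof--eigen-4-1}. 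I expect the main obstacle to be the two-directional bookkeeping of the boundary integrals in $E_1$: establishing their weak continuity via trace compactness for the lower-semicontinuity step, and, conversely, verifying that the integration by parts of the first variation reproduces exactly the Robin and vorticity-free conditions; the de Rham recovery of $P$ and the invocation of \autoref{Stokes0725} under these mixed boundary conditions require care as well, but the Stokes estimate has been tailored to precisely this setting.
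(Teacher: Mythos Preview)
Your proposal is correct and follows essentially the same route as the paper: the direct method (boundedness below, minimizing sequence bounded in $H^1$, weak lower semicontinuity of $E_1$ plus strong continuity of $E_2$), the Euler--Lagrange/Lagrange-multiplier identification of $-\lambda^2$, and then the Stokes estimate of \autoref{Stokes0725} for the $H^2\times H^1$ regularity. The paper derives the first variation via the Rayleigh quotient $I(t)=E(s,\mathbf{u}+t\mathbf{w})/J(\mathbf{u}+t\mathbf{w})$ rather than the multiplier formulation, and is less explicit than you are about trace compactness for the boundary integrals, the de Rham recovery of $P$, and the recovery of the natural boundary conditions---but these are presentational differences, not substantive ones.
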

\begin{proof}
According to the definition of $E( s, \mathbf{u})$, $J(\mathbf{u})=1$, we have
\begin{align}\label{cccc}
E( s, \mathbf{u})=s\nu E_1( \mathbf{u})-\frac{1}{\rho^*} E_2( \mathbf{u})
>\frac{1}{\rho^*} E_2( \mathbf{u})>-\frac{
\norm{h(x,y)}_{L^{\infty}}\norm{\nabla f}^2_{L^{\infty}}}{\rho^*} .
\end{align}

In order to show $E(s,\mathbf{u})$ achieves its minimum on $\mathcal{A}$, we denote by
$\{ \mathbf{u}_{n}\}_{n = 1}^{ \infty} \in \mathcal{A}$ a minimizing sequence of the energy $E(s,\mathbf{u})$.
Thus, we assume that $E(s,\mathbf{u}_n)$ is bounded, and denote the bound by $B = \sup_n E(s,\mathbf{u}_n)$. Consequently,
 $\{\mathbf{u}_{n}\}_{n = 1}^{\infty}$ is a bounded sequence in $H^1(\Omega)$. Thus, there is a convergent subsequence $\{ \mathbf{u}_{n'}\} $,
 such that $\mathbf{u}_{n'}\to\mathbf{u}$ weakly in $H^1(\Omega)$ and strongly in $L^2(\Omega)$. Therefore,
 $J( \mathbf{u})=1$ remains true since it is a strong continuous functional on $L^2(\Omega)$, and hence, $\mathbf{u}\in \mathcal{A}$.
 Moreover, by the weak lower semicontinuity of $E_1$ and strong continuity of  $E_2$, one can show the weak lower semicontinuity
 of $E( s,\mathbf{u})$. Therefore,
    \begin{align}
        E(s,\mathbf{u}) \leq \liminf_{n'} E( s,\mathbf{u}_{n'} ) =  \inf_{\mathbf{u}\in\mathcal{A}}E( s,\mathbf{u} ).
    \end{align}

Finally, we show that the minimizer $\tilde{\mathbf{u}}$ solves \eqref{three-proof--eigen-4-1}.
For any $C_0^{ \infty}(\Omega)$ test function $\mathbf{w} = (w_1,w_2)$, we define
    \begin{align*}
        I(t) := \frac{E(s,\mathbf{u}+ t \mathbf{w} )}
        {J(\mathbf{u} + t \mathbf{w})} .
    \end{align*}
    Since $\tilde{\mathbf{u}}$ is the minimizer, $I'(0) = 0$ is valid. A straightforward calculation shows that
    \begin{align*}
        \begin{aligned}
            I'(0) & = \frac{1}{J(\mathbf{u})}  \left( DE(s,\mathbf{u} )  \mathbf{w}+ \lambda^2 D J(\mathbf{u} )  \mathbf{w} \right),
        \end{aligned}
    \end{align*}
    where we have used the fact $\frac{E(s,\mathbf{u})}{J(\mathbf{u})} =- \lambda^2$.
    Keeping in mind that $J(\mathbf{u}) > 0$, one gets
        \begin{align*}
        \begin{aligned}
&s\nu \int_{\Omega}\nabla \mathbf{u} \cdot \nabla\mathbf{w}\,dx\,dy
+\red{\frac{s\nu}{b} \int_{\partial B(0,b)}u_{\tau} w_{\tau}\ \,ds}
   +s\nu \int_{\partial B(0,a)}\left(a^{-1}+\frac{\alpha}{\nu}\right)u_{\tau} w_{\tau}\,ds
   \\ &-\frac{1}{\rho^*} \int_{\Omega}h(x,y)(\mathbf{u}\cdot\nabla f)(\mathbf{w}\cdot\nabla f)\,dx\,dy=
   -\lambda^2\int_{\Omega} \mathbf{u} \cdot \mathbf{w}\,dx\,dy .
           \end{aligned}
    \end{align*}
which means that $\mathbf{u}$ is a weak solution to the boundary problem \eqref{three-proof--eigen-4-1}.
Thus, it follows from the regularity theory (see \autoref{Stokes0725}) on the Stokes equations
that there are constants $c_1$ dependent on the domain $\Omega$, and $ c_2$, such that
\begin{align*}
    \norm{\mathbf{u}}_{H^2} + \norm{\nabla p}_{L^2} \leq \frac{c_1}{s \nu }\norm{\frac{\gamma}{\rho^*}
 (\mathbf{u}\cdot  \nabla f)
 \nabla f +\lambda^2 \mathbf{u}}_{L^2} \leq c_2,
\end{align*}
where $p \in H^{1}(\Omega)$ is the pressure field. This immediately gives that $\mathbf{u}\in H^{2}(\Omega) $, $p \in H^{1}(\Omega)$.
\end{proof}

\begin{proposition}\label{properties_of_alpha}
 Under the conditions of \autoref{pro-1}, the function 
 \[
 \alpha(s):=\inf_{ \mathbf{u}\in \mathcal{A}}E( s, \mathbf{u})
 \]
  defined on $( 0, s_0 )$ enjoys the following properties:
    \begin{enumerate}
        \item[\rm{(1)}] For any $ s\in(0, s_0)$, there exist positive numbers $C_i$ ($i = 1,2$), depending on $( f,\nu,\rho^*,\gamma)$,
        such that
              \begin{equation}
                  \alpha(s ) \leq  - C_1 + s C_2;
              \end{equation}
        \item[\rm{(2)}]  $ \alpha(s) \in C^{0,1}_{loc}(0, s_0)\) is
        continuous and
        strictly increasing.
    \end{enumerate}
\end{proposition}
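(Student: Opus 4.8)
The plan is to exploit the affine dependence of $E(s,\mathbf{u})$ on $s$: for each fixed $\mathbf{u}$ the map $s\mapsto E(s,\mathbf{u})=s\,\nu E_1(\mathbf{u})-\tfrac{1}{\rho^*}E_2(\mathbf{u})$ is a line with slope $\nu E_1(\mathbf{u})\ge 0$, so $\alpha(s)$ is an infimum of a family of nondecreasing affine functions, hence concave and nondecreasing. All three assertions will then follow by comparing $E$ at the minimizers supplied by \autoref{pro-1} for nearby values of $s$.

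For part (1) I would simply test the infimum against a single, $s$-independent competitor. Since $h(x_0,y_0)>0$, the discussion preceding \autoref{pro-1} already produces some $\mathbf{u}_*\in\mathcal{X}$ with $E_2(\mathbf{u}_*)>0$; after rescaling so that $J(\mathbf{u}_*)=1$ we have $\mathbf{u}_*\in\mathcal{A}$, and therefore
\[
\alpha(s)\le E(s,\mathbf{u}_*)=-\tfrac{1}{\rho^*}E_2(\mathbf{u}_*)+s\,\nu E_1(\mathbf{u}_*).
\]
This is precisely the claimed estimate with $C_1=\tfrac{1}{\rho^*}E_2(\mathbf{u}_*)>0$ and $C_2=\nu E_1(\mathbf{u}_*)>0$ (positivity of $C_2$ coming from the Poincar\'e-type bound below), both depending only on the fixed competitor and the data $(f,\nu,\rho^*,\gamma)$.

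For part (2), monotonicity and continuity both come from sandwiching the difference quotient. Let $s_1<s_2$ in $(0,s_0)$ and let $\mathbf{u}_{s_1},\mathbf{u}_{s_2}$ be the corresponding minimizers from \autoref{pro-1}. Testing $\alpha(s_2)$ against $\mathbf{u}_{s_1}$ and $\alpha(s_1)$ against $\mathbf{u}_{s_2}$, and using $E(s_2,\mathbf{u})-E(s_1,\mathbf{u})=(s_2-s_1)\nu E_1(\mathbf{u})$, I obtain
\[
(s_2-s_1)\,\nu E_1(\mathbf{u}_{s_2})\le \alpha(s_2)-\alpha(s_1)\le (s_2-s_1)\,\nu E_1(\mathbf{u}_{s_1}).
\]
Strict monotonicity then follows from a uniform positive lower bound on $E_1$ over $\mathcal{A}$: by \autoref{lemma-grad} there is $D>0$ with $E_1(\mathbf{u})\ge D\,J(\mathbf{u})=D$ for every $\mathbf{u}\in\mathcal{A}$ (a vanishing $E_1$ would force $\nabla\mathbf{u}\equiv0$, hence $\mathbf{u}$ constant, hence $\mathbf{u}=0$ from $\mathbf{u}\cdot\mathbf{n}|_{r=b}=0$, contradicting $J=1$), so the lower increment is at least $(s_2-s_1)\nu D>0$.

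Local Lipschitz continuity follows from the same sandwich once $E_1(\mathbf{u}_s)$ is bounded above uniformly on compact subintervals, and this is the step I expect to be the main obstacle. I would read the bound off the identity $s\,\nu E_1(\mathbf{u}_s)=\alpha(s)+\tfrac{1}{\rho^*}E_2(\mathbf{u}_s)$, combined with $\alpha(s)<0$ (established just before the proposition) and the crude estimate $E_2(\mathbf{u}_s)\le \norm{h}_{L^{\infty}}\norm{\nabla f}^2_{L^{\infty}}J(\mathbf{u}_s)=\norm{h}_{L^{\infty}}\norm{\nabla f}^2_{L^{\infty}}$, which yields $E_1(\mathbf{u}_s)\le \tfrac{1}{s\nu\rho^*}\norm{h}_{L^{\infty}}\norm{\nabla f}^2_{L^{\infty}}$. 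On any $[s_-,s_+]\subset(0,s_0)$ the factor $1/s$ is controlled by $1/s_-$, giving a uniform bound $M$; hence the difference quotient lies in $[\nu D,\nu M]$, proving simultaneously that $\alpha$ is strictly increasing and $\alpha\in C^{0,1}_{loc}(0,s_0)$. A pleasant feature of this argument is that no continuity of the map $s\mapsto\mathbf{u}_s$ is required—the two-sided test-function comparison controls the increments directly—so the only input beyond the affine structure is the existence of a minimizer at each $s$, which is exactly what \autoref{pro-1} provides.
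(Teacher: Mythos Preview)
Your proof is correct and follows essentially the same approach as the paper: a fixed competitor $\mathbf{u}_*$ for part~(1), and the two-sided test-function comparison $(s_2-s_1)\nu E_1(\mathbf{u}_{s_2})\le \alpha(s_2)-\alpha(s_1)\le (s_2-s_1)\nu E_1(\mathbf{u}_{s_1})$ for part~(2). You actually supply more detail than the paper, which simply records the sandwich and concludes; your explicit lower bound $E_1\ge D$ via \autoref{lemma-grad} and upper bound $E_1(\mathbf{u}_s)\le (s\nu\rho^*)^{-1}\norm{h}_{L^\infty}\norm{\nabla f}_{L^\infty}^2$ from $\alpha(s)<0$ fill in steps the paper leaves implicit.
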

\begin{proof}
There exists $\tilde{\mathbf{u}}$, such that $J(\tilde{\mathbf{u}})=1$. Then,
\[
\alpha(s)\leq -C_2+sC_1,\quad
C_1=\nu E_1( \tilde{\mathbf{u}}),\quad C_2=-\frac{1}{\rho^*} E_2(\tilde{\mathbf{u}}).
\]
Note that $E( s_1, \mathbf{u})<E( s_2, \mathbf{u})$ with $s_1<s_2$
and subjected with $J(\mathbf{u})=1$. Hence,
\begin{align}\label{a12-1}
\begin{aligned}
\alpha(s_2)-
\alpha(s_1)&=
E( s_2, \mathbf{u}_2)- E( s_1, \mathbf{u}_1)\\&=
E( s_2, \mathbf{u}_2)-E( s_1, \mathbf{u}_2)
+E( s_1, \mathbf{u}_2)-E( s_1, \mathbf{u}_1)\\
&\geq \nu(s_2-s_1)E_1( \mathbf{u}_2)>0.
\end{aligned}
\end{align}
In a similar way, we have
\begin{align}\label{a12-2}
\begin{aligned}
\alpha(s_2)-
\alpha(s_1)&=
E( s_2, \mathbf{u}_2)- E( s_1, \mathbf{u}_1)\\&=
E( s_2, \mathbf{u}_2)-E( s_2, \mathbf{u}_1)
+E( s_2, \mathbf{u}_1)-E( s_1, \mathbf{u}_1)\leq \nu(s_2-s_1)E_1( \mathbf{u}_1).
\end{aligned}
\end{align}
Hence, from \eqref{a12-1} and \eqref{a12-2} it follows that
$ \alpha(s) \in C^{0,1}_{\mathrm{loc}}(0, s_0)\) is
        continuous and
        strictly increasing.
\end{proof}

By virtue of \autoref{properties_of_alpha}, there exists a constant $ s^* > 0$ depending on the quantities $( f,\nu,\rho^*,\gamma)$,
such that for any $s \leq s^*$, $\alpha(s ) < 0$. Denoting
\begin{equation}
    s_c:=\sup \{ s\ |\ \alpha(t ) < 0 \quad
    \text{for any $t\in (0,s)$}  \} ,
\end{equation}
one thus can define $ \lambda(s ) = \sqrt{ - \alpha(s )} > 0$ for any $s \in \mathcal{S} :=(0,\mathcal{G})$.

Now, we are able to prove the existence of the fixed point:
\begin{proposition}\label{proposition-3}
    There exists a $s^* \in \mathcal{S}$, such that
    \begin{align*}
        -(s^*)^2=-\lambda^2(s^*) = \alpha(\lambda( s^* )) = \inf_{\mathbf{u} \in \mathcal{A}} E(s,\mathbf{u}).
    \end{align*}
\end{proposition}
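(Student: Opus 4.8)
The plan is to realize $s^*$ as a fixed point of the map $s\mapsto\lambda(s)=\sqrt{-\alpha(s)}$ via the intermediate value theorem, exploiting the continuity and strict monotonicity of $\alpha$ from \autoref{properties_of_alpha}. Concretely, I would introduce the scalar function $\Phi(s):=\lambda(s)-s$ on $\mathcal{S}=(0,s_c)$; since $\alpha$ is continuous and strictly negative there, $\lambda(s)=\sqrt{-\alpha(s)}$ is continuous and positive, and hence so is $\Phi$. Producing $s^*$ with $\lambda(s^*)=s^*$ then reduces to locating a zero of $\Phi$, after which the asserted chain $-(s^*)^2=-\lambda^2(s^*)=\alpha(\lambda(s^*))=\alpha(s^*)=\inf_{\mathbf{u}\in\mathcal{A}}E(s^*,\mathbf{u})$ follows at once from the definition $\lambda(s)=\sqrt{-\alpha(s)}$ together with the fixed-point relation $\lambda(s^*)=s^*$.

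The two sign computations at the endpoints form the core of the argument. Near $s=0$: by monotonicity the limit $\alpha(0^+):=\lim_{s\to 0^+}\alpha(s)$ exists and is finite, being bounded below by \eqref{cccc}; the upper bound $\alpha(s)\le -C_1+sC_2$ from \autoref{properties_of_alpha} forces $\alpha(0^+)\le -C_1<0$, whence $\lambda(0^+)=\sqrt{-\alpha(0^+)}\ge\sqrt{C_1}>0$ and therefore $\lim_{s\to 0^+}\Phi(s)=\lambda(0^+)>0$. Near $s=s_c$: I would first identify $s_c$ with the threshold $s_0$ of \eqref{ssss}. Indeed, for $s\ge s_0$ every $\mathbf{u}\in\mathcal{A}$ satisfies $\tfrac{1}{\rho^*}E_2(\mathbf{u})\le s\nu E_1(\mathbf{u})$, trivially when $E_2(\mathbf{u})\le 0$ and, when $E_2(\mathbf{u})>0$, because $\tfrac{E_2(\mathbf{u})}{\rho^*\nu E_1(\mathbf{u})}\le s_0\le s$ by the homogeneity of $E_1,E_2$ and the definition of $s_0$; thus $E(s,\mathbf{u})\ge 0$ and $\alpha(s)\ge 0$. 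Combined with $\alpha(s)<0$ for $s<s_0$, this yields $s_c=s_0$, and continuity and monotonicity of $\alpha$ then give $\lim_{s\to s_c^-}\alpha(s)=0$, so $\lambda(s)\to 0$ and $\lim_{s\to s_c^-}\Phi(s)=-s_c<0$.

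With $\Phi$ continuous on $(0,s_c)$, positive near $0$ and negative near $s_c$, I would fix interior points $s_1$ and $s_2$ with $\Phi(s_1)>0$, $\Phi(s_2)<0$ and apply the intermediate value theorem on $[s_1,s_2]\subset(0,s_c)$ to obtain $s^*$ with $\Phi(s^*)=0$, i.e.\ $\lambda(s^*)=s^*$, the desired fixed point. Substituting $s=s^*=\lambda(s^*)$ into the modified problem \eqref{three-proof--eigen-4-1} makes it coincide with the original eigenvalue problem \eqref{three-proof--eigen-4-1-1}, so the minimizer supplied by \autoref{pro-1}, paired with its pressure field, is a genuine eigenpair with growth rate $\lambda=s^*>0$; this is precisely the growing mode underlying the linear instability in \autoref{theorem-3}.

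I expect the main obstacle to be the endpoint analysis at $s_c$, specifically proving $\lim_{s\to s_c^-}\alpha(s)=0$ rather than some strictly negative value. This rests on squeezing $\alpha(s)<0$ against the one-sided bound $\alpha(s)\le \nu E_1(\bar{\mathbf{u}})(s-s_0)$ obtained by testing the infimum against a near-optimizer $\bar{\mathbf{u}}$ of the ratio $E_2/(\rho^*\nu E_1)$; controlling the factor $E_1(\bar{\mathbf{u}})$ uniformly as $\bar{\mathbf{u}}$ approaches optimality is the delicate point, and it is here that the compactness and attainment arguments behind \eqref{ssss} (mirroring those of \autoref{pro-1}) must be invoked with care.
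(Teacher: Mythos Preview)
Your proposal is correct and follows essentially the same route as the paper. The paper also applies the intermediate value theorem to a scalar auxiliary function, but chooses $\Phi(s):=-s^2-\alpha(s)$ instead of your $\Phi(s)=\lambda(s)-s=\sqrt{-\alpha(s)}-s$; since for $s>0$ the zeros of these two functions coincide, the arguments are equivalent. At the left endpoint both use the bound $\alpha(s)\le -C_1+sC_2$ from \autoref{properties_of_alpha}, and at the right endpoint the paper simply asserts $\lim_{s\to s_c^-}\Phi(s)\le 0$ without further comment, whereas you spell out why $\alpha(s_c^-)=0$ via the identification $s_c=s_0$. Your extra care there is justified---that limit really does need the observation (implicit in the paper) that $\alpha$ extends continuously to $s_0$ with $\alpha(s_0)=0$, which follows from testing $E(s_0,\cdot)$ against a normalized minimizer of the ratio defining $s_0$; the squeeze you describe against $\alpha(s)\le \nu E_1(\bar{\mathbf{u}})(s-s_0)$ is one way to see this, though the direct evaluation $\alpha(s_0)=0$ is cleaner and avoids the uniformity concern you flag.
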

\begin{proof}
    Define an auxiliary function $
        \Phi(s):= - s^2 - \alpha(s)$.
    According to (1) of \autoref{properties_of_alpha} and the lower boundedness of $E(s,\mathbf{u})$
    (see \eqref{cccc}), one has
    \begin{equation}
        \begin{aligned}
            \lim_{s \to 0^{ +}}\Phi(s) \geq  \lim_{s \to 0^{ +}}- s^2 +C_1 - s C_2 = C_1 > 0, \quad \lim_{s \to s_c^{-} }\Phi(s) \leq 0.
        \end{aligned}
    \end{equation}
    Hence, by the continuity of $\alpha(s)$, one can obtain some $s^* > 0$,
     such that $ -\left(s^*\right)2 - \alpha(s^*) = 0$. Thus, $\alpha(\lambda) < 0$, and therefore $s^*\in \mathcal{S}$,
     which completes the proof.
\end{proof}

\begin{lemma}\label{linearins}
Suppose that $f \in C^{1}(\overline{\Omega})$. If the profile $\rho_s \in C^{1}(\overline{\Omega}) $ satisfying
$\nabla \rho_s =h(x,y) \nabla f$ where $h(x_0,y_0)>0$ at $(x_0,y_0)\in \Omega$ ,
then the eigenvalue problem \eqref{three-proof--eigen-3} has
a positive eigenvalue. For $h(x,y)<h_0< 0$, all eigenvalues $\lambda$
of the prolem \eqref{three-proof--eigen-3}
 are negative.
\end{lemma}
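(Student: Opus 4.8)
The plan is to dispatch the two assertions separately: the existence of a positive eigenvalue in the unstable case is obtained by cashing in the variational machinery of Propositions \ref{pro-1}--\ref{proposition-3}, whereas the stable case reduces to a single scalar energy identity. For the unstable case, assume $h(x_0,y_0)>0$. By \autoref{proposition-3} there is a fixed point $s^{*}\in\mathcal{S}$ with $-(s^{*})^{2}=\alpha(s^{*})=\inf_{\mathbf{u}\in\mathcal{A}}E(s^{*},\mathbf{u})$, so that $\lambda(s^{*})=s^{*}>0$. By \autoref{pro-1} this infimum is attained at some $\mathbf{u}\in H^{2}(\Omega)$ with an associated pressure $p\in H^{1}(\Omega)$, and $(\mathbf{u},p,\lambda)$ solves the modified problem \eqref{three-proof--eigen-4-1} with $s=s^{*}$. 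The decisive point is that at the fixed point the artificial parameter and the eigenvalue coincide, $s=s^{*}=\lambda$, so \eqref{three-proof--eigen-4-1} reduces \emph{exactly} to \eqref{three-proof--eigen-4-1-1}. Setting $\rho:=-\lambda^{-1}h(x,y)(\mathbf{u}\cdot\nabla)f$, which is well defined because $\lambda=s^{*}>0$, I would verify line by line that $(\lambda,\mathbf{u},\rho)$ satisfies \eqref{three-proof--eigen-3}; this is precisely the reduction already recorded in the paragraph following \eqref{three-proof--eigen-4-1-1}. Hence $\lambda=s^{*}>0$ is a genuine positive eigenvalue.

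For the stable case $h(x,y)<h_{0}<0$, I would derive a quadratic relation for $\lambda$. Let $(\lambda,\mathbf{u},\rho)$ be any eigenpair with $\mathbf{u}\not\equiv 0$ and $\lambda\neq 0$. The second equation of \eqref{three-proof--eigen-3} gives $\rho=-\lambda^{-1}h(\mathbf{u}\cdot\nabla f)$; substituting into the first equation reproduces \eqref{three-proof--eigen-4-1-1} with $s=\lambda$. Taking the $L^{2}$ inner product with $\overline{\mathbf{u}}$, integrating by parts via \autoref{lemma-grad-1-1} and the boundary conditions, and discarding the pressure term by incompressibility, I obtain
\begin{align*}
\|\mathbf{u}\|_{L^{2}}^{2}\,\lambda^{2}
+\nu E_{1}(\mathbf{u})\,\lambda
-\frac{1}{\rho^{*}}\int_{\Omega}h(x,y)\,(\mathbf{u}\cdot\nabla f)^{2}\,dx\,dy=0.
\end{align*}
Here $\|\mathbf{u}\|_{L^{2}}^{2}>0$ and, because $\nu a^{-1}+\alpha\ge 0$, also $E_{1}(\mathbf{u})>0$, while the hypothesis $h<h_{0}<0$ forces the constant term $c:=-\frac{1}{\rho^{*}}\int_{\Omega}h\,(\mathbf{u}\cdot\nabla f)^{2}\,dx\,dy\ge 0$, with $c>0$ unless $\mathbf{u}\cdot\nabla f\equiv 0$. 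For a quadratic $a\lambda^{2}+b\lambda+c=0$ with $a,b>0$ and $c>0$, Vieta's formulas give sum of roots $-b/a<0$ and product $c/a>0$, so both roots have strictly negative real part. Thus every eigenvalue satisfies $\mathrm{Re}\,\lambda<0$, which is the asserted linear stability.

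Two degenerate configurations must then be cleaned up. If $\mathbf{u}\cdot\nabla f\equiv 0$ (the case $c=0$), the buoyancy coupling disappears and the problem collapses to the Stokes eigenvalue problem $\lambda\mathbf{u}=\nu\Delta\mathbf{u}-\frac{1}{\rho^{*}}\nabla P$, whose eigenvalues are negative by the same energy identity with $c=0$. The value $\lambda=0$ is excluded because a nontrivial null eigenfunction, together with the reconstructed $\rho$, would constitute a steady state of \eqref{main-s}, which \autoref{steady-lemma} forbids. I expect the main obstacle to lie in the unstable case—specifically in checking rigorously that the fixed-point collapse $s=\lambda$ promotes the regularized minimizer to an honest solution of the \emph{unregularized} problem \eqref{three-proof--eigen-3}, including the admissibility and regularity of the reconstructed density $\rho=-\lambda^{-1}h(\mathbf{u}\cdot\nabla)f$. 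By contrast the stable case is a self-contained computation whose only delicate points are the coercivity of $E_{1}$ under the mixed slip conditions and the disposal of the two degenerate cases above.
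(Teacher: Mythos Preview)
Your treatment of the unstable case is exactly the paper's: combine \autoref{pro-1} and \autoref{proposition-3} to produce the fixed point $s^{*}=\lambda>0$ and read off the eigenpair of \eqref{three-proof--eigen-3}. For the stable case your route differs slightly from the paper's. You eliminate $\rho$ first and study the quadratic $\|\mathbf u\|_{L^2}^2\lambda^{2}+\nu E_1(\mathbf u)\,\lambda-\frac{1}{\rho^*}\int_\Omega h\,|\mathbf u\cdot\nabla f|^{2}\,dx\,dy=0$, whereas the paper keeps $\rho$ and tests the first equation of \eqref{three-proof--eigen-3} with $\rho^*\mathbf u$ and the second with $-\rho/h$; the cross term $\int_\Omega\rho(\mathbf u\cdot\nabla f)$ cancels, yielding the \emph{linear} identity
\[
\lambda\Bigl(\rho^*\|\mathbf u\|_{L^2}^2+\int_\Omega\frac{|\rho|^{2}}{-h}\,dx\,dy\Bigr)=-\rho^*\nu E_1(\mathbf u)<0,
\]
so $\lambda<0$ immediately. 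The paper's identity is cleaner---it dispenses with the degenerate-case analysis and exhibits reality of $\lambda$ for free---but your quadratic argument is equally valid and gives the same conclusion.

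One correction: your exclusion of $\lambda=0$ via \autoref{steady-lemma} does not work as stated, because a null eigenfunction of the \emph{linearized} problem \eqref{three-proof--eigen-3} is not a solution of the \emph{nonlinear} steady system \eqref{main-s} (the convective terms $(\mathbf u\cdot\nabla)\mathbf u$ and $(\mathbf u\cdot\nabla)\rho$ are absent from \eqref{three-proof--eigen-3}), nor can you ``reconstruct $\rho$'' from $\rho=-\lambda^{-1}h(\mathbf u\cdot\nabla f)$ when $\lambda=0$. The direct argument is simpler: $\lambda=0$ in $\eqref{three-proof--eigen-3}_2$ forces $h(\mathbf u\cdot\nabla f)=0$, hence $\mathbf u\cdot\nabla f\equiv 0$ since $h<h_0<0$; then testing $\eqref{three-proof--eigen-3}_1$ with $\mathbf u$ gives $0=-\nu E_1(\mathbf u)$, so $\mathbf u=0$ by \autoref{lemma-grad}.
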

\begin{proof}
The first part is derived from  \autoref{pro-1} and \autoref{proposition-3}. The second part can be established by observing that
\begin{align*}
\lambda\left(\rho^*\norm{\mathbf{u}}_{L^2}^2+
\norm{\frac{\rho}{-h(x,y)}}^2_{L^2}\right)=&-\rho^*\nu
  \int_{\Omega}\abs{\nabla\mathbf{u}}^2\,dx\,dy
   -\red{\frac{ \rho^*\nu }{b}
   \int_{\partial B(0,b)}\left(u_{\mathbf{\tau}}\right)^2 \,ds}\\&
   -\rho^*\nu \int_{\partial B(0,a)}\left(a^{-1}+\frac{\alpha}{\nu}\right)\left(u_{\mathbf{\tau}}\right)^2 \,ds
   <0.
\end{align*}
\end{proof}
Finally, we obtain \autoref{theorem-3} by applying \autoref{linearins}.

\subsection{Proof of \autoref{theorem-1}}
Utilizing Gagliardo-Nirenberg interpolation, we have
\[
\norm{\mathbf{u}}_{W^{1,p}}^p\leq \norm{\mathbf{u}}_{H^1}^{2}
\norm{\mathbf{u}}_{H^2}^{p-2},\quad 2\leq p.
\]
From \autoref{lemma-grad} and \autoref{21336}, and the above inequality, \eqref{theorem-1-conc-1} follows.
 \autoref{utestimate} yields \eqref{theorem-1-conc-2}. The conclusion \eqref{theorem-1-conc-3} can be obtained by \autoref{Stokes0725},
  \autoref{utestimate} and \autoref{21336},
while \eqref{theorem-1-conc-4} is derived from \eqref{rhopp}.
When $(\mathbf{u}_0,\rho_0) \in H^{3}\left(\Omega\right) \times W^{1,q}\left(\Omega\right)$,
\eqref {theorem-1-conc-12} follows from \autoref{lemma-37}.
By Gagliardo-Nirenberg interpolation and \autoref{21336}, we infer that
\[
\begin{cases}
\int_{0}^{T}\norm{\mathbf{u}}^{\frac{2p}{p-2}}_{W^{2,p}}\,dt
\leq C
\norm{\mathbf{u}}_{ L^{\infty}\left((0,\infty);H^{2}(\Omega)\right)}^{\frac{4}{p-2}}
\int_{0}^{T}\norm{\mathbf{u}}^{2}_{H^{3}}\,dt<\infty,\quad 2<p,\\[1mm]
\mathbf{u} \in L^{\infty}\left((0,\infty),H^{2}(\Omega)\right),\quad p=2,
\end{cases}
\]
which gives \eqref {theorem-1-conc-11}.

\subsection{Proof of \autoref{theorem-2}}

\subsubsection{Large time behavior
of $\norm{\mathbf{u}}_{H^{1}}$}

Let us denote 
\[
g(t):=\nu\int_\Omega |\nabla \mathbf{u}|^2\,dx\,dy
+\frac{\nu}{b}\int_{\partial B(0,b)}\left(u_{\mathbf{\tau}}\right)^2 \,ds
+\int_{\partial B(0,a)}
\left(\nu a^{-1}+\alpha\right)\left(u_{\mathbf{\tau}}\right)^2 \,ds.
\]
One can get from \eqref{one-proof-1-n} that
$g(t)\in L^1(0,+\infty)$.
Using the method deriving \eqref{new-asymptotical}, we can get
\begin{align}\label{new-asymptotical-1}
\begin{aligned}
&
\nu\rho^*\frac{1}{2} \frac{d}{dt} \int_\Omega |\nabla \mathbf{u}|^2\,dx\,dy
+\frac{\rho^*\nu}{2b} \frac{d}{dt}
\int_{\partial B(0,b)}\left(u_{\mathbf{\tau}}\right)^2 \,ds\\&
+\frac{\rho^*}{2} \frac{d}{dt}
\int_{\partial B(0,a)}
\left(\nu a^{-1}+\alpha\right)\left(u_{\mathbf{\tau}}\right)^2 \,ds
+ \frac{\rho^*}{2} \|\mathbf{u}_t\|_{L^2}^2 \\ \le  &
C_0
\left(
\norm{\nabla \mathbf{u}}_{L^2}^2
+\norm{\mathbf{u}}_{L^2\left(\partial B(0,b)\right)}^2
\right)^2+C_0\norm{\rho}_{L^2}^2\\
&+C_0\left(
\sqrt{
\norm{\nabla \mathbf{u}}_{L^2}^2
+\norm{\mathbf{u}}_{L^2\left(\partial B(0,b)\right)}^2}
\right)^3<C_0.
\end{aligned}
\end{align}
This shows that $g(t)$ is a uniformly continuous function.  Then, 
\autoref{lemma-grad-2} implies that
\[
\int_\Omega |\nabla \mathbf{u}|^2\,dx\,dy
+\int_{\partial B(0,b)}\left(u_{\mathbf{\tau}}\right)^2 \,ds \to 0\quad \text{as}\quad t\to +\infty.
\]
An application of \autoref{lemma-grad}, we have $
\int_\Omega |\mathbf{u}|^2\,dx\,dy \to 0\quad \text{as}\quad t\to +\infty$.
Collecting these results shows $
\norm{\mathbf{u}}_{H^1} \to 0\quad \text{as}\quad t\to +\infty$. Using Gagliardo-Nirenberg's interpolation , we have
\[
\begin{cases}
\norm{\mathbf{u}}_{W^{1,p}}\leq \norm{\mathbf{u}}_{H^1}^{\frac{2}{p}}
\norm{\mathbf{u}}_{H^2}^{1-\frac{2}{p}},\quad 2<p,\\
\norm{\mathbf{u}}_{W^{1,p}}\leq C\norm{\mathbf{u}}_{H^1}^p,
1\leq p\leq 2.
\end{cases}
\]
Since $\norm{\mathbf{u}}_{H^2}$ is uniformly bounded in time (see \autoref{21336}), we find that $
\norm{\mathbf{u}}_{W^{1,p}}\to 0\quad\text{as}\quad t\to \infty$,
which gives \eqref{theorem-2-conc-1}.

\subsubsection{Large time behavior of $\norm{\mathbf{u}_t}_{L^{2}}$}
First, \autoref{utestimate} shows that $\norm{\mathbf{u}_t}_{L^{2}}^2\in L^1(0,+\infty)$.
Second, from \eqref{ut-proof-1} and \eqref{ut-proof-5} we see that there is a positive constant $C^*$, such that
  \begin{align}\label{ut-proof-5-2}
\begin{aligned}
\abs{\norm{\mathbf{u}_t(t)}^2_{L^2}-
\norm{\mathbf{u}_t(s)}^2_{L^2}}\leq C^*(t-s),
\end{aligned}
\end{align}
which shows that $\norm{\mathbf{u}_t(t)}^2_{L^2}$ is a non-negative, absolutely continuous function in $L^1(0,\infty)$.
Hence, by \autoref{lemma-grad-2} we have $\norm{\mathbf{u}_t(t)}^2_{L^2}\to 0\quad\text{as}\quad t\to \infty$. This implies
\eqref{theorem-2-conc-2}.

\subsubsection{Large time behavior
of $\norm{\nabla p+\rho\nabla f}_{H^{-1}}$}

 For any $\epsilon>0$ and any $ \mathbf{v}\in L^2(\Omega)$, there exists
 $ \tilde{\mathbf{v}}\in H_0^1(\Omega)$ such that $
 \norm{\mathbf{v}-\tilde{\mathbf{v}}}_{L^2}<\epsilon$.
 This infers that for any $\epsilon>0$ and any $ \mathbf{v}\in L^2(\Omega)$, we have
 \[
 \abs{\int_{\Omega}\Delta \mathbf{u}\cdot  \mathbf{v}\,d \mathbf{x}}
 \leq  \norm{\mathbf{v}-\tilde{\mathbf{v}}}_{L^2}
 \norm{\Delta \mathbf{u}}_{L^2}+
 \norm{\tilde{\mathbf{v}}}_{H^1}\norm{\mathbf{u}}_{H^1}<C_0\epsilon
 + \norm{\tilde{\mathbf{v}}}_{H^1}\norm{\mathbf{u}}_{H^1}
 <2C_0\epsilon
 \]
provided $t$ is sufficiently large. This deduces that $
\Delta \mathbf{u}\rightharpoonup 0\quad \text{in}\quad L^2(\Omega)$,
which yields \eqref{theorem-2-conc-3}. Finally, using \eqref{theorem-2-conc-1}-\eqref{theorem-2-conc-3},
for any $ \mathbf{v}\in L^2(\Omega)$
 we have
\begin{align}\label{424-proof-5-2}
\begin{aligned}
\int_{\Omega}\abs{ \mathbf{v}\cdot(\nabla P+\varrho \nabla f)}\,d\mathbf{x} 
\leq &\norm{\sqrt{\varrho +\rho_s}\mathbf{u}_t}_{L^2}
 \norm{\mathbf{v}}_{L^2}
 +C\norm{\nabla \mathbf{u}}_{L^4}\norm{\mathbf{u}}_{L^4}\norm{\mathbf{v}}_{L^2}
 \\&+ \abs{\int_{\Omega}\Delta \mathbf{u}\cdot  \mathbf{v}\,d \mathbf{x}}\to 0\quad \text{as}\quad t\to \infty,
\end{aligned}
\end{align}
where we have used
\[
 \nabla P+\frac {\rho }{\rho^*}\nabla f
= \nu \Delta \mathbf{u}
 -\frac{\partial \mathbf{u}}{\partial t}-\mathbf{u} \cdot \nabla )\mathbf{u}.
 \]
Then, the conclusion \eqref{theorem-2-conc-4} follows from \eqref{424-proof-5-2}.

\subsubsection{Large time behavior
of $\int_{\Omega}\rho f
 \,dx\,dy $}

As $t\to +\infty$, we get from \eqref{one-proof-1-n} that
\begin{align}\label{one-using}
\begin{aligned}
 2\int_{\Omega}\rho f
 \,dx\,dy \to &-2\rho^*\nu\int_0^{+\infty}\left(\int_{\Omega}\abs{\nabla \mathbf{u}(\tau)}^2\,dx\,dy\right)\,d\tau
\\&-\frac{2\rho^*\nu}{ b}
\int_0^{+\infty}\left(
\int_{\partial B(0,b)}\left(u_{\mathbf{\tau}}\right)^2 \,ds
\right)\,d\tau
\\&-2\rho^*\int_0^{+\infty}\left(\int_{\partial B(0,a)}
\left(\nu a^{-1}+\alpha\right)\left(u_{\mathbf{\tau}}\right)^2 \,ds
\right)\,d\tau\\&+
\int_{\Omega}\rho^*\abs{\mathbf{u}_0}^2\,dx\,dy
+ 2\int_{\Omega}\rho_0 f
 \,dx\,dy.
\end{aligned}
\end{align}
This yields \eqref{theorem-22-conc-4-1} and $\eqref{I-11a}_1$.

\subsubsection{Large time behavior
of $\norm{\rho +\rho_s+\gamma f-\beta}_{L^2}^2$}
 
To show \eqref{theorem-22-conc-4} and $\eqref{I-11a}_2-\eqref{I-11a}_3$.
Let us introduce the following new variables
\begin{align*}
\mathbf{v}=\mathbf{u},\quad
\rho=\theta -\rho_s+e(x,y),\quad
P=q-P_s+h(x,y),
\end{align*}
where $e(x,y)$ and $h(x,y)$ are defined by the following equations
\begin{align*}
\begin{aligned}
&e(x,y)=-\gamma f(x,y)+\beta,\quad \gamma>0,\\
&\nabla h=-e(x,y)\nabla {f},\quad \nabla P_s=-\rho_s\nabla f.
\end{aligned}
\end{align*}
Then, it can be observed that $(\mathbf{v},\theta,q)$ satisfies the following system:
   \begin{align}\label{213-1}
\begin{cases}
\frac{\partial \mathbf{v}}{\partial t}+ (\mathbf{v} \cdot \nabla )\mathbf{v} = \nu \Delta \mathbf{v}-
\frac{1}{\rho^*} \nabla q-\frac{\theta}{\rho^*} \nabla f,
 \\ \frac{\partial \theta}{\partial t}+(\mathbf{v}\cdot  \nabla)\theta
 =\gamma (\mathbf{v}\cdot  \nabla) {f},
\\ \nabla \cdot  \mathbf{v}=0,
\end{cases}
\end{align}
with the folloiwng associated boundary conditions
  \begin{align}\label{n-cond-1}
  &\mathbf{v} \cdot   \mathbf{n}|_{x^2+y^2=a^2}=0,\quad
  \mathbf{v} \cdot   \mathbf{n}|_{x^2+y^2=b^2}=\nabla \times   \mathbf{v}|_{x^2+y^2=b^2}=0,\\ \label{n-cond-2}
   &\left[\left(-\frac{q}{\rho^*}\mathbf{I} +\nu\left (\nabla \mathbf{v}+\left(\nabla \mathbf{v}\right)^{Tr} \right)\right)\cdot\mathbf{n}\right]
   \cdot   \mathbf{\tau}|_{x^2+y^2=a^2}
  +\alpha  \mathbf{v}  \cdot   \mathbf{\tau}|_{x^2+y^2=a^2}=0.
  \end{align}
  
By multiplying the first equations in  \eqref{213-1} by $\gamma \mathbf{v}$ and
the second equation in \eqref{213-1} by  $\frac{\theta}{\rho^*}$ in the $L^2$, and then summing the results, we deduce, with the aid of
  \autoref{lemma-grad-1-1}, that
\begin{align}\label{two-proof--1}
\begin{aligned}
&\frac{d}{dt}\int_{\Omega}\left(\gamma \abs{\mathbf{u}}^2
+\frac{(\rho +\rho_s+\gamma f-\beta)^2}{\rho^*}\right)
\,dx\,dy\\&=-2\gamma \nu
 \int_{\Omega}\abs{\nabla\mathbf{u}}^2\,dx\,dy
    - \red{  \frac{2\gamma \nu}{b}\int_{\partial B(0,b)}\left(u_{\mathbf{\tau}}\right)^2 \,ds
} -2\gamma \nu\int_{\partial B(0,a)}\left(a^{-1}+\frac{\alpha}{\nu}\right)\left(u_{\mathbf{\tau}}\right)^2 \,ds.
\end{aligned}
\end{align}
Integrating \eqref{two-proof--1} with respect to time yields
\begin{align}\label{two-proof--2}
\begin{aligned}
&\gamma\rho^* \norm{\mathbf{u}}_{L^2}^2
+
\norm{\rho +\rho_s+\gamma f-\beta}_{L^2}^2
+2\gamma \nu\rho^*\int_0^t\Bigg(
\norm{\nabla\mathbf{v}(t')}_{L^2}^2
+\red{  \frac{1}{b}\int_{\partial B(0,b)}\left(u_{\mathbf{\tau}}\right)^2 \,ds
} 
   \\&+\int_{\partial B(0,a)}\left(a^{-1}
   +\frac{\alpha}{\nu}\right)\left(v_{\mathbf{\tau}}(t')\right)^2 \,ds
   \Bigg)\,dt'=\gamma \rho^*\norm{\mathbf{u}_0}_{L^2}^2
+\norm{\rho_0+\gamma f-\beta}_{L^2}^2.
\end{aligned}
\end{align}
As $t\to +\infty$, we get from \eqref{two-proof--2} that
\begin{align}\label{two-proof--3}
\begin{aligned}
\norm{\rho +\rho_s+\gamma f-\beta}_{L^2}^2\to 
  &-2\gamma \nu\rho^*\int_0^{+\infty}\Bigg(
\norm{\nabla\mathbf{v}(t')}_{L^2}^2
+\red{  \frac{1}{b}\int_{\partial B(0,b)}\left(u_{\mathbf{\tau}}\right)^2 \,ds
} 
   \\&+\int_{\partial B(0,a)}\left(a^{-1}
   +\frac{\alpha}{\nu}\right)\left(v_{\mathbf{\tau}}(t')\right)^2 \,ds
   \Bigg)\,dt'\\
   &+\gamma \rho^*\norm{\mathbf{u}_0}_{L^2}^2
+\norm{\rho_0+\gamma f-\beta}_{L^2}^2.
\end{aligned}
\end{align}
This shows \eqref{theorem-22-conc-4} and $\eqref{I-11a}_2$. Finally,
$\eqref{I-11a}_3$ follows
from \eqref{one-using} and \eqref{two-proof--3}.

\subsection{Proof of \autoref{theorem-3-3}}

We get from \eqref{one-using} that $\int_{\Omega}\rho f
 \,dx\,dy \to 0$ if and only if for any $\gamma>0$, one has
\begin{align*}
\begin{aligned}
&\gamma\int_{\Omega}\rho^*\abs{\mathbf{u}_0}^2\,dx\,dy
+ 2\gamma\int_{\Omega}\rho_0 f
 \,dx\,dy\\&=2\gamma\rho^*\nu\int_0^{+\infty}\left(\int_{\Omega}\abs{\nabla \mathbf{u}(\tau)}^2\,dx\,dy\right)\,d\tau
+\frac{2\gamma\rho^*\nu}{ b}
\int_0^{+\infty}\left(
\int_{\partial B(0,b)}\left(u_{\mathbf{\tau}}\right)^2 \,ds
\right)\,d\tau
\\&\quad+2\gamma\rho^*\int_0^{+\infty}\left(\int_{\partial B(0,a)}
\left(\nu a^{-1}+\alpha\right)\left(u_{\mathbf{\tau}}\right)^2 \,ds
\right)\,d\tau
\\&=\gamma \rho^*\norm{\mathbf{u}_0}_{L^2}^2
+\norm{\rho_0+\gamma f-\beta}_{L^2}^2-\lim_{t\to+\infty}
\norm{\rho +\rho_s+\gamma f-\beta}_{L^2}^2
\end{aligned}
\end{align*}
which is equivalent to 
\begin{align}\label{one-usingsb}
\begin{aligned}
& 2\gamma\int_{\Omega}\rho_0 f
 \,dx\,dy=\norm{\rho_0+\gamma f-\beta}_{L^2}^2-\lim_{t\to+\infty}
\norm{\rho +\rho_s+\gamma f-\beta}_{L^2}^2.
\end{aligned}
\end{align}
This proves the first conclusion of \autoref{theorem-3-3}.

We get from \eqref{two-proof--3} that 
\[
\norm{\rho +\rho_s+\gamma f-\beta}_{L^2}^2 \to 0,
\]
 if and only if there exists $\gamma>0$ and $\beta$ such that
\begin{align}\label{two-proof--3}
\begin{aligned}
& \gamma \rho^*\norm{\mathbf{u}_0}_{L^2}^2
+\norm{\rho_0+\gamma f-\beta}_{L^2}^2
 \\& =2\gamma \nu\rho^*\int_0^{+\infty}\Bigg(
\norm{\nabla\mathbf{v}(t')}_{L^2}^2
+\red{  \frac{1}{b}\int_{\partial B(0,b)}\left(u_{\mathbf{\tau}}\right)^2 \,ds
} 
   \\&\quad+\int_{\partial B(0,a)}\left(a^{-1}
   +\frac{\alpha}{\nu}\right)\left(v_{\mathbf{\tau}}(t')\right)^2 \,ds
   \Bigg)\,dt'\\
   &=\gamma \int_{\Omega}\rho^*\abs{\mathbf{u}_0}^2\,dx\,dy
+ 2\gamma \int_{\Omega}\rho_0 f
 \,dx\,dy-\gamma \lim_{t\to+\infty}2\int_{\Omega}\rho f
 \,dx\,dy.
\end{aligned}
\end{align}
which is equivalent to 
\begin{align}\label{one-usingsb-2}
\begin{aligned}
\norm{\rho_0+\gamma f-\beta}_{L^2}^2= 2\gamma \int_{\Omega}\rho_0 f
 \,dx\,dy-\gamma \lim_{t\to+\infty}2\int_{\Omega}\rho f
 \,dx\,dy.
\end{aligned}
\end{align}
This proves the second conclusion of \autoref{theorem-3-3}.

\subsection{Proof of \autoref{theorem-4}}

Under the conditions of \autoref{theorem-4}, we employ similar arguments to the proof of  \autoref{ll2}-\autoref{21336}
for \eqref{213} to get similar lemmas for the equations \eqref{213-2}. Thus, we make use of Gagliardo-Nirenberg's interpolation
inequality $\norm{\mathbf{u}}_{W^{1,p}}^p\leq \norm{\mathbf{u}}_{H^1}^{2}\norm{\mathbf{u}}_{H^2}^{p-2}$ ($p\geq 2$)
to find that the solutions of equations \eqref{213-2} satisfy
\begin{subequations}
     \begin{align}\label{theorem-6-conc-1}
&\mathbf{u} \in L^{\infty}\left(\left(0,\infty\right);H^2(\Omega)\right)
\cap L^{p}\left(\left(0,\infty\right);W^{1,p}(\Omega)\right),\quad 2\leq p<\infty,\\
\label{theorem-6-conc-2}
&\mathbf{u}_t \in
L^{\infty}\left(\left(0,\infty\right);L^2(\Omega)\right)
\cap L^{2}\left(\left(0,\infty\right);H^{1}(\Omega)\right),\\
\label{theorem-6-conc-3}
&p\in L^{\infty}\left(\left(0,\infty\right);H^1(\Omega)\right),\\
\label{theorem-6-conc-4}
&\rho \in L^{\infty}\left(\left(0,\infty\right);L^2(\Omega)\right),
 \end{align}
   \end{subequations}
and
\begin{subequations}
     \begin{align}\label{theorem-7-conc-1}
&\norm{\mathbf{u} }_{W^{1,s}}\to 0,\quad t\to \infty, \quad 1\leq s<\infty,\\
\label{theorem-7-conc-2}
&\norm{\mathbf{u}_t }_{L^{2}}\to 0,\quad t\to \infty, \\
\label{theorem-7-conc-5}
&\norm{\rho}_{L^{2}} \to C_0,\quad t\to \infty.
 \end{align}
 \end{subequations}
 Hence, \eqref{theorem-4-conc-1} follows from \eqref{theorem-6-conc-1}, and
 \eqref{theorem-4-conc-3} is obtained from \eqref{theorem-6-conc-3}.

 \subsubsection{Estimate of $\norm{\nabla \rho}_{ L^{+\infty}\left((0,\infty);L^{2}(\Omega)\right)}$ for the linear problem}

 In polar coordinates, we see that
$\rho$ and $\nabla \rho =(\partial_r\rho,\partial_{\theta}\rho/r)$
and $\omega$ solve 
\begin{align}\label{example}
\begin{cases}
  \frac{\partial \rho}{\partial t}= -h_0 gu_r,\\
   \frac{\partial \nabla \rho}{\partial t}= -h_0 g\nabla u_r,\\
\frac{\partial \omega}{\partial t}  = \nu \Delta \omega+\frac{g}{r}\partial_{\theta}\rho,\\
\omega=0,\quad r=a, b,\\
\omega|_{t=0}= \omega_0,\\
\rho|_{t=0}= \rho_0,\quad \rho_0|_{r=a,b}=0.
\end{cases}
\end{align}

Multiplying $\eqref{example}_1$
and $\eqref{example}_2$
 by $r^2\frac{\nabla \rho}{-h_0}$ and $r^2\omega$ in $L^2$, integrating by parts, we have
 \begin{align}\label{twn-find}
  \begin{aligned}
  &\frac{d}{dt}
 \int_a^b\left(
 \int_0^{2\pi}
  \left(
 \frac{1}{-2h_0}\left(\partial_r\rho \right)^2+
 \frac{1}{-2h_0r^2} \left(\partial_{\theta}\rho\right)^2
 + \frac{\omega^2}{2}
 \right)
 r^3\,d\theta
 \right)\,dr
 \\&+\nu\int_a^b\left(
 \int_0^{2\pi}
  \left(
\omega \Delta \omega 
 \right)
 r^3\,d\theta
 \right)\,dr\\
& =g \int_a^b\left( \int_0^{2\pi}
  \left(
\partial_{r}u_{r}\partial_r\rho+
 \frac{1}{r^2} \partial_{\theta}u_{r}\partial_{\theta}\rho
 \right)
 r^3\,d\theta
 \right)\,dr\\
 &\quad+g \int_a^b\left( \int_0^{2\pi}
\partial_{\theta}\rho
  \left(
\partial_ru_{\theta}+\frac{u_{\theta}}{r}
-\frac{\partial_{\theta}u_r}{r}
 \right)
 \,d\theta
 \right)r^2\,dr\\
 &=g \int_a^b\left( \int_0^{2\pi}
  \left(
\partial_{r}u_{r}\partial_r\rho+
\frac{\partial_{\theta}\rho}{r}
  \left(
\partial_ru_{\theta}+\frac{u_{\theta}}{r}\right)
 \right)
 r^3\,d\theta
 \right)\,dr
 \\
 &=g \int_a^b\left( \int_0^{2\pi}
  \left(
-\partial_r\rho
\left(
u_r
+\partial_{\theta}u_{\theta}
\right)
+\partial_{\theta}\rho
  \left(
\partial_ru_{\theta}+\frac{u_{\theta}}{r}\right)
 \right)r^2
 \,d\theta
 \right)\,dr\\
 &=g \int_a^b\left( \int_0^{2\pi}
  \left(
-\partial_r\rho
u_r
+\partial_{\theta}\rho
\frac{u_{\theta}}{r}
+\frac{2\rho}{r}\partial_{\theta}u_{\theta}
\right)r^2
 \,d\theta
 \right)\,dr
 \\
 &=g \int_a^b\left( \int_0^{2\pi}
   \left(
-\partial_r\rho
u_r
-\rho
\frac{\partial_{\theta}u_{\theta}}{r}
+\frac{2\rho}{r}\partial_{\theta}u_{\theta}
\right)r^2
 \,d\theta
 \right)\,dr\\
  &=g \int_a^b\left( \int_0^{2\pi}
   \left(
-\partial_r\rho
u_r
+\frac{\rho}{r}\partial_{\theta}u_{\theta}
\right)r^2
 \,d\theta
 \right)\,dr\\
  &=g \int_a^b\left( \int_0^{2\pi}
r\rho u_r
 \,d\theta
 \right)\,dr.
 \end{aligned}
  \end{align}
 Note that
 \[
  \begin{aligned}
& \begin{aligned}
 & \frac{d}{dt}
  \int_a^b\int_0^{2\pi}
  \frac{r\rho^2}{-2h_0}
   \,d\theta\,dr
  =g
    \int_a^b\int_0^{2\pi}r
  \rho u_r \,d\theta\,dr,
  \end{aligned}\\
 & \begin{aligned}
 \nu\int_a^b\left(
 \int_0^{2\pi}
  \left(
\omega \Delta \omega 
 \right)
 r^3\,d\theta
 \right)\,dr
 =-&\nu 
 \int_a^b\left(
 \int_0^{2\pi}
  \left(
\left(\partial_r\omega\right)^2+
\frac{1}{r^2}
\left(\partial_{\theta}\omega \right)^2
 \right)
 r^3\,d\theta
 \right)\,dr
 \\&-2\nu 
 \int_a^b\left(
 \int_0^{2\pi}
\omega \partial_r\omega
 r^2\,d\theta
 \right)\,dr.
   \end{aligned}
     \end{aligned}
 \]
 This, together with \eqref{twn-find}, gives
 \begin{align}\label{newi-find-two}
  \begin{aligned}
  &\frac{1}{2}\frac{d}{dt}
 \int_a^b\left(
 \int_0^{2\pi}
  \left(
 \left(\partial_r\rho \right)^2+
 \frac{1}{r^2} \left(\partial_{\theta}\rho\right)^2
 +\omega^2
 \right)
 r^3\,d\theta
 \right)\,dr
 +\frac{d}{dt}
  \int_a^b\int_0^{2\pi}
  \frac{r\rho^2}{-2h_0}
   \,d\theta\,dr
 \\&+\frac{\nu}{2}
 \int_a^b\left(
 \int_0^{2\pi}
  \left(
\left(\partial_r\omega\right)^2+
\frac{1}{r^2}
\left(\partial_{\theta}\omega \right)^2
 \right)
 r^3\,d\theta
 \right)\,dr
 \leq C
 \int_a^b\left(
 \int_0^{2\pi}
\omega^2
 r\,d\theta
 \right)\,dr.
 \end{aligned}
  \end{align}
  Integrating \eqref{newi-find-two} with respect time, we can get that
 \[
 \norm{\nabla \rho}_{L^2}^2
 \in L^{\infty}((0,\infty);L^2(\Omega)).
 \]

 \subsubsection{Estimate of $\norm{\mathbf{u}_t}_{ L^{2}\left((0,\infty);H^{2}(\Omega)\right)}$ for the linear problem}

 We now derive a bound for $\norm{\mathbf{u}_t}_{L^{2}\left((0,\infty);H^{2}(\Omega)\right)}$.
 For the linear problem \eqref{213-2}, $\omega_t$ solves
    \begin{align}\label{three-proof--4-w-1}
\begin{cases}
\omega_{tt}= \nu \Delta \omega_t-\frac{1}{\rho^*}\nabla
\left( h(x,y)(\mathbf{u}\cdot  \nabla  {gr})\right)
 \cdot \nabla^{\perp}gr ,\\
\omega_t=0,\quad x^2+y^2=a^2,b^2,\\
\omega|_{t=0}= \omega_0.
\end{cases}
\end{align}
 Multiplying the first equation of \eqref{three-proof--4-w-1} by $\omega_t$ in $L^2$, one has
    \begin{align}\label{three-proof--4-w-2}
\begin{aligned}
\frac{1}{2}\frac{d}{dt}\norm{\omega_t}_{L^2}^2=&-
\nu \norm{\nabla \omega_t}
_{L^2}^2-
\frac{1}{\rho^*}
\int_{\Omega}\omega_t \nabla \left( h(x,y)(\mathbf{u}\cdot  \nabla  {gr})\right)
 \cdot \nabla^{\perp}gr \,dx\,dy
 \\&\leq 
 -\nu \norm{\nabla \omega_t}
_{L^2}^2+
 C\norm{\mathbf{u}_t}_{H^1}
 \norm{\mathbf{u}}_{H^1}<+\infty.
\end{aligned}
\end{align}
Integrating the above inequality with respect to time and noting that the right-hand side is uniformly bounded in time due to \eqref{theorem-6-conc-1}–\eqref{theorem-6-conc-2}, we find that
\begin{align}\label{three-proof--4-w-8-1}
\mathbf{u}_t\in L^{2}\left((0,\infty);H^{2}(\Omega)\right).
\end{align}
From
\[
 \norm{\mathbf{u}_t}_{H^1}^2 \leq C
 \norm{\omega_t}_{L^2}^2,
\]
it follows that
\begin{align}\label{three-proof--4-w-8}
\mathbf{u}_t\in L^{\infty}\left((0,\infty);H^{1}(\Omega)\right).
\end{align}
Hence, the properties \eqref{three-proof--4-w-8-1} and \eqref{three-proof--4-w-8} gives \eqref{theorem-4-conc-2}.



 \subsubsection{Large time behavior of $\norm{\mathbf{u}}_{H^{2}}$}
 For the linear problem \eqref{213-2}, $\omega$ solves
   \begin{align}\label{three-proof--3-1}
\begin{cases}
\frac{\partial \omega}{\partial t}  = \nu \Delta \omega+\frac{1}{\rho^*}\nabla\rho \cdot \nabla^{\perp}gr ,\\
\omega=0,\quad x^2+y^2=a^2,b^2,\\
\omega|_{t=0}= \omega_0.
\end{cases}
\end{align}

Multiplying \eqref{three-proof--3-1} by $\omega$ in $L^2$ and integrating by parts, we have
\begin{align}\label{proof-43-2-2-1}
\begin{aligned}
\frac{1}{2}\frac{d}{dt}\norm{\omega}^2_{L^2}=
& -\nu \norm{\nabla \omega}^2_{L^2}
+\frac{1}{\rho^*}\int_{\Omega}\rho
\nabla\omega \cdot \nabla^{\perp}gr\,dx\,dy.
\end{aligned}
\end{align}

Based on \eqref{proof-43-2-2-1} and \autoref{lemma-grad-1}, we have
\begin{align}\label{proof-43-2-2}
\begin{aligned}
\nu \norm{\mathbf{u}}^2_{H^2}&\leq \nu C
\norm{\nabla \omega}^2_{L^2}
+\nu C \norm{\mathbf{u}}^2_{H^1}
\\&=-C
\int_{\Omega}
\omega \omega_t
\,dx\,dy
+\frac{C}{\rho^*}\int_{\Omega}\rho
\nabla\omega \cdot \nabla^{\perp}gr
\,dx\,dy+\nu C \norm{\mathbf{u}}^2_{H^1}\\
\leq &C\left(
 \norm{\mathbf{u}_t}_{H^1}
 + \norm{\rho}_{H^1}
\right) \norm{\mathbf{u}}_{H^1}
+C\nu \norm{\mathbf{u}}_{H^1}^2.
\end{aligned}
\end{align}
Using \eqref{theorem-6-conc-3}, \eqref{three-proof--4-w-8} and the uniform bound on $\norm{\nabla \rho}_{L^2}$, we observe that the terms inside the parentheses on the right‑hand side of \eqref{proof-43-2-2} are uniformly bounded in time. Consequently, \eqref{theorem-3-conc-1} follows from \eqref{theorem-7-conc-1}.
 \subsubsection{Large time behavior
of $\norm{\mathbf{u}_t}_{H^{1}}$}

From \eqref{three-proof--4-w-2}, we get that
\begin{align}\label{utl2h21}
\norm{\omega_t}_{L^2}\to 0~\text{as}~t\to \infty.
\end{align}
Recalling that
\[
\norm{\mathbf{u}_t}^2_{H^1}\leq
 C\norm{\omega_t}^2_{L^2}
+ C \norm{\mathbf{u}_t}^2_{L^2}
\]
we use \eqref{utl2h21} and \eqref{theorem-7-conc-2} to deduce
\[
\norm{\mathbf{u}_t}^2_{H^1} \to 0~\text{as}~t\to \infty ,
\]
which yields \eqref{theorem-3-conc-2}. Finally, the limits \eqref{theorem-3-conc-1}-\eqref{theorem-3-conc-2} 
and the equations \eqref{213-2} yield \eqref{theorem-3-conc-3}.

\section{Appendix}

\begin{appendix}

   \begin{lemma}\label{lemma-grad}
   For $\mathbf{u}\in W^{1,p}(\Omega)$ satisfying $\mathbf{u}\cdot \mathbf{n}|_{\partial \Omega}=0$,  there exists constants $ C_a^b$ and $ D_a^b$ such that
             \begin{align}\label{omega}
           \begin{aligned}
 C_a^b\norm{\mathbf{u}}_{L^p}^p\leq \int_{\Omega}\abs{\nabla \mathbf{u}}^p\,dxdy+\int_{\partial B(0,b)}\abs{u_{\tau}}^p\,ds.
   \end{aligned}
     \end{align}
     and
                 \begin{align}\label{omega}
           \begin{aligned}
 D_a^b\norm{u}_{W^{1,p}}^2\leq
 \int_{\Omega}\abs{\nabla u}^p\,dxdy
 +\int_{\partial B(0,b)}\abs{u_{\tau}}^p\,ds.
   \end{aligned}
     \end{align}
  \end{lemma}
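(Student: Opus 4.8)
The plan is to establish the first inequality by a compactness–contradiction argument and then obtain the second as an immediate corollary. Suppose, for contradiction, that no constant $C_a^b>0$ makes the first inequality hold for all admissible $\mathbf{u}$. Then there exists a sequence $\{\mathbf{u}_n\}\subset W^{1,p}(\Omega)$ with $\mathbf{u}_n\cdot\mathbf{n}|_{\partial\Omega}=0$, normalized so that $\norm{\mathbf{u}_n}_{L^p}^p=1$, while $\int_\Omega\abs{\nabla\mathbf{u}_n}^p\,dx\,dy+\int_{\partial B(0,b)}\abs{u_{n,\tau}}^p\,ds\to 0$. In particular $\{\mathbf{u}_n\}$ is bounded in $W^{1,p}(\Omega)$.

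By reflexivity and the Rellich–Kondrachov theorem (compact on the bounded planar domain $\Omega$ for every $1\le p<\infty$) I would pass to a subsequence with $\mathbf{u}_n\rightharpoonup\mathbf{u}$ in $W^{1,p}(\Omega)$ and $\mathbf{u}_n\to\mathbf{u}$ strongly in $L^p(\Omega)$; the strong convergence preserves the normalization $\norm{\mathbf{u}}_{L^p}^p=1$. Weak lower semicontinuity of $\mathbf{v}\mapsto\int_\Omega\abs{\nabla\mathbf{v}}^p\,dx\,dy$ forces $\int_\Omega\abs{\nabla\mathbf{u}}^p\,dx\,dy=0$, so $\nabla\mathbf{u}=0$ a.e.; since the annulus $\Omega$ is connected, $\mathbf{u}$ is a constant vector $\mathbf{c}$. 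The continuity of the trace operator $W^{1,p}(\Omega)\to L^p(\partial\Omega)$ lets me pass the boundary information to the limit: the condition $\mathbf{u}_n\cdot\mathbf{n}=0$ gives $\mathbf{c}\cdot\mathbf{n}=0$ on $\partial B(0,b)$, while the vanishing of the boundary integral gives $c_\tau=0$ on $\partial B(0,b)$. Hence $\mathbf{c}$ has both vanishing normal and tangential components at every point of the outer circle, which forces $\mathbf{c}=\mathbf{0}$ and contradicts $\norm{\mathbf{c}}_{L^p}^p=1$. This proves the first inequality.

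For the second inequality I would argue directly from the first. Using the equivalence $\norm{\mathbf{u}}_{W^{1,p}}^p\le C\left(\norm{\mathbf{u}}_{L^p}^p+\norm{\nabla\mathbf{u}}_{L^p}^p\right)$, I would control $\norm{\mathbf{u}}_{L^p}^p$ by the right-hand side via the first inequality, and observe that $\int_\Omega\abs{\nabla\mathbf{u}}^p\,dx\,dy\le \int_\Omega\abs{\nabla\mathbf{u}}^p\,dx\,dy+\int_{\partial B(0,b)}\abs{u_\tau}^p\,ds$ trivially. Thus both $\norm{\mathbf{u}}_{L^p}^p$ and $\norm{\nabla\mathbf{u}}_{L^p}^p$ are dominated by the same right-hand side, and collecting constants yields the desired $D_a^b$ (the exponent on the left-hand side of the second estimate is understood to be $p$, matching the common scaling of both sides).

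The bulk of the argument is routine; the point requiring genuine care is the characterization of the kernel. The essential geometric input is that a nonzero constant vector can be neither tangent nor normal to a full circle, so that the combination of the impermeability condition and the outer tangential boundary term eliminates the would-be kernel of the gradient. The two facts I would state explicitly are the passage of both boundary conditions to the weak limit—i.e., continuity of the trace on $\partial B(0,b)$—and the connectedness of $\Omega$, which guarantees that $\nabla\mathbf{u}=0$ yields a single constant vector rather than a piecewise-constant field. Once these are in place, the contradiction is immediate, and no further computation is needed.
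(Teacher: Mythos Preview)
Your proof is correct, and it takes a genuinely different route from the paper's. The paper gives a direct, constructive argument: it partitions the annulus into four subregions by the horizontal lines $y=\pm a$, and on each subregion applies the fundamental theorem of calculus along horizontal segments emanating from the outer circle $\partial B(0,b)$, writing $\mathbf{u}(x,y)=\mathbf{u}(\pm\sqrt{b^2-y^2},y)+\int \partial_x\mathbf{u}\,ds$. Because $\mathbf{u}\cdot\mathbf{n}=0$ on $\partial B(0,b)$, the boundary value has modulus $|u_\tau|$, and H\"older's inequality on the line integral then yields the estimate with an explicit constant depending only on $a,b$.

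Your compactness--contradiction argument is cleaner and isolates the geometric point---constants are the sole obstruction and the outer-boundary data kill them---but it is non-constructive. The paper's hands-on approach is more elementary and produces an explicit $C_a^b$ in terms of $a$ and $b$; that explicitness is not exploited later, but it makes the dependence transparent. One small remark on your write-up: the appeal to reflexivity is superfluous (and would fail at $p=1$). Since $\|\nabla\mathbf{u}_n\|_{L^p}\to 0$ you already have \emph{strong} convergence of the gradients to zero, so Rellich--Kondrachov alone upgrades this to strong $W^{1,p}$ convergence along a subsequence, after which the trace passage and the identification of the limit as a constant are immediate for every $p\ge 1$.
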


\begin{figure}[tbh]
\centering
        \begin{tikzpicture}[>=stealth',xscale=1,yscale=1,every node/.style={scale=0.8}]
\draw [thick,->] (0,-2.5) -- (0,2.5) ;
\draw [thick,->] (-2.5,0) -- (2.5,0) ;
\node [below right] at (3.1,0) {$x$};
\draw [thick,-,blue] (-1.732,1) -- (1.732,1) ;
\draw [thick,-,blue] (-1.732,-1) -- (1.732,-1) ;
\node [below right] at (1,0) {$a$};
\node [below right] at (2,0) {$b$};
\node [above right] at (0,1.3) {$\Omega_1$};
\node [above left] at (-1.3,0) {$\Omega_2$};
\node [above right] at (1.3,0) {$\Omega_3$};
\node [below right] at (0,-1.3) {$\Omega_4$};
\node [left] at (0,3.1) {$y$};
\draw[domain = -2:360,blue][samples = 200] plot({cos(\x)}, {sin(\x)});
\draw[domain = -2:360,blue][samples = 200] plot({2*cos(\x)}, {2*sin(\x)});
\end{tikzpicture}
\caption{$\mathbf{g}=-g\left(\frac{x}{r}, \frac{y}{r}\right)^T$}
\label{domain14}
\end{figure}
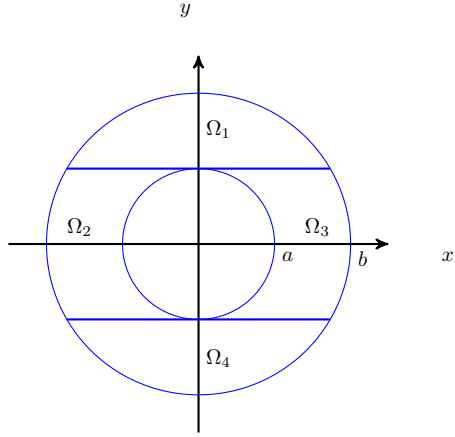

  \begin{proof}
    Let $ \Omega=\cup_{j=1}^4\Omega_j$, shown in \autoref{domain14}.
   For $(x,y)\in \Omega_1\cup  \Omega_4$, we define $l^{j1}_{x,y}$ and $l^{j2}_{x,y}$ to be the two horizontal lines as follows.
   \begin{align}
    \begin{aligned}
   & l^{11}_{x,y}=l^{41}_{x,y}:=\{(s,y)|-\sqrt{b^2-y^2}\leq s\leq x\},\\
   &l^{12}_{x,y}=l^{42}_{x,y}:=\{(s,y)| -\sqrt{b^2-y^2}\leq x\leq \sqrt{b^2-y^2}\},
   \end{aligned}
      \end{align}
      For $(x,y)\in \Omega_2$, we define $l^1_{x,y}$ and $l^2_{x,y}$
    be the two
   horizontal lines as follows
   \begin{align}
   \begin{aligned}
   &l^{21}_{x,y}:=\{(s,y)|-\sqrt{b^2-y^2}\leq s\leq x\},\\
    &l^{22}_{x,y}:=\{(s,y)| -\sqrt{b^2-y^2}\leq x\leq -\sqrt{a^2-y^2}\}.
       \end{aligned}
   \end{align}
     For $(x,y)\in \Omega_3$, we define $l^1_{x,y}$ and $l^2_{x,y}$ to be the two horizontal lines as follows.
  \begin{align}
   \begin{aligned}
   &l^{31}_{x,y}:=\{(s,y)|x\leq s\leq \sqrt{b^2-y^2}\},\\
   & l^{32}_{x,y}:=\{(s,y)| \sqrt{a^2-y^2}\leq s\leq \sqrt{b^2-y^2}\}.
      \end{aligned}
   \end{align}
We take $j=1$ and $j=2$ as an example. The fundamental theorem of calculus yields 
   \begin{align}
 \mathbf{u}(x,y)= \mathbf{u}(-\sqrt{b^2-y^2},y)+
 \int_{l^{11}_{x,y}}\partial_x \mathbf{u}\,ds.
   \end{align}
   and 
\begin{align}
 \mathbf{u}(x,y)= \mathbf{u}(-\sqrt{b^2-y^2},y)+
 \int_{l^{21}_{x,y}}\partial_x \mathbf{u}\,ds.
   \end{align}
  which gives
        \begin{align}\label{omega-14}
           \begin{aligned}
&\int_{\Omega_1}\abs{\mathbf{u}(x,y)}^p\,dxdy
=\int_{a}^{b}
\left(\int_{-\sqrt{b^2-y^2}}^{\sqrt{b^2-y^2}}
\abs{\mathbf{u}(x,y)}^p\,dx\right)\,dy
\\&\leq 2^p\sqrt{b^2-a^2}\int_{\partial B(0,b)}\abs{u_{\tau}}^p\,ds
+ \int_{\Omega_1}\left(
 (2\sqrt{b^2-y^2})^{p-1}
 \int_{l^{12}_{x,y}}\abs{\partial_x \mathbf{u}}^p\,ds\right)\,dxdy
 \\&\leq   2^p\sqrt{b^2-a^2}\int_{\partial B(0,b)}\abs{u_{\tau}}^p\,ds+
 \int_{\Omega_1}\left((2\sqrt{b^2-y^2})^{p-1}
 \int_{l^{12}_{x,y}}\abs{\nabla \mathbf{u}}^p\,ds\right)\,dxdy
\\& \leq  2^p\sqrt{b^2-a^2}\int_{\partial B(0,b)}\abs{u_{\tau}}^p\,ds+2b(2\sqrt{b^2-a^2})^{p-1}
 \int_{\Omega_1}\abs{\nabla \mathbf{u}}^p\,dxdy.
   \end{aligned}
     \end{align}
 and 
         \begin{align}\label{omega-23}
           \begin{aligned}
&\int_{\Omega_2}\abs{\mathbf{u}(x,y)}^p\,dxdy
=\int_{-a}^{a}
\left(\int_{-\sqrt{b^2-y^2}}^{-\sqrt{a^2-y^2}}
\abs{\mathbf{u}(x,y)}^p\,dx\right)\,dy
\\&\leq 2^p\sqrt{b^2-a^2}\int_{\partial B(0,b)}\abs{u_{\tau}}^p\,ds
+ \int_{\Omega_2}\left(
 (\sqrt{b^2-a^2})^{p-1}
 \int_{l^{21}_{x,y}}\abs{\partial_x \mathbf{u}}^p\,ds\right)\,dxdy
 \\&\leq   2^p\sqrt{b^2-a^2}\int_{\partial B(0,b)}\abs{u_{\tau}}^p\,ds+
 \int_{\Omega_2}\left((\sqrt{b^2-a^2})^{p-1}
 \int_{l^{21}_{x,y}}\abs{\nabla \mathbf{u}}^p\,ds\right)\,dxdy
\\& \leq  2^p\sqrt{b^2-a^2}\int_{\partial B(0,b)}\abs{u_{\tau}}^p\,ds+2b(\sqrt{b^2-a^2})^{p-1}
 \int_{\Omega_2}\abs{\nabla \mathbf{u}}^p\,dxdy.
   \end{aligned}
     \end{align}

  Hence, in view of \eqref{omega-14} and \eqref{omega-23}, there exists a positive constant $C_a^b$
  such that
           \begin{align}\label{omega}
           \begin{aligned}
\int_{\Omega}\abs{\mathbf{u}(x,y)}^p\,dxdy\leq
C_a^b\left(
\int_{\partial B(0,b)}\abs{u_{\tau}}^p\,ds+
 \int_{\Omega}\abs{\nabla \mathbf{u}}^p\,dxdy
 \right).
   \end{aligned}
     \end{align}
     \end{proof}   

\begin{lemma}\label{lemma-grad-1-1}   
      For $\mathbf{v}, \mathbf{w}\in H^{2}(\Omega)$ satisfying boundary conditions \eqref{cond-1}-\eqref{cond-2} and $\nabla \cdot \mathbf{v}=0$,  we have
       \begin{align*}
    \begin{aligned}
 -\int_{\Omega}\Delta \mathbf{v}\cdot\mathbf{w}\,dx\,dy
   =& \int_{\Omega}\nabla\mathbf{v}:\nabla\mathbf{w}\,dx\,dy
   +\int_{\partial B(0,a)}\left(a^{-1}+\frac{\alpha}{\nu}\right)v_{\mathbf{\tau}} w_{\mathbf{\tau}}\,ds+\frac{1}{b}\int_{\partial B(0,b)}v_{\mathbf{\tau}} w_{\mathbf{\tau}} \,ds.
   \end{aligned}
  \end{align*}
\end{lemma}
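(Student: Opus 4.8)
The plan is to start from Green's identity for the vector Laplacian and then evaluate the resulting boundary integral separately on the two circles using the boundary conditions \eqref{cond-1}--\eqref{cond-2}. Integrating by parts once gives
\[
-\int_{\Omega}\Delta\mathbf{v}\cdot\mathbf{w}\,dx\,dy
=\int_{\Omega}\nabla\mathbf{v}:\nabla\mathbf{w}\,dx\,dy
-\int_{\partial\Omega}\left(\frac{\partial\mathbf{v}}{\partial\mathbf{n}}\right)\cdot\mathbf{w}\,ds,
\]
so the entire content of the lemma lies in identifying the boundary term. Since $\mathbf{w}$ satisfies \eqref{cond-1}, we have $\mathbf{w}\cdot\mathbf{n}=0$ on $\partial\Omega$, hence on each boundary circle $\mathbf{w}=w_{\mathbf{\tau}}\mathbf{\tau}$ and the boundary integrand reduces to $\left(\partial_{\mathbf{n}}\mathbf{v}\cdot\mathbf{\tau}\right)w_{\mathbf{\tau}}$. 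It then remains to show that $\partial_{\mathbf{n}}\mathbf{v}\cdot\mathbf{\tau}=-b^{-1}v_{\mathbf{\tau}}$ on $x^2+y^2=b^2$ and $\partial_{\mathbf{n}}\mathbf{v}\cdot\mathbf{\tau}=-\left(a^{-1}+\alpha/\nu\right)v_{\mathbf{\tau}}$ on $x^2+y^2=a^2$; substituting these and flipping the sign in front of the boundary integral reproduces exactly the two boundary terms in the statement.

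To carry out the two computations I would pass to polar coordinates $(r,\theta)$ with the orthonormal frame $\mathbf{e}_r=(\cos\theta,\sin\theta)$, $\mathbf{e}_{\theta}=(-\sin\theta,\cos\theta)$ and write $\mathbf{v}=v_r\mathbf{e}_r+v_{\theta}\mathbf{e}_{\theta}$. Since $\mathbf{e}_r,\mathbf{e}_{\theta}$ are $r$-independent, $\partial_r\mathbf{v}=(\partial_r v_r)\mathbf{e}_r+(\partial_r v_{\theta})\mathbf{e}_{\theta}$, so on the outer circle, where $\mathbf{n}=\mathbf{e}_r$ and $\mathbf{\tau}=\mathbf{e}_{\theta}$, one has $\partial_{\mathbf{n}}\mathbf{v}\cdot\mathbf{\tau}=\partial_r v_{\theta}$ and $v_{\mathbf{\tau}}=v_{\theta}$. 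The key simplification is that $\mathbf{v}\cdot\mathbf{n}=0$ forces $v_r\equiv0$ along the whole circle, so the tangential derivative $\partial_{\theta}v_r$ vanishes there; feeding this into the polar vorticity formula $\nabla\times\mathbf{v}=\partial_r v_{\theta}+r^{-1}v_{\theta}-r^{-1}\partial_{\theta}v_r$ and invoking $\nabla\times\mathbf{v}=0$ on $r=b$ gives $\partial_r v_{\theta}=-b^{-1}v_{\theta}$, the desired outer relation.

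On the inner circle $\mathbf{n}=-\mathbf{e}_r$ and $\mathbf{\tau}=-\mathbf{e}_{\theta}$, so again $\partial_{\mathbf{n}}\mathbf{v}\cdot\mathbf{\tau}=\partial_r v_{\theta}$ while now $v_{\mathbf{\tau}}=-v_{\theta}$. Here I would use the Navier-slip condition \eqref{cond-2}: because $\mathbf{n}\cdot\mathbf{\tau}=0$, the pressure term $-\rho^{*-1}p\,\mathbf{n}\cdot\mathbf{\tau}$ drops out, leaving $2\nu\,\mathbf{\tau}^{\mathrm{Tr}}\mathbb{D}(\mathbf{v})\mathbf{n}+\alpha v_{\mathbf{\tau}}=0$ with $\mathbb{D}(\mathbf{v})=\tfrac12(\nabla\mathbf{v}+(\nabla\mathbf{v})^{\mathrm{Tr}})$. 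Using the polar expression $2D_{r\theta}=\partial_r v_{\theta}-r^{-1}v_{\theta}+r^{-1}\partial_{\theta}v_r$ together with $v_r\equiv0$ (hence $\partial_{\theta}v_r=0$) on $r=a$, and $\mathbf{\tau}^{\mathrm{Tr}}\mathbb{D}\mathbf{n}=\mathbf{e}_{\theta}^{\mathrm{Tr}}\mathbb{D}\mathbf{e}_r=D_{r\theta}$, the condition becomes $\nu(\partial_r v_{\theta}-a^{-1}v_{\theta})-\alpha v_{\theta}=0$, i.e. $\partial_r v_{\theta}=(a^{-1}+\alpha/\nu)v_{\theta}=-(a^{-1}+\alpha/\nu)v_{\mathbf{\tau}}$, the inner relation.

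The only delicate points — and hence where I would be most careful — are the bookkeeping of the curvature terms $\pm r^{-1}v_{\theta}$ that distinguish $\partial_r v_{\theta}$ from the vorticity and from the off-diagonal strain, and the sign conventions for $(\mathbf{n},\mathbf{\tau})$ on the inner circle, where both flip relative to $(\mathbf{e}_r,\mathbf{e}_{\theta})$. The repeated use of $v_r\equiv0$ (legitimate because $\mathbf{v}\cdot\mathbf{n}=0$ holds along the entire circle, not merely pointwise) is precisely what makes every $\partial_{\theta}v_r$ term disappear and is essential for both computations to close. No genuine analytic obstacle arises beyond these frame and sign manipulations, since $\mathbf{v},\mathbf{w}\in H^{2}(\Omega)$ renders all the integrations by parts and trace evaluations valid.
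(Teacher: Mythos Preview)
Your proposal is correct and follows essentially the same strategy as the paper: integrate by parts once, kill the normal component of $\mathbf{w}$ on $\partial\Omega$, and identify $\partial_{\mathbf{n}}\mathbf{v}\cdot\mathbf{\tau}$ on each circle via the boundary conditions. The only cosmetic difference is that the paper works in index notation and splits the boundary integrand as $\tau_j n_i\partial_i v_j=\tau_j n_i(\partial_i v_j+\partial_j v_i)-\tau_j n_i\partial_j v_i$, handling the symmetric strain piece with the Navier/stress-free data and the curvature piece via $\mathbf{n}\cdot(\mathbf{\tau}\cdot\nabla)\mathbf{\tau}=\pm r^{-1}$, whereas you go directly through the polar formulas for $\omega$ and $D_{r\theta}$; the algebra and the ingredients are identical.
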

 \begin{proof}
 Let us denote $(\partial_1,\partial_2):=(\partial_x,\partial_y)$.
 Taking integration by parts and projecting $\mathbf{v}$ onto its
tangential and normal part on the boundary, i.e.,
\begin{align*}
&\mathbf{v}=v_{\mathbf{n}}\mathbf{n}+
v_{\tau}\mathbf{\tau},\quad 
v_{\mathbf{n}}=\mathbf{v}\cdot\mathbf{n},\quad
v_{\tau}=\mathbf{v}\cdot\mathbf{\tau},
\\ &\mathbf{w}=w_{\mathbf{n}}\mathbf{n}+
w_{\tau}\mathbf{\tau},\quad 
w_{\mathbf{n}}=\mathbf{w}\cdot\mathbf{n},\quad
w_{\tau}=\mathbf{w}\cdot\mathbf{\tau},
\end{align*}
we have
 \begin{align}
  \begin{aligned}
  -\int_{\Omega}\Delta \mathbf{v}\cdot\mathbf{w}\,dx\,dy&=
  -\int_{\partial\Omega}w_j n_i\partial_iv_j\,dx\,dy
  +\int_{\Omega}\nabla\mathbf{v}:\nabla\mathbf{w}\,dx\,dy\\
 & =-\int_{\partial\Omega}w_k\tau_k\tau_jn_i\partial_iv_j\,ds-
  \int_{\partial\Omega}w_kn_kn_jn_i\partial_iv_j\,ds
  +\int_{\Omega}\nabla\mathbf{v}:\nabla\mathbf{w}\,dx\,dy\\
   &=-\int_{\partial\Omega}w_k\tau_k\tau_jn_i\left(\partial_iv_j
   +\partial_jv_i\right)\,ds+\int_{\partial\Omega}w_k\tau_k\tau_jn_i\partial_jv_i\,ds\\
  &\quad +\int_{\Omega}\nabla\mathbf{v}:\nabla\mathbf{w}\,dx\,dy\\
  &=\int_{\Omega}\nabla\mathbf{v}:\nabla\mathbf{w}\,dx\,dy
   +\int_{\partial B(0,a)}\left(a^{-1}+\frac{\alpha}{\nu}\right)w_{\mathbf{\tau}}v_{\mathbf{\tau}} \,ds\\
     &\quad+\frac{1}{b}\int_{\partial B(0,b)}w_{\mathbf{\tau}}v_{\mathbf{\tau}} \,ds
 \end{aligned}
  \end{align}
  where we have used $\mathbf{v}\cdot\mathbf{n}|_{\partial\Omega}=0$, $\nu\left(\nabla\mathbf{v}+\left(\nabla\mathbf{v}\right)^{\rm{T_{r}}}\right)\cdot\mathbf{n}\cdot\mathbf{\tau}|_{x^2+y^2=a^2}=-\alpha\mathbf{v}\cdot\mathbf{\tau}|_{x^2+y^2=a^2}$, $\nabla \times   \mathbf{v}|_{x^2+y^2=b^2}=0$,
  $\mathbf{w}\cdot\mathbf{n}|_{\partial\Omega}=0$, $\nabla \times   \mathbf{w}|_{x^2+y^2=b^2}=0$
   and
 \begin{align}\label{oooo}
  \begin{aligned}
 & \int_{\partial B(0,a)}w_k\tau_k\tau_jn_i\left(\partial_iv_j
   +\partial_jv_i\right)\,ds   =-
  \frac{\alpha}{\nu} \int_{\partial B(0,a)}v_{\mathbf{\tau}}w_{\mathbf{\tau}}  \,ds\\
  &  \int_{\partial B(0,b)}w_k\tau_k\tau_jn_i\left(\partial_iv_j
   +\partial_jv_i\right)\,ds   =-
  \frac{2}{b} \int_{\partial B(0,b)}v_{\mathbf{\tau}}w_{\mathbf{\tau}} \,ds,\\
  &\int_{\partial B(0,a)}w_k\tau_k\tau_jn_i\partial_jv_i\,ds
  =\frac{1}{a}
   \int_{\partial B(0,a)}v_{\mathbf{\tau}}w_{\mathbf{\tau}} \,ds,\\
    &\int_{\partial B(0,b)}w_k\tau_k\tau_jn_i\partial_jv_i\,ds
  =-\frac{1}{b}
   \int_{\partial B(0,b)}v_{\mathbf{\tau}}w_{\mathbf{\tau}} \,ds.
 \end{aligned}
  \end{align}
In the process deriving the \eqref{oooo}, we have utilized
  \[
\tau_jn_i\partial_jv_i|_{\partial B(0,a)}=
 \mathbf{n}\cdot(\mathbf{\tau}\cdot\nabla) (v_{\mathbf{\tau}}\mathbf{\tau})
|_{\partial B(0,a)}
=\left(\mathbf{n}\cdot(\mathbf{\tau}\cdot\nabla) \mathbf{\tau}
|_{\partial B(0,a)}\right)
v_{\mathbf{\tau}}=\frac{1}{a}v_{\mathbf{\tau}},
  \]
  and
      \[
\tau_jn_i\partial_jv_i|_{\partial B(0,b)}=
 \mathbf{n}\cdot(\mathbf{\tau}\cdot\nabla) (v_{\mathbf{\tau}}\mathbf{\tau})
|_{\partial B(0,b)}
=\left(\mathbf{n}\cdot(\mathbf{\tau}\cdot\nabla) \mathbf{\tau}
|_{\partial B(0,b)}\right)
v_{\mathbf{\tau}}=-\frac{1}{b}v_{\mathbf{\tau}}.
  \]
  
 \end{proof}
      \begin{lemma}\label{lemma-grad-1}
   For $\mathbf{u}\in W^{k,p}(\Omega)$ satisfying
   $\mathbf{u} \cdot   \mathbf{n}|_{x^2+y^2=a^2, b^2}=0$
   and $\nabla \cdot \mathbf{u}=0$, we have
            \begin{align}
           \begin{aligned}
\norm{\nabla \mathbf{u}}_{W^{k,p}}\leq C\norm{\omega}_{W^{k,p}}
,\quad 1<p<\infty.
   \end{aligned}
     \end{align}
  \end{lemma}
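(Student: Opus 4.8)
The plan is to reduce the estimate to elliptic regularity for the scalar Laplacian by passing to a stream function. Since $\Omega$ is the annulus $\{a\le r\le b\}$ and $\mathbf{u}$ is divergence-free with $\mathbf{u}\cdot\mathbf{n}=0$ on both bounding circles, I would first introduce a stream function $\psi$ with $\mathbf{u}=\nabla^{\perp}\psi=(-\partial_y\psi,\partial_x\psi)$. The boundary condition $\mathbf{u}\cdot\mathbf{n}=\partial_{\tau}\psi=0$ on each component of $\partial\Omega$ forces $\psi$ to be constant there; after normalizing I may take $\psi=0$ on $\{r=b\}$ and $\psi=c$ on $\{r=a\}$ for a single real constant $c$. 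A direct computation gives $\omega=\nabla^{\perp}\cdot\mathbf{u}=\Delta\psi$, and every entry of $\nabla\mathbf{u}$ is a second-order derivative of $\psi$, so $\norm{\nabla\mathbf{u}}_{W^{k,p}}$ is comparable to $\norm{\nabla^{2}\psi}_{W^{k,p}}$. Thus the lemma is equivalent to a Calder\'on--Zygmund bound $\norm{\nabla^{2}\psi}_{W^{k,p}}\le C\norm{\Delta\psi}_{W^{k,p}}$ on the annulus.

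Next I would split off the topological part of $\psi$. Let $h(r)=\ln(r/b)/\ln(a/b)$ be the radial harmonic function with $h|_{r=b}=0$, $h|_{r=a}=1$; since $a>0$ it is smooth on $\overline{\Omega}$. Writing $\psi=\psi_{0}+c\,h$, the function $\psi_{0}$ solves the Dirichlet problem $\Delta\psi_{0}=\omega$ in $\Omega$, $\psi_{0}|_{\partial\Omega}=0$. The annulus is a smooth bounded domain, so the standard Agmon--Douglis--Nirenberg elliptic estimates for the Dirichlet Laplacian give $\norm{\psi_{0}}_{W^{k+2,p}}\le C\norm{\omega}_{W^{k,p}}$ for every $1<p<\infty$, hence $\norm{\nabla^{2}\psi_{0}}_{W^{k,p}}\le C\norm{\omega}_{W^{k,p}}$. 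Since $\nabla^{2}\psi=\nabla^{2}\psi_{0}+c\,\nabla^{2}h$, this yields $\norm{\nabla\mathbf{u}}_{W^{k,p}}\le C\norm{\omega}_{W^{k,p}}+|c|\,\norm{\nabla^{2}h}_{W^{k,p}}$, and it remains only to control the single constant $c$.

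The control of $c$ is the main obstacle, and it is genuinely topological: the circulation field $\nabla^{\perp}h=\tfrac{1}{r\ln(a/b)}\,\boldsymbol{\tau}$ is divergence-free, tangent to both circles, and satisfies $\omega=\Delta h=0$, so it lies in the kernel of the map $\mathbf{u}\mapsto\omega$ while having $\nabla\mathbf{u}\neq0$. Because the space of such harmonic fields on the annulus is one-dimensional (spanned by $\nabla^{\perp}h$), the Hodge decomposition shows that $\nabla^{\perp}\psi_{0}$ and $\nabla^{\perp}h$ are $L^{2}$-orthogonal (indeed $\int_\Omega\nabla\psi_0\cdot\nabla h=\int_{\partial\Omega}\psi_0\,\partial_n h-\int_\Omega\psi_0\,\Delta h=0$), so $c\,\nabla^{\perp}h$ is exactly the projection of $\mathbf{u}$ onto this line and $|c|\le C\norm{\mathbf{u}}_{L^{2}}$, a lower-order quantity relative to $\nabla\mathbf{u}$. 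I would therefore either (i) retain this lower-order term, obtaining $\norm{\nabla\mathbf{u}}_{W^{k,p}}\le C\big(\norm{\omega}_{W^{k,p}}+\norm{\mathbf{u}}_{L^{2}}\big)$, which is the form actually needed in the applications (compare the $\norm{\mathbf{u}}_{H^{1}}$ term appearing in \eqref{proof-43-2-2}), or (ii) restrict to the zero-circulation class $\oint_{r=a}\mathbf{u}\cdot\boldsymbol{\tau}\,ds=0$, in which case $c=0$ and the clean estimate $\norm{\nabla\mathbf{u}}_{W^{k,p}}\le C\norm{\omega}_{W^{k,p}}$ holds verbatim.
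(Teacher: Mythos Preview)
Your approach is the same as the paper's: introduce a stream function $\psi$ with $\mathbf{u}=\nabla^{\perp}\psi$, subtract off a radial harmonic $c\ln r$ to reduce to a homogeneous Dirichlet problem $\Delta\psi_{0}=\omega$, $\psi_{0}|_{\partial\Omega}=0$, and invoke $W^{k+2,p}$ elliptic regularity. The paper does exactly this substitution (its ``Replacing $\psi$ by $\psi+\text{const.}+\text{const.}\cdot\ln r$'' step) and then writes $\norm{\nabla\mathbf{u}}_{W^{k,p}}\le\norm{\psi}_{W^{k+2,p}}\le C\norm{\omega}_{W^{k,p}}$ directly.

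You have, in fact, been more careful than the paper on one point: the subtraction of $c\ln r$ changes $\nabla^{\perp}\psi$ by the nonzero circulation field $c\,\nabla^{\perp}\ln r$, so after the substitution one is no longer estimating the original $\mathbf{u}$. The paper does not comment on this. Your diagnosis is correct: the harmonic tangential field $\nabla^{\perp}\ln r$ lies in the kernel of $\mathbf{u}\mapsto\omega$, so the estimate as stated cannot hold without either a zero-circulation hypothesis or a lower-order remainder. Your option (i) is precisely what the paper actually uses in practice; see \eqref{proof-43-2-2}, where the bound appears as $\norm{\mathbf{u}}_{H^2}^2\le C\norm{\nabla\omega}_{L^2}^2+C\norm{\mathbf{u}}_{H^1}^2$. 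So your proposal both matches the paper's argument and fills the small gap it leaves.
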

   \begin{proof}
   Letting $u_1=-\partial_y\psi$, $u_2=\partial_x\psi$, then we have
               \begin{align}
           \begin{cases}
\Delta \psi=\omega, \quad (x,y)\in \Omega,\\
 \psi=\beta_b,\quad \quad x^2+y^2=b^2,\\
  \psi=\beta_a,\quad \quad x^2+y^2=a^2,
   \end{cases}
     \end{align}
     where $\beta_a$ and $\beta_b$ are two constants. Replacing $\psi$ by
     \[
     \psi+\beta_b- \frac{(\beta_b-\beta_a)\ln b}{\ln\left(\frac{b}{a}\right)}
     +
     \frac{\beta_b-\beta_a}{\ln \left(\frac{b}{a}\right)}\ln r ,
     \]
    we get that
               \begin{align}
           \begin{cases}
\Delta \psi=\omega, \quad (x,y)\in \Omega,\\
 \psi=0,\quad \quad x^2+y^2=b^2,\\
  \psi=0,\quad \quad x^2+y^2=a^2.
   \end{cases}
     \end{align}
     By the elliptic regularity theory, we conclude
                 \begin{align}
           \begin{aligned}
\norm{\nabla \mathbf{u}}_{W^{k,p}}\leq \norm{\psi}_{W^{k+2,p}} \leq C\norm{\omega}_{W^{k,p}}.
   \end{aligned}
     \end{align}
   \end{proof}
   
 \begin{lemma}[\cite{Charles-R2018}]
 \label{lemma-grad-2}
  Assume $g(t)\in L^1(0,\infty)$ is a non-negative and uniformly continuous function. Then,
\[
g(t)\to 0\quad \text{as}\quad t\to \infty.
\]
In particular, if $g(t)\in L^1(0,\infty)$ is a non-negative and satisfies that for any $0\leq s<t<\infty$, such that
$\abs{g(t)-g(s)}\leq C(t-s)$ for some constant $C$, then
\[
g(t)\to 0\quad \text{as}\quad t\to \infty.
\]
  \end{lemma}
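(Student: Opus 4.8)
The plan is to argue by contradiction, exploiting uniform continuity to convert a failure of decay into a failure of integrability. First I would suppose the conclusion fails: then there exist a number $\epsilon>0$ and a sequence $t_n\to\infty$ with $g(t_n)\geq\epsilon$ for all $n$. By the assumed uniform continuity, I would choose $\delta>0$ so that $\abs{g(t)-g(s)}<\epsilon/2$ whenever $\abs{t-s}\leq\delta$; consequently $g(t)\geq\epsilon/2$ for every $t$ in the interval $[t_n,t_n+\delta]$.

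Next I would pass to a subsequence (still denoted $t_n$) for which the intervals $[t_n,t_n+\delta]$ are pairwise disjoint; this is possible because $t_n\to\infty$, so one may inductively discard indices to enforce $t_{n+1}>t_n+\delta$. Using the non-negativity of $g$ together with the disjointness of these intervals, I would estimate
\[
\int_0^{\infty} g(t)\,dt\geq\sum_{n=1}^{\infty}\int_{t_n}^{t_n+\delta}g(t)\,dt\geq\sum_{n=1}^{\infty}\frac{\epsilon\delta}{2}=+\infty,
\]
which contradicts the hypothesis $g\in L^1(0,\infty)$. Hence $g(t)\to0$ as $t\to\infty$, which establishes the first assertion.

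For the second assertion, I would observe that the stated bound $\abs{g(t)-g(s)}\leq C(t-s)$ for all $0\leq s<t<\infty$ is exactly a Lipschitz estimate, which immediately yields uniform continuity: given any $\eta>0$, the choice $\delta=\eta/C$ guarantees $\abs{g(t)-g(s)}<\eta$ whenever $\abs{t-s}<\delta$. Thus the second statement reduces to the first, and the same conclusion $g(t)\to0$ follows at once.

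The argument is entirely elementary, and I do not expect any substantive obstacle. The only step demanding a little care is the extraction of a subsequence with pairwise disjoint intervals $[t_n,t_n+\delta]$, since the lower bound on $\int_0^\infty g$ must be a genuine sum over non-overlapping regions; both the uniform-continuity hypothesis in the first part and the Lipschitz hypothesis in the second are strong enough to drive the contradiction through directly.
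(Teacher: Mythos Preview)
Your argument is correct and is the standard contradiction proof of this classical fact (often called Barbalat's lemma). The paper does not actually supply its own proof of this lemma; it merely cites the result from \cite{Charles-R2018}, so there is nothing to compare against beyond noting that your approach is the expected one.
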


 \begin{lemma}[\cite{Charles-R2018}] \label{lemma-grad-3}
 Assume $X(t)$, $Y(t)$, $A(t)$ and $B(t)$ are non-negative functions, satisfying
 \[
 X'+Y\leq AX+BX\ln (1+Y).
 \]
 Let $T>0$, and $A(t)\in L^1(0,T)$ and $B(t)\in L^2(0,T)$. Then, for any $t\in [0,T]$ we have
 \[
 X(t)\leq (1+X(0)^{e^{\int_0^tB(\tau)\,d\tau}}
 e^{
 \int_0^t e^{\int_s^tB(\tau)\,d\tau}(A(s)+B^2(s))\,ds}.
 \]
 and
 \[
 \int_0^t Y(\tau)\,d\tau \leq X(t) \int_0^t A(\tau)\,d\tau
 +X^2(t) \int_0^t B^2(\tau)\,d\tau <\infty .
 \]
  \end{lemma}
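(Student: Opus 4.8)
This is a logarithmic Gronwall inequality, and the plan is to remove the nonlinear coupling $BX\ln(1+Y)$ by passing to the variable $Z:=\ln(1+X)$. Since every quantity is non-negative and $0\le \tfrac{X}{1+X}\le 1$, I would first rewrite the hypothesis as $X'\le AX+BX\ln(1+Y)-Y$ and divide by $1+X$ to obtain
\[
Z'=\frac{X'}{1+X}\le A+B\ln(1+Y)-\frac{Y}{1+X}.
\]
The decisive step is to split $\ln(1+Y)=Z+\ln\tfrac{1+Y}{1+X}$ and invoke the elementary bound
\[
B\ln\frac{1+Y}{1+X}\le \frac{Y}{1+X}+B^2,\qquad X,Y\ge0,\ B\ge0,
\]
which I would prove by setting $u=\tfrac{1+Y}{1+X}$ and $v=\tfrac{1}{1+X}\in(0,1]$, so that $\tfrac{Y}{1+X}=u-v$ with $v\le\min(1,u)$, and then checking $B\ln u-u+v\le B^2$ via the one-variable maximization $\max_{u>0}(B\ln u-u)=B\ln B-B$ together with the two cases $u\ge 1$ (use $v\le 1$) and $u<1$ (use $v\le u$). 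Together these reduce the problem to the linear differential inequality $Z'\le BZ+A+B^2$.

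Applying the classical Gronwall inequality to $Z'\le BZ+A+B^2$ then gives
\[
Z(t)\le Z(0)\,e^{\int_0^tB}+\int_0^t e^{\int_s^tB}\bigl(A(s)+B^2(s)\bigr)\,ds,
\]
and exponentiating through $1+X=e^{Z}$ and $e^{Z(0)}=1+X(0)$ reproduces the asserted estimate
\[
X(t)\le(1+X(0))^{e^{\int_0^tB}}\exp\Bigl(\int_0^t e^{\int_s^tB}\bigl(A(s)+B^2(s)\bigr)\,ds\Bigr).
\]
On $[0,T]$ the right-hand side is finite because $B\in L^2(0,T)\subset L^1(0,T)$ keeps the exponential weights bounded while $A\in L^1$ and $B^2\in L^1$ control the inner integral; in particular $X$ is uniformly bounded on $[0,T]$, a fact I will reuse below.

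For the time-integral bound on $Y$ I would instead tame the logarithm pointwise by $\ln(1+Y)\le\sqrt{Y}$ — valid since $g(Y)=\sqrt Y-\ln(1+Y)$ has $g(0)=0$ and $g'(Y)=\tfrac{1}{2\sqrt Y}-\tfrac{1}{1+Y}\ge0$ on account of $(\sqrt Y-1)^2\ge0$ — followed by Young's inequality $BX\sqrt Y\le\tfrac12 Y+\tfrac12 B^2X^2$. This turns the hypothesis into $X'+\tfrac12Y\le AX+\tfrac12 B^2X^2$; integrating over $[0,t]$, using $X(t)\ge0$ to discard the $-X(t)$ term, and bounding $X(s)$ by its uniform bound $X^{*}=\sup_{[0,t]}X$ from the first part then yields $\int_0^t Y$ controlled by a constant multiple of $X^{*}\int_0^tA+(X^{*})^2\int_0^tB^2$, which is exactly the stated structure and is finite by the first part together with $A\in L^1$ and $B^2\in L^1$.

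The only genuine obstacle is the multiplicative coupling $BX\ln(1+Y)$, which blocks a direct Gronwall estimate; everything else is bookkeeping. It is resolved by two complementary elementary inequalities: the convexity estimate $B\ln\tfrac{1+Y}{1+X}\le\tfrac{Y}{1+X}+B^2$, which converts the coupling into a term linear in $Z$ after the substitution $Z=\ln(1+X)$, and $\ln(1+Y)\le\sqrt Y$ combined with Young's inequality, which instead converts it into a term absorbable by the $Y$ already present on the left-hand side. I expect that verifying these two scalar inequalities will be the most delicate point, but both follow from single-variable calculus, after which the two conclusions are immediate consequences of linear Gronwall and of integrating the hypothesis in time.
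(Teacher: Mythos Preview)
The paper does not supply a proof of this lemma; it is quoted verbatim from \cite{Charles-R2018}. Your argument is correct and is the standard one: the substitution $Z=\ln(1+X)$ together with the scalar bound $B\ln\frac{1+Y}{1+X}\le \frac{Y}{1+X}+B^2$ reduces the first claim to a linear Gronwall inequality for $Z$, and the pointwise estimate $\ln(1+Y)\le\sqrt{Y}$ plus Young's inequality handles the second. Your verification of the scalar inequality via the case split $u\ge1$ / $u<1$ and the maximization $\max_{u>0}(B\ln u-u)=B\ln B-B$ is complete; the only remaining check, $B\ln B-B+1\le B^2$ for $B\ge1$, follows since $h(B)=B^2-B\ln B+B-1$ has $h(1)=1$ and $h'(B)=2B-\ln B>0$.

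One minor remark: your integration in the second part produces a bound of the form $\int_0^t Y\le 2X(0)+2X^{*}\int_0^t A+(X^{*})^2\int_0^t B^2$ with $X^{*}=\sup_{[0,t]}X$, rather than the literal statement with $X(t)$. This is in fact the correct version---the paper's display with $X(t)$ in place of $\sup_{[0,t]}X$ cannot hold pointwise (take $X(t)=0$ with $Y$ positive earlier), and in the application to Lemma~\ref{lemma-37} only the finiteness is used, for which your bound suffices.
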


\end{appendix}

 \section*{Acknowledgement}

Q. Wang was supported by the Natural Science Foundation
of Sichuan Province (No.2025ZNSFSC0072).

    \section*{Conflict Of Interest Statement}
We declare that we have no financial and personal relationships with
other people or organizations that can inappropriately influence our
work, there is no professional or other personal interest of any nature
or kind in any product, service and/or company that could be
construed as influencing the position presented in, or the review of,
the manuscript.

    \section*{Data Availability Statement}
This manuscript has no associated data.

 \itemsep=0pt

\end{document}